\newcommand{\subsethook}{\mbox{$\,\subset \!\!\!\mbox{\lower
0.58ex\hbox{$\rightarrow$}}\,$}} %Also \hookrightarrow
\newtheorem{theorem}            {Theorem}
\newtheorem{lemma}[theorem]{Lemma}
\newtheorem{definition}[theorem]{Definition}
\newtheorem{corollary} [theorem]{Corollary}
\newtheorem{assumption}[theorem]{Assumption}
\newtheorem{example}[theorem]{Example}
\newtheorem{remark}[theorem] {Remark}
\newcommand {\proof} {\par{\it Proof}. \ignorespaces}
\newcommand {\eproof}
      {\space
        {\ \vbox{\hrule\hbox{\vrule height1.3ex\hskip0.8ex\vrule}\hrule}}
        \par}
\title{Abstract dissipative Hamiltonian differential-algebraic equations are everywhere}
\author{Volker Mehrmann\footnotemark[1] \,  and  Hans Zwart \footnotemark[2]
}
\begin{document}
\maketitle

\begin{abstract} In this paper we study the representation of partial differential equations (PDEs) as abstract differential-algebraic equations (DAEs) with dissipative Hamiltonian structure (adHDAEs). We show that these systems not only arise when there are constraints coming from the underlying physics, but many standard PDE models can be seen as an adHDAE  on an extended state space. This reflects the fact that models often include closure relations and structural properties. We present a unifying operator theoretic approach to analyze the properties of such operator equations and illustrate this by several applications.
%including infinite dimensional port-Hamiltonian differential-algebraic systems (pHDAEs). }
\end{abstract}
\noindent
{\bf Keywords:}
abstract differential-algebraic equation,  closure relation, dissipative Hamiltonian system, energy based modelling, operator pair, regular pair, singular pair

\noindent
{\bf AMS subject classification.} 37L05,  37L20, 47D06, 47F05, 93B28, 93C05.
%\date{\today}

\renewcommand{\thefootnote}{\fnsymbol{footnote}}

\footnotetext[1]{
Institut f\"ur Mathematik MA 4-5, TU Berlin, Str.\ des 17.\ Juni 136,
D-10623 Berlin, FRG.
\texttt{mehrmann@math.tu-berlin.de}. Supported by {\it by Deutsche Forschungsgemeinschaft (DFG, German Research Foundation) Excellence Cluster 2046 Math${}^+$: Project No.~390685689.}}
\footnotetext [2] {University of Twente, Department of Applied Mathematics, P.O.\ Box 217, 7500 AE Enschede, The Netherlands, and  Department of Mechanical Engineering, Eindhoven University of Technology, P.O.\ Box 513, 5600 MB Eindhoven, The Netherlands.
\texttt{h.j.zwart@utwente.nl}}

\section{Introduction}
%The class of port-Hamiltonian (pH) systems is a very %important model class that allows to model many classes of %physical systems with interact with their environment. %Examples include ordinary and partial differential equations, %see e.g.\ \cite{SchJ14}. By now the analytic theory and also %the numerical methods are well developed with a multitude of %results ranging from control, approximation, and well-%posedness of partial differential equations, \cite%{JacZ12,SchJ14}.
In this paper we study the mathematical modeling, the analytical theory and the representation of abstract  linear differential-algebraic equations (DAEs) of the form
\begin{equation}
\label{eq:adae}
  \mathcal E \dot{x}(t) = {\mathcal A}\mathcal Q x(t)
\end{equation}
on the infinite-dimensional Hilbert space   $\mathbb X$ with inner product $\langle\cdot, \cdot \rangle $.
We assume  that ${\mathcal A}: D({\mathcal A}) \subseteq {\mathbb X} { \rightarrow} {\mathbb X}$ is a \emph{dissipative linear operator}, i.e.,
$\langle \mathcal Ax, x \rangle+\langle x, \mathcal Ax \rangle\leq 0$ for all $x$ in the domain of ${\mathcal A}$.
%{\mathcal A}+ {\mathcal A}^*$ is negative semidefinite denoted as ${\mathcal A}+ {\mathcal A}^* \leq 0$.
The operators $\mathcal E:{\mathbb X}\to \mathbb X$ and $\mathcal Q:{\mathbb X} \to \mathbb X$
are assumed to be bounded linear operators that satisfy further geometric conditions and define an \emph{energy functional or Hamiltonian} via
\begin{equation}\label{energy}
 {\mathcal H}(x)
 %= \langle x(t), \tilde {\mathcal Q} x(t) \rangle
 := \langle \mathcal E x,  \mathcal Q x \rangle,
\end{equation}
which is assumed to be non-negative, i.e., $\mathcal H(x) \geq 0$ for all $x \in {\mathbb X}$.
%
%in the standard dual pairing in {\color{blue} $\mathbb X$}.

We call this class of problems \emph{abstract dissipative Hamiltonian DAEs (adHDAEs)}.

Abstract differential-algebraic systems do not only arise by including constraints coming from the underlying physical system, see e.g.\ \cite{ErbJMRT24,GerR23,LamMT13}, but many standard systems of partial differential equations (PDEs) can be viewed as  abstract differential-algebraic equation on an extended state-space. We present several applications where this is the case.

The class of adHDAEs is also  strongly motivated by modeling physical systems in the model class of (abstract) port-Hamiltonian differential-algebraic  systems (pHDAEs), a class which is of great relevance in many applications and  has recently seen a huge number of applications in almost all physical domains, see e.g.\  \cite{BarCGJR23,BeaMXZ18,BenMHL24,GerHR21,GerHRS21,GueBJR21,JacM22,JaeEGJ22,MehS23,Mor23,PhiRS23,Rei21,Sch13,SchJ14,SchM02}.
To illustrate the concept of adHDAEs, consider the following example.
\begin{example}\label{ex:vibstring} \cite{JacZ12} {\rm
The vibrating string in one space dimension can be modelled by the PDE
\begin{equation}
\label{wave}
  \rho \frac{\partial^2 w}{\partial t^2} = \frac{\partial }{\partial \zeta}\left(T \frac{\partial w}{\partial \zeta} \right),
\end{equation}
where $\rho$ is the mass density,  $w$ is the vertical  displacement, and $T$ is the Young modulus of the material, $\zeta$ is in a one-dimensional spatial domain, and $t$ the time.

The port-Hamiltonian modeling approach, see \cite{JacZ12},
introduces the extended state
\[
  z(t) = \begin{bmatrix} \rho \frac{\partial w}{\partial t} \\  \frac{\partial w}{\partial \zeta}\end{bmatrix}
\]
in the state space ${\mathbb X}=L^2(\Omega;{\mathbb R}^2)$, with $\Omega$ the spatial interval,
and leads to a representation of (\ref{wave}) given by
\begin{align*}
  \dot{z}(t) =&\ \underbrace{\begin{bmatrix} 0 & \frac{\partial}{\partial \zeta}\\  \frac{\partial }{\partial \zeta} & 0 \end{bmatrix}} \underbrace{\begin{bmatrix} \frac{1}{\rho} & 0 \\  0 & T \end{bmatrix}}z(t)\\
  := & \qquad {\mathcal A}\qquad  \quad {\tilde {\mathcal Q}}\quad z(t),
\end{align*}
with a Hamiltonian $\mathcal H(z(t))= \langle z(t), \tilde {\mathcal Q} z(t) \rangle $.
If the mass density $\rho$ is close to zero, then it is important to analyze what happens when one considers the density $\rho= 0$. For $\rho$ close to zero, it is more appropriate to consider a different state
%this is gives a problem in
%\[
%  {\mathcal H} = \begin{bmatrix} %\frac{1}{\rho} & 0 \\  0 & T %\end{bmatrix}.
%\]
%This can be solved this by changing %the state, i.e.,
\[
%  x(t) = \begin{bmatrix} \rho \frac{\partial w}{\partial t} \\  \frac{\partial w}{\partial \zeta}\end{bmatrix} \quad \mbox{replaced by }
x(t) = \begin{bmatrix}  \frac{\partial w}{\partial t} \\  \frac{\partial w}{\partial \zeta}\end{bmatrix},
\]
which leads to a representation
\begin{align*}
  \underbrace{\begin{bmatrix} \rho & 0 \\  0 & 1 \end{bmatrix}}\dot{x}(t) =&\ \underbrace{\begin{bmatrix} 0 & \frac{\partial}{\partial \zeta}\\  \frac{\partial }{\partial \zeta} & 0 \end{bmatrix}} \underbrace{\begin{bmatrix} 1 & 0 \\  0 & T \end{bmatrix}}x(t), \\% \Leftrightarrow\\
 \mathcal E \qquad \dot{x}(t) = & \qquad {\mathcal A}\qquad\quad  \mathcal Q\qquad x(t)
\end{align*}
where we have introduced the matrices $\mathcal E$ and $\mathcal Q$ and the differential operator ${\mathcal A}$.

Note that by this change of variables the value of the Hamiltonian $\mathcal H$ stays the same, i.e.,
\[
  {\mathcal H}(t) = \langle z(t), \tilde {\mathcal Q} z(t) \rangle = \langle \mathcal E x(t), \mathcal Q x(t) \rangle.
\]
%So we have the differential equation
%\begin{align*}
%  \underbrace{\begin{bmatrix} \rho & 0 \\  0 & 1 \end{bmatrix}}\dot{z}(t) =&\ \underbrace{\begin{bmatrix} 0 & \frac{\partial}{\partial \zeta}\\  \frac{\partial }{\partial \zeta} & 0 \end{bmatrix}} \underbrace{\begin{bmatrix} 1 & 0 \\  0 & T \end{bmatrix}}z(t) \Leftrightarrow\\
% E \qquad \dot{z}(t) = & \qquad {\mathcal J}\qquad\qquad  Q\quad z(t).
%\end{align*}
%
However,  in this formulation, we can set both $\rho=0$, and $T=0$ and then either $\mathcal E$ or $\mathcal Q$ or both become singular.

For invertible $\mathcal E$, we can express this system as a standard wave equation, of which it is known that it will generate a contraction semigroup, provided the appropriate boundary conditions are posed, \cite{JacZ12}.
}
\end{example}

\begin{remark}\label{rem:whyDAEs}{\rm
Example~\ref{ex:vibstring}  demonstrates that the use of differential-algebraic equations is essential when considering limiting situations, see also \cite{BenMHL24,SchM20,MehS23,SchM23} for detailed discussions. In many applications one can resolve the constraint equations and return to explicit formulations in the time derivative. But this is not always a good mathematical formulation for several reasons. First of all it may happen that the resulting system is much more sensitive under perturbations. But more important, by resolving the constraints, they are not visible in the equations any longer, even though they usually are of physical relevance. Furthermore,  they are then also not enforced during a numerical simulation of the system, see \cite{BreCP96,HaiW96,KunM06} and this can lead to a drift of the numerical solution from the constraint manifold. }%end rm
\end{remark}

Example~\ref{ex:vibstring} is a motivation to study the properties of adHDAEs of the form \eqref{eq:adae} in which both operators $\mathcal E$ and $\mathcal Q$ may be {\em singular}\/ matrices or non-bijective operators, and where ${\mathcal A}$ generates a contraction semigroup on the Hilbert space $\mathbb X$.
When modelling physical systems in a modular fashion then often not only $\mathcal E$ and $\mathcal Q$ may be singular but the equation \eqref{eq:adae} may be overdetermined or not uniquely solvable. For  general abstract DAEs this is hard to analyze but we present a simple characterization of singularity for \eqref{eq:adae} in Subsection~\ref{sec:regular}.

One may have the impression that the case that $\mathcal E$ and/or $\mathcal Q$ are singular  is a very special case that is not encountered often when modeling physical processes.  However, we will demonstrate that this is almost the standard case. To illustrate this, consider the following example.

\begin{example}\label{ex:heat-equation}
{\rm Consider the derivation of the diffusion/heat equation in a one-dimensional domain. The defining relation between the temperature $T$ and the heat flux $J$ is given by the PDE
\begin{equation}
\label{eq:heat1}
  \frac{ \partial T}{\partial t} = -\alpha \frac{ \partial J}{\partial \zeta},
\end{equation}
where  $\alpha >0$ is the diffusivity constant.  Using Fourier's  law to model the heat flux as proportional (with thermal conductivity $k$) to the spatial derivative of the temperature, i.e.,
\begin{equation}
\label{eq:heat2}
  J = -k \frac{ \partial T}{\partial \zeta}
\end{equation}
%
%Combining (\ref{eq:heat1}) and (\ref{eq:heat2})
gives the standard diffusion/heat equation
\[
   \frac{ \partial T}{\partial t} = k \alpha \frac{ \partial^2 T}{\partial \zeta^2}.
\]
However, we can also express the system as adHDAE system
\begin{align}
\label{eq:heat3}
  \underbrace{\begin{bmatrix}\alpha^{-1} & 0 \\  0 & 0 \end{bmatrix}}\frac{\partial}{\partial t} \begin{bmatrix} T\\ J \end{bmatrix} & = \underbrace{\begin{bmatrix} 0 & -\frac{\partial}{\partial \zeta}\\  -\frac{\partial }{\partial \zeta} & -1 \end{bmatrix}}\underbrace{\begin{bmatrix} 1 & 0 \\  0 & k^{-1} \end{bmatrix}}
 \begin{bmatrix} T\\ J \end{bmatrix} \\
 \nonumber
\mathcal E \qquad \quad\dot{x}(t) &=  \qquad \quad{\mathcal A}\qquad\quad  \mathcal Q\qquad x(t)
\end{align}
with ${\mathbb X}$ as in the previous example. Thus it is in the form \eqref{eq:adae}
with $\mathcal E$ being singular. Note that in this case the singularity of $\mathcal E$ is not caused by a physical parameter becoming zero, but it is a direct consequence of the closure relation (\ref{eq:heat2}). Since these closure relations appear almost everywhere in mathematical modelling, we see that a singular $\mathcal E$ is very common.
%Again, there is a natural Hamiltonian.
}
\end{example}

The discussed examples demonstrate that in modeling with adHDAEs different representations  are possible, and some are preferable to others, e.g. in the case of limiting situations.

The operator $\mathcal A$ in (\ref{eq:heat3}) is very similar to that in \eqref{wave}, and so one may think  that properties are related, but it is well-known that the wave and heat equation behave completely differently, {the first has oscillating solution behavior while the second is diffusive, so the solution decays.}
However, as we will demonstrate, the solution theory of the two PDEs is strongly related, see Example~\ref{E3:HZ} below.

The structure in (\ref{eq:adae}) is also motivated by the class of finite dimensional dissipative Hamiltonian descriptor systems introduced in  \cite{BeaMXZ18}, see also \cite{MehU23,Mor23} that have the form \eqref{eq:adae} with $\mathcal A=\mathcal J-\mathcal R$,
where $\mathcal J$ is (formally) skew-adjoint, and $\mathcal E^*\mathcal Q$ as well as $\mathcal R$ are self-adjoint and { nonnegative (positive semidefinite)}.

The paper is organized as follows. In  Section~\ref{sec:2} we introduce our basic set-up together with several assumptions.
%Furthermore, we study these assumptions in order to have more simple conditions to replace them.
In Section \ref{sec:3} we study the solution theory of adHDAEs of the form (\ref{eq:adae}). These results are illustrated in Section~\ref{sec:4} by several examples, showing their applicability. In these examples we also recover many results, which often were obtained by other methods. In Section \ref{sec:5} we treat the case in which the singularity of $\mathcal E$  restricts the state space, and again our result is illustrated by examples.

\section{Representation of adHDAEs}
\label{sec:2}
In this section we study adHDAEs of the form \eqref{eq:adae}
on an infinite-dimensional Hilbert space {$\mathbb X$}. In order to analyze the solution properties we make some general assumptions on the structure of $\mathcal A,\mathcal E$, and $\mathcal Q$.

Consider an abstract dissipative Hamiltonian differential-algebraic equation (adHDAE)
\begin{equation}
\label{eq:1HZ-new}
\mathcal   E_{ext} \dot{x}(t) = {\mathcal A}_{ext} \mathcal Q_{ext} x(t)
\end{equation}
of the form \eqref{eq:adae}
with the following structural properties.
\begin{assumption}
\label{A0:HZ}
\begin{itemize}
\item [i)]
  The state-space is a Hilbert space ${\mathbb X}_{ext}= { \mathbb X}_1 \oplus {\mathbb X}_2 \oplus { \mathbb X}_3$.
\item [ii)]
 The operator  ${\mathcal A}_{ext}=\begin{bmatrix} {\mathcal A}_{1,ext} \\  {\mathcal A}_{2,ext}\\  {\mathcal A}_{3,ext} \end{bmatrix}$ is a \emph{dissipative operator} on ${\mathbb X}_{ext}$, i.e., $\mathrm{Re}\langle {\mathcal A}_{ext}x, x\rangle \leq 0$ for all $x$ in the domain $D({\mathcal A}_{ext})$ of ${\mathcal A}_{ext}$.
\item [iii)]
  The operators $\mathcal E_{ext}$ and $\mathcal Q_{ext}$ are block-operators of the  form
  \begin{equation}
  \label{eq:2HZ}
    \mathcal E_{ext} = \begin{bmatrix} \mathcal E_1 & 0 & 0 \\ 0 & 0& 0 \\ 0 & 0 & \mathcal E_3\\  \end{bmatrix}, \qquad \mathcal Q_{ext} = \begin{bmatrix} \mathcal Q_1 & 0 & 0 \\ 0 & \mathcal Q_2 & 0\\ 0 & 0& 0 \end{bmatrix},
  \end{equation}
  where $\mathcal E_1,\mathcal E_3,\mathcal Q_1$ and $\mathcal Q_2$ are bounded and boundedly invertible. Furthermore, we assume that $\mathcal E_1^*\mathcal Q_1$ is \emph{coercive}, i.e., it is self-adjoint and (strictly) positive.
  \item [iv)]
    There exists an $s \in {\mathbb C}^+ := \{ s \in {\mathbb C} \mid \mathrm{Re}(s) >0\}$ such that the operator $s\mathcal E_{ext}- {\mathcal A}_{ext} \mathcal Q_{ext}$ with domain $\{ x \in {\mathbb X} \mid \mathcal Q_{ext} x \in D({\mathcal A}_{ext})\}$ is boundedly invertible.
\end{itemize}
\end{assumption}

\begin{remark}\label{rem:ass}{\rm
Assumption \ref{A0:HZ} seems to be very restrictive at first sight. However, as we will demonstrate, it holds for many examples
and it allows us to prove our main results.
However, this assumption can be relaxed in many particular cases by using different proof techniques.

{ See also
%Note also that condition iv)
%implies that the abstract DAE has chain-%index $0$ on the given subset, see %
\cite{ErbJMRT24} for the analysis of the chain-index under this assumption.}
}
\end{remark}

In the setting of finite dimensional DAEs, see \cite{MehMW18},  condition iv) in Assumption~\ref{A0:HZ} implies that the pair $(\mathcal E_{ext},{\mathcal A}_{ext}\mathcal Q_{ext})$ forms a \emph{regular pair}, see e.g.\ \cite{KunM06}.
We will use this terminology also in the infinite dimensional case when the operators satisfies Assumption~\ref{A0:HZ}. iv). A characterization when a pair is regular or singular is given in Subsection~\ref{sec:regular}.
\begin{remark}\label{rem:cs}{\rm
In the case that $\mathcal E_{ext},\mathcal Q_{ext}$ are matrices,  the condition that $\mathcal E_{ext}^*\mathcal Q_{ext}$ is self-adjoint means that
the columns of
\[
\begin{bmatrix}
    \mathcal E_{ext}\\
    \mathcal Q_{ext}
\end{bmatrix}
\]
span an isotropic subspace of ${\mathbb X} \times { \mathbb X}^*={ {\mathbb X} \times { \mathbb X}}$, see e.g. \cite{MehS23,SchM23}, which is a Lagrange subspace if the dimension is maximal, i.e., that of {$\mathbb X$}. This is the case if and only if the pair $(\mathcal E_{ext},\mathcal Q_{ext})$  is regular. For Lagrange subspaces the representation~\eqref{eq:2HZ} can always be achieved by a change of basis using a cosine-sine decomposition, see \cite{MehMW18,PaiW94}.
}
\end{remark}

\begin{remark}\label{rem:energy}
{\rm
From the modeling point of view systems of the form \eqref{eq:adae} lead to a natural definition of an \emph{energy functional (Hamiltonian)} %for \eqref{eq:adae} would be given by
\begin{equation}\label{defenergy}
  {\mathcal H(x)}:=\langle \mathcal E_{ext} x, \mathcal Q_{ext} x \rangle.
\end{equation}
However, the definition of the Hamiltonian is by no means unique, in particular the choice of variables in the kernels of {$\mathcal E_{ext}$ and $\mathcal Q_{ext}$} is arbitrary and thus there are many different representations of the state variables with the same Hamiltonian, see Example~\ref{ex:vibstring}. Under the conditions in Assumption~\ref{A0:HZ}, we  have that
\[
 {\mathcal H}(x_1)=\langle \mathcal E_1 x_1, \mathcal Q_1 x_1 \rangle= {\mathcal H}(x),
\]
i.e., the Hamiltonian may also be defined on a restricted subspace.

For a detailed discussion of this topic of different representations in the finite dimensional case, see
\cite{MehS23,SchM23}.
}
\end{remark}

Looking at a system (\ref{eq:1HZ-new}) that satisfies Assumption~\ref{A0:HZ}, we see that the third state, $x_3$, does not influence the first nor the second state. However, its behaviour is dictated by the other two. So we could regard $\dot{x}_3$ in (\ref{eq:1HZ-new}) as a kind of \emph{output to the system}. Since we are mainly interested in the dynamics of the first state, a natural question is if we can find a reduced representation of the system with similar properties by removing the third state. This topic has been discussed extensively in the case of finite dimensional
port-Hamiltonian DAEs, see \cite{BeaMXZ18,MehU23}. Since the conditions in Assumption~\ref{A0:HZ} include $\mathcal E_3$ and ${\mathcal A}_{3,ext}$, it is not clear a priori whether similar properties still hold without these assumptions.
Our first result shows that this is indeed the case.
\begin{theorem}\label{T:1}
Consider an adHDAE of the form~\eqref{eq:adae} that satisfies Assumption \ref{A0:HZ}. Introduce the operator
\begin{equation}
  \label{eq:def-A}
    {\mathcal A}\begin{bmatrix} x_1 \\ x_2\end{bmatrix} := \begin{bmatrix} {\mathcal A}_1 \\  {\mathcal A}_2\end{bmatrix} \begin{bmatrix} x_1 \\ x_2 \end{bmatrix} := \begin{bmatrix} {\mathcal A}_{1,ext} \\  {\mathcal A}_{2,ext} \end{bmatrix} \begin{bmatrix} x_1 \\ x_2 \\ 0\end{bmatrix} ,
\end{equation}
with domain
\begin{equation}
  \label{eq:dom-A}
    D({\mathcal A}) = \{ \begin{bmatrix} x_1 \\ x_2\end{bmatrix} \in {\mathbb X}_1 \oplus {\mathbb X}_2 \mid \begin{bmatrix} x_1\\ x_2 \\ 0 \end{bmatrix} \in D({\mathcal A}_{ext})\}.
  \end{equation}
  Then ${\mathcal A}$ is dissipative and  with
    \begin{equation}
  \label{eq:2aHZ}
    \mathcal E = \begin{bmatrix} \mathcal E_1 & 0  \\  0& 0 \end{bmatrix}, \qquad \mathcal Q = \begin{bmatrix} \mathcal Q_1 & 0 \\ 0&\mathcal Q_2 \end{bmatrix},
  \end{equation}
the pair $(\mathcal E,{\mathcal A}\mathcal Q)$ is regular.
\end{theorem}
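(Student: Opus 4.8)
The plan is to read off both claims directly from the block structure in Assumption~\ref{A0:HZ}, exploiting that in the definition of $\mathcal A$ the third input block is frozen to zero and that $\mathcal Q_{ext}$ annihilates $x_3$. For the dissipativity of $\mathcal A$, I would take $\begin{bmatrix} x_1 \\ x_2 \end{bmatrix} \in D(\mathcal A)$, so that $\begin{bmatrix} x_1 \\ x_2 \\ 0 \end{bmatrix} \in D(\mathcal A_{ext})$ by \eqref{eq:dom-A}. Expanding the inner product on $\mathcal X_1\oplus\mathcal X_2$ and comparing it with the one on $\mathcal X_{ext}$, the only additional term in the latter is $\langle \mathcal A_{3,ext}\begin{bmatrix} x_1 \\ x_2 \\ 0\end{bmatrix}, 0\rangle$, which vanishes. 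Hence $\mathrm{Re}\,\langle \mathcal A\begin{bmatrix} x_1\\x_2\end{bmatrix}, \begin{bmatrix}x_1\\x_2\end{bmatrix}\rangle = \mathrm{Re}\,\langle \mathcal A_{ext}\begin{bmatrix}x_1\\x_2\\0\end{bmatrix}, \begin{bmatrix}x_1\\x_2\\0\end{bmatrix}\rangle \le 0$ by Assumption~\ref{A0:HZ}.ii), so $\mathcal A$ is dissipative.

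For regularity I would fix the same $s\in\mathbb C^+$ furnished by Assumption~\ref{A0:HZ}.iv) and write out $s\mathcal E_{ext}-\mathcal A_{ext}\mathcal Q_{ext}$ in the coordinates $(x_1,x_2,x_3)$. Since $\mathcal Q_{ext}\begin{bmatrix}x_1\\x_2\\x_3\end{bmatrix}=\begin{bmatrix}\mathcal Q_1x_1\\\mathcal Q_2x_2\\0\end{bmatrix}$ does not involve $x_3$, the first two output components depend only on $(x_1,x_2)$ and coincide exactly with $(s\mathcal E-\mathcal A\mathcal Q)\begin{bmatrix}x_1\\x_2\end{bmatrix}$, while the third component equals $s\mathcal E_3x_3-\mathcal A_{3,ext}\begin{bmatrix}\mathcal Q_1x_1\\\mathcal Q_2x_2\\0\end{bmatrix}$. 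Thus the full operator has the block lower-triangular form $\begin{bmatrix} s\mathcal E-\mathcal A\mathcal Q & 0 \\ B & s\mathcal E_3\end{bmatrix}$, where $B\begin{bmatrix}x_1\\x_2\end{bmatrix}:=-\mathcal A_{3,ext}\begin{bmatrix}\mathcal Q_1x_1\\\mathcal Q_2x_2\\0\end{bmatrix}$. Moreover, the domain constraint $\mathcal Q_{ext}x\in D(\mathcal A_{ext})$, namely $\begin{bmatrix}\mathcal Q_1x_1\\\mathcal Q_2x_2\\0\end{bmatrix}\in D(\mathcal A_{ext})$, is independent of $x_3$, so the domain factorizes as $D(s\mathcal E-\mathcal A\mathcal Q)\oplus\mathcal X_3$, with the first factor matching the top-left block exactly.

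From here I would deduce bounded invertibility of $s\mathcal E-\mathcal A\mathcal Q$ from that of the full operator together with the bounded invertibility of $s\mathcal E_3$, which holds since $\mathcal E_3$ is boundedly invertible and $s\ne 0$. For surjectivity, given $y_1\in\mathcal X_1\oplus\mathcal X_2$ I invert the full operator on $\begin{bmatrix}y_1\\0\end{bmatrix}$; the first two components of the preimage solve $(s\mathcal E-\mathcal A\mathcal Q)\begin{bmatrix}x_1\\x_2\end{bmatrix}=y_1$, and the induced norm estimate shows that the inverse is bounded. For injectivity, if $(s\mathcal E-\mathcal A\mathcal Q)\begin{bmatrix}x_1\\x_2\end{bmatrix}=0$ I set $x_3:=-(s\mathcal E_3)^{-1}B\begin{bmatrix}x_1\\x_2\end{bmatrix}$, which is well defined and keeps $\begin{bmatrix}x_1\\x_2\\x_3\end{bmatrix}$ in the domain; then this vector lies in the kernel of the full operator and is therefore zero. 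This yields that $s\mathcal E-\mathcal A\mathcal Q$ is boundedly invertible, i.e.\ the pair $(\mathcal E,\mathcal A\mathcal Q)$ is regular.

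The computations are elementary; the point that requires care — and the main obstacle — is the domain bookkeeping: one must verify that $D(s\mathcal E-\mathcal A\mathcal Q)$ really coincides with the first two components of $D(s\mathcal E_{ext}-\mathcal A_{ext}\mathcal Q_{ext})$, and check in the injectivity step that the completion $x_3$ keeps the vector inside $D(s\mathcal E_{ext}-\mathcal A_{ext}\mathcal Q_{ext})$. Once this factorization is established, the block-triangular inversion is routine.
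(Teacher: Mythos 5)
Your proposal is correct and follows essentially the same route as the paper's own proof: the dissipativity argument is identical, and your block lower-triangular factorization is just a repackaging of the paper's two steps (injectivity by completing a kernel element with $x_3 = \frac{1}{s}\mathcal E_3^{-1}{\mathcal A}_{3,ext}\left[\begin{smallmatrix}\mathcal Q_1 x_1\\ \mathcal Q_2 x_2\\ 0\end{smallmatrix}\right]$, surjectivity and boundedness by applying $(s\mathcal E_{ext}-{\mathcal A}_{ext}\mathcal Q_{ext})^{-1}$ to vectors with vanishing third component). Your explicit attention to the domain bookkeeping, i.e.\ that $D(s\mathcal E_{ext}-{\mathcal A}_{ext}\mathcal Q_{ext})$ factorizes as $D(s\mathcal E-{\mathcal A}\mathcal Q)\oplus\mathcal X_3$, is a point the paper uses implicitly, and you verify it correctly.
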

\noindent\proof
For $\left[\begin{smallmatrix} x_1 \\ x_2\end{smallmatrix}\right] \in D({\mathcal A})$ we have
\[
  \left\langle \begin{bmatrix} x_1 \\ x_2\end{bmatrix}, {\mathcal A} \begin{bmatrix} x_1 \\ x_2\end{bmatrix} \right\rangle = \left\langle \begin{bmatrix} x_1 \\ x_2\\ 0\end{bmatrix}, {\mathcal A}_{ext} \begin{bmatrix} x_1 \\ x_2\\ 0\end{bmatrix} \right\rangle.
\]
Since by assumption the last expression has non-positive real part, we see that ${\mathcal A}$ is dissipative.

Let $s \in {\mathbb C}^+$ be as in  Assumption \ref{A0:HZ} iv). We will show that the operator $s\mathcal E-{\mathcal A} \mathcal  Q$, with domain $\{ x \in {\mathbb X}_1 \oplus {\mathbb X}_2 \mid \mathcal Q x \in D({\mathcal A})\}$, is boundedly invertible.

Let  $x=\left[\begin{smallmatrix} x_1 \\ x_2\end{smallmatrix} \right] \in  {\mathbb X}_1 \oplus {\mathbb X}_2$ be such that $\mathcal Qx \in D({\mathcal A})$ and $(s\mathcal E - {\mathcal A}\mathcal Q)x=0$.
Define $x_3 = \frac{1}{s} \mathcal E_3^{-1}{\mathcal A}_{3,{ext}} \left[\begin{smallmatrix}\mathcal Q_1 x_1\\ \mathcal Q_2 x_2 \\ 0\end{smallmatrix}\right]$.
With this choice, then
\[
  (s\mathcal E_{ext} - {\mathcal A}_{ext}{ \mathcal Q_{ext}})\begin{bmatrix} x_1 \\ x_2 \\ x_3\end{bmatrix} =0.
\]
Since the pair $(\mathcal E_{ext},{\mathcal A}_{ext}{ \mathcal Q_{ext})}$ is regular, this implies, in particular, that $x_1=0$ and $x_2=0$. Thus $(s \mathcal E - {\mathcal A}\mathcal Q)$ is injective.

For $\left[\begin{smallmatrix} y_1 \\ y_2\end{smallmatrix}\right] \in {\mathbb X}_1 \oplus {\mathbb X}_2$, define
\begin{equation}
\label{eq:2bHZ}
  \begin{bmatrix} \tilde{x}_1 \\ \tilde{x}_2 \\ \tilde{x}_3\end{bmatrix}
= (s\mathcal E_{ext} - {\mathcal A}_{ext}\mathcal Q_{ext})^{-1} \begin{bmatrix} y_1  \\ y_2\\ 0\end{bmatrix}.
\end{equation}
Then
\begin{equation}
\label{eq:2cHZ}
  \begin{bmatrix} x_1 \\ x_2 \\ 0 \end{bmatrix} := \begin{bmatrix}\mathcal Q_1 \tilde{x}_1 \\ \mathcal Q_2 \tilde{x}_2 \\ 0 \end{bmatrix} = \mathcal Q_{ext} \begin{bmatrix} \tilde{x}_1 \\ \tilde{x}_2 \\ \tilde{x}_3\end{bmatrix}
\end{equation}
is an element of $D({\mathcal A}_{ext})$, and
\[
  \begin{bmatrix} (s\mathcal E-{\mathcal A} \mathcal Q)\begin{bmatrix} \tilde{x}_1 \\ \tilde{x}_2 \end{bmatrix} \\ s \mathcal E_3 \tilde{x}_3 \end{bmatrix} = (s\mathcal E_{ext} - {\mathcal A}_{ext}\mathcal Q_{ext}) \begin{bmatrix} \tilde{x}_1 \\ \tilde{x}_2 \\ \tilde{x}_3\end{bmatrix} +\begin{bmatrix} 0 \\ 0 \\ y_3 \end{bmatrix} =  \begin{bmatrix} y_1 \\ y_2 \\ y_3 \end{bmatrix},
\]
where $y_3 = {\mathcal A}_{3,ext}\left[\begin{smallmatrix} x_1 \\ x_2 \\ 0 \end{smallmatrix} \right]$.
In particular,
\[
  (s\mathcal E-{\mathcal A}\mathcal Q)\begin{bmatrix} \tilde{x}_1 \\ \tilde{x}_2 \end{bmatrix} = \begin{bmatrix} y_1 \\ y_2 \end{bmatrix}
\]
and so $(s\mathcal E-{\mathcal A} \mathcal Q)$ is surjective. Combined with its injectivity and equations (\ref{eq:2bHZ})--(\ref{eq:2cHZ}), we see that $(s\mathcal E-{\mathcal A} \mathcal Q)$ is boundedly invertible.
\hfill\eproof
\medskip

From Theorem~\ref{T:1} we see that, if Assumption~\ref{A0:HZ} holds for the adHDAE (\ref{eq:1HZ-new}), then for the \emph{reduced adHDAE}
\begin{equation}
\label{eq:1HZ}
  \mathcal E \dot{x}(t) = {\mathcal A}\mathcal Q x(t)
\end{equation}
with ${\mathcal A}$, $\mathcal E$, and $\mathcal Q$ defined in (\ref{eq:def-A})--(\ref{eq:2aHZ}), the following conditions are satisfied.
\begin{assumption}
\label{A1:HZ}~
\begin{itemize}
\item [i)]
  The state space is the Hilbert space ${\mathbb X}= {\mathbb X}_1 \oplus {\mathbb X}_2$.
\item [ii)]
  ${\mathcal A}=\begin{bmatrix} {\mathcal A}_1 \\ {\mathcal A}_2 \end{bmatrix}$ is dissipative on {$\mathbb X$}.
\item [iii)]
  The operators $\mathcal E$ and $\mathcal Q$ are of the form
\begin{equation}
  \label{eq:2HZ-bis}
   \mathcal E = \begin{bmatrix} \mathcal E_1 & 0 \\ 0& 0 \end{bmatrix}, \qquad\mathcal Q = \begin{bmatrix}\mathcal Q_1 & 0 \\ 0&\mathcal Q_2 \end{bmatrix},
\end{equation}
where $\mathcal E_1,\mathcal Q_1$ and $\mathcal Q_2$ are bounded and boundedly invertible operators. Furthermore, $\mathcal E_1^*\mathcal Q_1$ is coercive, i.e., it is self-adjoint and { $\langle \mathcal E_1 x, \mathcal Q_1 x \rangle > \kappa \|x\|^2 >0$ for all nonzero $x$}.
  \item [iv)]
    There exists an $s \in {\mathbb C}^+ := \{ s \in {\mathbb C} \mid \mathrm{Re}(s) >0\}$ such that $s\mathcal E - {\mathcal A}\mathcal Q$ is boundedly invertible.
\end{itemize}
\end{assumption}

Theorem~\ref{T:1} shows that in an adHDAE system \eqref{eq:adae} that satisfies Assumption~\ref{A0:HZ} there exists a reduced subsystem for which Assumption~\ref{A1:HZ} holds.
In our next result we analyze the relation between the two sets of Assumptions \ref{A0:HZ} and \ref{A1:HZ}. We show in particular that we can always extend an adHDAE of the form~\eqref{eq:1HZ} satisfying Assumption  \ref{A1:HZ} to a system of the form \eqref{eq:adae} satisfying Assumption~\ref{A0:HZ} without changing the Hamiltonian.
\begin{theorem} \label{th:ext}
Consider an adHDAE of the form (\ref{eq:1HZ}) satisfying Assumption \ref{A1:HZ}. Let ${\mathcal A}_{ext}$ with $D({\mathcal A}_{ext}) \subset {\mathbb X}_1 \oplus {\mathbb X}_2 \oplus {\mathbb X}_3$ be a dissipative extension of ${\mathcal A}$ such that (\ref{eq:def-A}) and (\ref{eq:dom-A}) hold. Let $\mathcal E_3$ be a bounded and boundedly invertible operator on ${\mathbb X}_3$, and define $\mathcal E_{ext}$ and $\mathcal Q_{ext}$ as in (\ref{eq:2HZ}). Then the triple $(\mathcal E_{ext}, {\mathcal A}_{ext}, \mathcal Q_{ext})$ satisfies Assumption~\ref{A0:HZ} with the same Hamiltonian \eqref{defenergy}.
\end{theorem}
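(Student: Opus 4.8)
The plan is to verify the four items of Assumption~\ref{A0:HZ} for the triple $(\mathcal E_{ext},{\mathcal A}_{ext},\mathcal Q_{ext})$ and to check that the Hamiltonian \eqref{defenergy} is unchanged; only item iv) requires genuine work. Items i)--iii) are essentially bookkeeping: i) holds by the construction of $\mathcal X_{ext}=\mathcal X_1\oplus\mathcal X_2\oplus\mathcal X_3$; ii) is precisely the hypothesis that ${\mathcal A}_{ext}$ is a dissipative extension of ${\mathcal A}$; and iii) holds because $\mathcal E_1,\mathcal Q_1,\mathcal Q_2$ are bounded and boundedly invertible with $\mathcal E_1^*\mathcal Q_1$ coercive by Assumption~\ref{A1:HZ}.iii), while $\mathcal E_3$ is bounded and boundedly invertible by hypothesis. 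For the Hamiltonian, a direct computation with the block forms \eqref{eq:2HZ} and \eqref{eq:2aHZ} gives $\langle \mathcal E_{ext}x,\mathcal Q_{ext}x\rangle=\langle \mathcal E_1 x_1,\mathcal Q_1 x_1\rangle=\langle \mathcal E x,\mathcal Q x\rangle$, so both representations carry the same Hamiltonian, in accordance with Remark~\ref{rem:energy}.

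The core of the argument is item iv). Here I would first exploit that $\mathcal Q_{ext}x=\left[\begin{smallmatrix}\mathcal Q_1 x_1\\ \mathcal Q_2 x_2\\ 0\end{smallmatrix}\right]$ does not depend on $x_3$, so that by \eqref{eq:def-A} the operator $s\mathcal E_{ext}-{\mathcal A}_{ext}\mathcal Q_{ext}$ is block lower triangular:
\[
 (s\mathcal E_{ext}-{\mathcal A}_{ext}\mathcal Q_{ext})\begin{bmatrix} x_1\\ x_2\\ x_3\end{bmatrix}
 =\begin{bmatrix} (s\mathcal E-{\mathcal A}\mathcal Q)\left[\begin{smallmatrix} x_1\\ x_2\end{smallmatrix}\right]\\
 s\mathcal E_3 x_3-{\mathcal A}_{3,ext}\left[\begin{smallmatrix}\mathcal Q_1 x_1\\ \mathcal Q_2 x_2\\ 0\end{smallmatrix}\right]\end{bmatrix},
\]
where the domain condition $\mathcal Q_{ext}x\in D({\mathcal A}_{ext})$ is, via \eqref{eq:dom-A}, exactly $\mathcal Q\left[\begin{smallmatrix} x_1\\ x_2\end{smallmatrix}\right]\in D({\mathcal A})$, i.e.\ the domain of the reduced operator.

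From this triangular form injectivity and surjectivity, for the $s$ furnished by Assumption~\ref{A1:HZ}.iv), follow directly. If the right-hand side vanishes, the first block together with the invertibility of $s\mathcal E-{\mathcal A}\mathcal Q$ forces $x_1=x_2=0$, whence $s\mathcal E_3 x_3=0$ and $x_3=0$ since $s\neq 0$ and $\mathcal E_3$ is invertible. For surjectivity I would solve the first block by $\left[\begin{smallmatrix} x_1\\ x_2\end{smallmatrix}\right]=(s\mathcal E-{\mathcal A}\mathcal Q)^{-1}\left[\begin{smallmatrix} y_1\\ y_2\end{smallmatrix}\right]$, which lands in the reduced domain so that ${\mathcal A}_{3,ext}$ may legitimately be applied, and then set $x_3=\tfrac1s\mathcal E_3^{-1}\big(y_3+{\mathcal A}_{3,ext}\left[\begin{smallmatrix}\mathcal Q_1 x_1\\ \mathcal Q_2 x_2\\ 0\end{smallmatrix}\right]\big)$.

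The main obstacle is showing that this inverse is \emph{bounded}, and the only delicate point is the third component: $x_3$ contains ${\mathcal A}_{3,ext}$ applied to the solution of the first block, which is controlled in the $\mathcal X$-norm but only in the graph norm of ${\mathcal A}_{ext}$, so an unbounded operator yields no a priori estimate. Rather than estimating this term by hand, I would invoke the closed graph theorem: since ${\mathcal A}_{ext}$ is closed and $\mathcal E_{ext},\mathcal Q_{ext}$ are bounded, the operator $s\mathcal E_{ext}-{\mathcal A}_{ext}\mathcal Q_{ext}$ is closed, and a closed operator that is bijective from its domain onto $\mathcal X_{ext}$ automatically has a bounded inverse. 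This is exactly the step at which closedness of ${\mathcal A}_{ext}$ enters; it is natural in the present setting, since the dissipative generators relevant here are closed (a dissipative operator being in any case closable). With iv) established, $(\mathcal E_{ext},{\mathcal A}_{ext}\mathcal Q_{ext})$ is a regular pair and all of Assumption~\ref{A0:HZ} holds with the unchanged Hamiltonian.
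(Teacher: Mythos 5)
Your construction is, in its core, the same as the paper's: the paper also dismisses the Hamiltonian claim in one line, rewrites the resolvent equation as the two block equations
\[
  (s\mathcal E - {\mathcal A}\mathcal Q)\begin{bmatrix} x_1 \\ x_2 \end{bmatrix} = \begin{bmatrix} y_1 \\ y_2 \end{bmatrix}, \qquad s \mathcal E_3 x_3 - {\mathcal A}_{3,ext}\begin{bmatrix} \mathcal Q_1 x_1 \\ \mathcal Q_2x_2 \\ 0 \end{bmatrix}= y_3,
\]
and solves them successively using regularity of $(\mathcal E,{\mathcal A}\mathcal Q)$ and invertibility of $\mathcal E_3$. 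Where you genuinely diverge is the boundedness of the inverse. The paper settles this with the single sentence ``since these inverse mappings are bounded, we conclude \dots'', which passes over exactly the point you isolate: the $x_3$-component of the inverse involves ${\mathcal A}_{3,ext}$ applied to $\left[\begin{smallmatrix}\mathcal Q_1 x_1\\ \mathcal Q_2 x_2\\ 0\end{smallmatrix}\right]$, and ${\mathcal A}_{3,ext}$ may be unbounded; the first block only controls $(x_1,x_2)$ in the norm of $\mathcal X_1\oplus\mathcal X_2$ (and controls ${\mathcal A}_1,{\mathcal A}_2$ applied to that vector), but gives no a priori bound on the third block-row. Your closed-graph repair of this step is correct and is more careful than the paper's own argument.

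The caveat is that your repair uses closedness of ${\mathcal A}_{ext}$, which is not among the hypotheses of Theorem~\ref{th:ext}: a dissipative operator is closable, not closed, and your parenthetical blurs that distinction. This is not a removable convenience. Take $\mathcal X_2=\{0\}$, $\mathcal X_1=\mathcal X_3=\ell^2$, ${\mathcal A}=-I$ with $D({\mathcal A})=\ell^2$, $\mathcal E_1=\mathcal Q_1=\mathcal E_3=I$, and define ${\mathcal A}_{ext}\left[\begin{smallmatrix}x_1\\0\end{smallmatrix}\right]=\left[\begin{smallmatrix}-x_1\\ Cx_1\end{smallmatrix}\right]$ on $D({\mathcal A}_{ext})=\mathcal X_1\oplus\{0\}$, where $C$ is an everywhere-defined unbounded linear map (Hamel-basis construction). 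Then ${\mathcal A}_{ext}$ is dissipative, (\ref{eq:def-A}) and (\ref{eq:dom-A}) hold, yet $(s\mathcal E_{ext}-{\mathcal A}_{ext}\mathcal Q_{ext})\left[\begin{smallmatrix}x_1\\x_3\end{smallmatrix}\right]=\left[\begin{smallmatrix}(s+1)x_1\\ sx_3-Cx_1\end{smallmatrix}\right]$ is bijective with unbounded inverse for every $s\in{\mathbb C}^+$, so Assumption~\ref{A0:HZ}.iv) fails. Hence some hypothesis of the kind you invoke (closedness of ${\mathcal A}_{ext}$, or boundedness of ${\mathcal A}_{3,ext}$ relative to the reduced resolvent) is actually necessary; your proof is complete once that assumption is made explicit, and the gap it exposes lies in the theorem as stated and in the paper's own proof rather than in your argument.
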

\proof
%By the assumptions made in the theorem, it only
It is clear that the Hamiltonian does not change, so it remains to show that $s\mathcal E_{ext}- {\mathcal A}_{ext} \mathcal Q_{ext}$ is boundedly invertible.
%Using the relation and the form of the %operators it is not hard to see that
The equation
\[
  (s\mathcal E_{ext} - {\mathcal A}_{ext}\mathcal Q_{ext})\begin{bmatrix} x_1 \\ x_2 \\ x_3\end{bmatrix} = \begin{bmatrix} y_1 \\ y_2 \\ y_3\end{bmatrix}
\]
is equivalent to the two equations
\[
  (s\mathcal E - {\mathcal A}\mathcal Q)\begin{bmatrix} x_1 \\ x_2 \end{bmatrix} = \begin{bmatrix} y_1 \\ y_2 \end{bmatrix} \mbox{ and } s \mathcal E_3 x_3 - {\mathcal A}_{3,ext}\begin{bmatrix} \mathcal Q_1 x_1 \\ \mathcal Q_2x_2 \\ 0 \end{bmatrix}= y_3.
\]
Since the pair $(\mathcal E,{\mathcal A}\mathcal Q)$ is regular we can determine $x_1$ and $x_2$ uniquely, and since $\mathcal E_3$ is boundedly invertible, $x_3$ is also  uniquely determined when $x_1,x_2$ are fixed. Since these inverse mappings are bounded, we conclude that $s\mathcal E_{ext} - {\mathcal A}_{ext}\mathcal Q_{ext}$ is boundedly invertible.
\hfill\eproof
\medskip

Based on Theorems~\ref{T:1} and~\ref{th:ext} we see that we can  reduce or extend regular adHDAEs when the Hamiltonian is not changed. For this reason from now on we only consider abstract DAEs without a component $x_3$,  i.e., we study the adHDAE (\ref{eq:1HZ})
under the Assumption \ref{A1:HZ}, see \cite{BeaMXZ18,MehU23,MehS23} for the finite dimensional case.
Note however, that for discretization methods and  practical applications it is essential to keep the equation for $x_3$ for initial value consistency checks and to avoid that the solution for the variables $x_1,x_2$ drifts off from the solution manifold, see \cite{KunM06}.

\subsection{Regularity and singularity of adHDAEs}\label{sec:regular}

In this section we consider the regularity and singularity of the pair of operators
\begin{equation}\label{eq:pair}
    (\mathcal E,\mathcal A\mathcal Q )
\end{equation}
associated with the adHDAE \eqref{eq:1HZ}. We study the regularity of (\ref{eq:pair}) under the first three conditions of Assumption \ref{A1:HZ}.
Using the fact that
\begin{equation}
\label{eq:4i}
  s\mathcal E- {\mathcal A} \mathcal Q = \left( s \begin{bmatrix} \mathcal E_1\mathcal Q_1^{-1}  & 0 \\  0& 0 \end{bmatrix} - {\mathcal A} \right)  \begin{bmatrix} \mathcal Q_1 & 0 \\ 0& \mathcal Q_2 \end{bmatrix}
\end{equation}
and that $\mathcal Q_1$ and $\mathcal Q_2$ are bounded and boundedly invertible,  the following lemma is immediate.
\begin{lemma}
  \label{L:2.1}
  The operator $s\mathcal E- {\mathcal A} \mathcal Q$ is boundedly invertible if and only if $s{\hat{\mathcal E}}- {\mathcal A}$ is boundedly invertible, where $\hat{\mathcal E} = \left[\begin{smallmatrix} \mathcal E_1\mathcal Q_1^{-1}  & 0 \\  0& 0 \end{smallmatrix} \right]$.

  Furthermore, $\mathcal E_1^*\mathcal Q_1$ is coercive if and only if $\mathcal E_1\mathcal Q_1^{-1}$ is coercive if and only if $\mathcal Q_1 \mathcal E_1^{-1}$ is coercive.
\end{lemma}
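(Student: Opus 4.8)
The plan is to treat the two assertions separately, since they are essentially independent. For the first assertion the factorization~\eqref{eq:4i} does almost all the work: writing $s\mathcal E - {\mathcal A}\mathcal Q = (s\mathcal E_Q - {\mathcal A})\mathcal Q$ exhibits the operator of interest as the composition of the (possibly unbounded) operator $s\mathcal E_Q - {\mathcal A}$ with the bounded and boundedly invertible operator $\mathcal Q$. First I would check that $\mathcal Q$ maps the domain $\{x \mid \mathcal Q x \in D({\mathcal A})\}$ of $s\mathcal E - {\mathcal A}\mathcal Q$ bijectively onto $D({\mathcal A})$, which is precisely the domain of $s\mathcal E_Q - {\mathcal A}$ (the bounded term $\mathcal E_Q$ does not affect the domain). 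Since $\mathcal Q$ and $\mathcal Q^{-1}$ are bounded, the composition of a bounded bijection with an operator is boundedly invertible exactly when that operator is, and this gives the stated equivalence in both directions.

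For the second assertion I would write $A:=\mathcal E_1$ and $B:=\mathcal Q_1$ (both bounded and boundedly invertible) and establish the chain $A^*B$ coercive $\Leftrightarrow AB^{-1}$ coercive $\Leftrightarrow BA^{-1}$ coercive. For the first equivalence I would use the substitution $y=Bx$: since $AB^{-1}y = Ax$, one gets $\langle AB^{-1}y,y\rangle = \langle Ax,Bx\rangle$, and when $A^*B$ is self-adjoint this quantity is real and equals $\langle A^*Bx,x\rangle$. The algebraic identity $(AB^{-1})^* = (B^*)^{-1}A^*$ combined with the self-adjointness relation $A^*B=B^*A$ shows that $AB^{-1}$ is self-adjoint exactly when $A^*B$ is. Transporting coercivity across the substitution then uses only the bounds $\|B^{-1}y\|\geq \|B\|^{-1}\|y\|$ and $\|Bx\|\geq \|B^{-1}\|^{-1}\|x\|$, so that a coercivity constant $\delta$ for one operator yields the constant $\delta\|B\|^{-2}$ (respectively $\delta\|B^{-1}\|^{-2}$) for the other. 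For the second equivalence I would note $BA^{-1} = (AB^{-1})^{-1}$ and invoke the standard fact that a bounded, boundedly invertible, self-adjoint operator $T$ with $T\geq\delta I$ has inverse satisfying $T^{-1}\geq\|T\|^{-1}I$, so that coercivity passes from $T$ to $T^{-1}$ and back.

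These computations are elementary, which is why the paper calls the lemma immediate; the only points requiring care are bookkeeping rather than conceptual. For the first assertion the subtle issue is the domain: one must verify that $\mathcal Q$ restricts to a bijection between the two (dense, generally non-closed) domains, so that invertibility is neither lost nor gained at the level of domains, and the boundedness of $\mathcal E_Q$ is exactly what guarantees that $s\mathcal E_Q - {\mathcal A}$ has domain $D({\mathcal A})$. For the second assertion the thing to watch is that self-adjointness is tracked correctly before any positivity is asserted, since coercivity bundles both properties together; once self-adjointness is in hand, the positivity estimates follow immediately from the norm bounds on $B$ and $B^{-1}$.
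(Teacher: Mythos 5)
Your proposal is correct and follows essentially the same route as the paper, which declares the lemma immediate from the factorization~\eqref{eq:4i} together with the bounded invertibility of $\mathcal Q_1$ and $\mathcal Q_2$; your first part is exactly this factorization argument with the domain bookkeeping made explicit, and your second part (the substitution $y=\mathcal Q_1 x$, the self-adjointness transfer via $(\mathcal E_1\mathcal Q_1^{-1})^*=(\mathcal Q_1^*)^{-1}\mathcal E_1^*$, and the inversion $\mathcal Q_1\mathcal E_1^{-1}=(\mathcal E_1\mathcal Q_1^{-1})^{-1}$) supplies the routine verifications the paper leaves to the reader. No gaps.
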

From this lemma we see that if we want to check the regularity of $(\mathcal E,{\mathcal A} \mathcal Q)$, we may without loss of generality assume that $\mathcal Q_1$ and $\mathcal Q_2$ are the identity operators, and that $\mathcal E_1$ is coercive. We begin by showing that regularity implies that ${\mathcal A}$ is \emph{maximally dissipative},
i.e., it is dissipative and for all $s > 0$ the operator $sI -\mathcal A$ is surjective.
\begin{lemma}
\label{L2.1a} Consider an abstract adHDAE of the form~\eqref{eq:adae} satisfying
Assumption \ref{A1:HZ}. Then the operator ${\mathcal A}$ is maximally dissipative.
\end{lemma}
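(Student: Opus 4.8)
I want to show that under Assumption~\ref{A1:HZ} the operator $\mathcal A$ is maximally dissipative, i.e.\ dissipative (already granted by ii)) and such that $sI-\mathcal A$ is surjective for every $s>0$. The plan is to exploit the regularity condition iv), which gives bounded invertibility of $s\mathcal E-\mathcal A\mathcal Q$ for some $s\in\mathbb C^+$, and to translate this into surjectivity of $sI-\mathcal A$. By Lemma~\ref{L:2.1} I may normalize $\mathcal Q_1=I$, $\mathcal Q_2=I$, and $\mathcal E_1$ coercive, so that $\mathcal E=\mathcal E_Q=\left[\begin{smallmatrix}\mathcal E_1 & 0\\0&0\end{smallmatrix}\right]$ with $\mathcal E_1$ coercive, and condition iv) becomes: $s\mathcal E-\mathcal A$ is boundedly invertible for some $s\in\mathbb C^+$.

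**Strategy via the Lumer--Phillips circle of ideas.** The natural route is to first establish surjectivity of $sI-\mathcal A$ for one convenient value of $s>0$, and then bootstrap to all $s>0$. A dissipative operator for which some $sI-\mathcal A$ with $s>0$ is surjective is automatically maximally dissipative: dissipativity gives the a priori bound $\|(sI-\mathcal A)x\|\ge \operatorname{Re}(s)\,\|x\|$ for $x\in D(\mathcal A)$ and $\operatorname{Re}(s)>0$, so $sI-\mathcal A$ is injective with bounded inverse on its range, the range is closed, and a standard perturbation argument propagates surjectivity to a full half-plane containing the positive reals. Thus the crux is to produce surjectivity of $s_0 I-\mathcal A$ for a single $s_0>0$, and I would try to read this off from the regularity hypothesis iv).

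**Extracting surjectivity from regularity.** The difficulty is that the regular resolvent controls $s\mathcal E-\mathcal A$, not $sI-\mathcal A$, and $\mathcal E$ is singular (its lower block is $0$). I would solve $(sI-\mathcal A)\left[\begin{smallmatrix}x_1\\x_2\end{smallmatrix}\right]=\left[\begin{smallmatrix}y_1\\y_2\end{smallmatrix}\right]$ by comparing it with the invertible equation $(s\mathcal E-\mathcal A)\left[\begin{smallmatrix}x_1\\x_2\end{smallmatrix}\right]=\left[\begin{smallmatrix}z_1\\z_2\end{smallmatrix}\right]$. Writing $\mathcal A=\left[\begin{smallmatrix}\mathcal A_1\\\mathcal A_2\end{smallmatrix}\right]$, the target equation $sI-\mathcal A$ reads $sx_1-\mathcal A_1 x=y_1$, $sx_2-\mathcal A_2 x=y_2$, whereas $s\mathcal E-\mathcal A$ reads $s\mathcal E_1 x_1-\mathcal A_1 x=z_1$, $-\mathcal A_2 x=z_2$. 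Subtracting, the two problems differ by the bounded term $s(I-\mathcal E_1)x_1$ in the first block and by $sx_2$ in the second. The plan is to rewrite $sI-\mathcal A = (s\mathcal E-\mathcal A)+sB$ where $B=\left[\begin{smallmatrix}I-\mathcal E_1 & 0\\0 & I\end{smallmatrix}\right]$ is bounded, so $sI-\mathcal A=(s\mathcal E-\mathcal A)\bigl(I+s(s\mathcal E-\mathcal A)^{-1}B\bigr)$. For $s>0$ chosen so that iv) applies and small enough that $\|s(s\mathcal E-\mathcal A)^{-1}B\|<1$ (using that $(s\mathcal E-\mathcal A)^{-1}$ is bounded), the Neumann series shows $sI-\mathcal A$ is boundedly invertible, hence surjective, for that $s_0>0$.

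**Main obstacle and conclusion.** The main obstacle is precisely this comparison step: one must ensure the value of $s$ furnished by iv) can be taken real and positive and simultaneously small enough to make the perturbation a contraction; if iv) only yields a complex $s$, I would combine dissipativity (which forces $sI-\mathcal A$ to be injective with closed range for \emph{every} $s\in\mathbb C^+$) with the closedness-of-range argument so that invertibility at one point spreads along a connected set of $s$ in the right half-plane and in particular reaches the positive real axis. Once $s_0 I-\mathcal A$ is surjective for some $s_0>0$, dissipativity upgrades this to surjectivity of $sI-\mathcal A$ for all $s>0$ by the standard propagation argument, which is exactly the definition of maximal dissipativity. I would close by noting that $\mathcal A$ is then the generator of a contraction semigroup via Lumer--Phillips, foreshadowing the solution theory of Section~\ref{sec:3}.
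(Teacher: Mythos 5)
Your proof has a genuine gap at its central step. The Neumann-series argument requires $\|s(s\mathcal E-\mathcal A)^{-1}B\|<1$, and you propose to secure this by taking ``$s>0$ chosen so that iv) applies and small enough''. But Assumption~\ref{A1:HZ}.iv) furnishes \emph{one fixed} $s\in\mathbb C^+$; you have no freedom to shrink it. At the stage of this lemma you do not yet know that $s\mathcal E-\mathcal A\mathcal Q$ is invertible for any other value of $s$: that statement is part ii) of Theorem~\ref{t_commonQI}, whose proof itself invokes Lemma~\ref{L2.1a}, so appealing to it here would be circular. Moreover, even if invertibility for all small $s>0$ were granted, the smallness condition still would not follow: since $\mathcal E$ annihilates $\mathcal X_2$, dissipativity only gives $\mathrm{Re}\langle (s\mathcal E-\mathcal A)x,x\rangle\ge 0$ with no coercive lower bound, so $\|(s\mathcal E-\mathcal A)^{-1}\|$ is not controlled as $s\to 0$ (there is no analogue of $\|(sI-\mathcal A)^{-1}\|\le 1/\mathrm{Re}(s)$ for the singular pencil), and $\|s(s\mathcal E-\mathcal A)^{-1}B\|$ need not become small. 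Your fallback does not repair this: the open--closed/connectedness argument propagates invertibility within the family $\{sI-\mathcal A\}_{s\in\mathbb C^+}$, where the uniform resolvent bound is available, but it needs invertibility of $sI-\mathcal A$ at \emph{one} point to start from, and that starting point is exactly what the failed Neumann step was supposed to supply; invertibility of the different pencil $s_0\mathcal E-\mathcal A$ at a single $s_0$ cannot be ``spread'' into the family $sI-\mathcal A$.

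The paper's proof avoids the problem by making the perturbation parameter a free large number rather than the fixed $s$. Set $T:=\mathcal A-s\mathcal E_Q$ with $\mathcal E_Q=\left[\begin{smallmatrix}\mathcal E_1\mathcal Q_1^{-1} & 0\\ 0 & 0\end{smallmatrix}\right]$; by Lemma~\ref{L:2.1}, condition iv) says precisely that $T$ is boundedly invertible, and $T$ is dissipative because $\mathcal E_Q\ge 0$ and $\mathrm{Re}(s)>0$. A dissipative operator with $0$ in its resolvent set is maximally dissipative (Lemmas~\ref{L:A6} and~\ref{L:A4}: the resolvent set is open, so $\lambda I-T$ is boundedly invertible for some $\lambda>0$). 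Hence $\|(\lambda I-T)^{-1}\|\le 1/\lambda$ for all $\lambda>0$, and writing $\lambda I-\mathcal A=(\lambda I-T)\bigl(I-(\lambda I-T)^{-1}s\mathcal E_Q\bigr)$, your Neumann series now converges for every $\lambda>|s|\,\|\mathcal E_Q\|$: the smallness you needed comes for free from taking $\lambda$ large, not $s$ small, and one checks as in your proposal that the resulting preimages lie in $D(\mathcal A)$. Since $\mathcal A$ is dissipative and $\lambda I-\mathcal A$ is surjective for such $\lambda$, $\mathcal A$ is m-dissipative, hence maximally dissipative by Lemma~\ref{L:A4}. If you insert this pivot --- first maximal dissipativity of $\mathcal A-s\mathcal E_Q$, then a bounded perturbation with a free large $\lambda$ --- your argument becomes correct; as it stands, the key inequality is unjustified.
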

\proof
If $s \in {\mathbb C}^+$ and since $\mathcal E_1\mathcal Q_1^{-1}$ is coercive,  we have  (see Lemma \ref{L:2.1})  that ${\mathcal A} - s \hat{\mathcal E}$ is dissipative. Since by assumption ${\mathcal A} - s \hat{\mathcal E}$ is boundedly invertible,  by Lemmas~\ref{L:A6} and~\ref{L:A4} in the appendix we have that it is maximally dissipative. Since $s \hat{\mathcal E}$ is bounded this means that ${\mathcal A}$ is maximally dissipative.
%In particular, ${\mathcal A}$ is closed and densely defined.
\eproof

\medskip

\begin{theorem}
\label{t_commonQI} Consider a triple of operators $(\mathcal E, \mathcal A ,\mathcal Q)$, where we assume that these operators satisfy the first three conditions of Assumption \ref{A1:HZ}.
Then the following are equivalent.
\begin{itemize}
\item[i)] The
pair $(\mathcal E, {\mathcal A}{\mathcal Q})$ is regular.
\item[ii)] For all $s\in {\mathbb C}^+$  the operator $ s\mathcal E -\mathcal A \mathcal Q$ is boundedly invertible.
\item[iii)] There exists an $s \in {\mathbb C}^+$ such that the operator $ s\mathcal E -\mathcal A \mathcal Q$ is boundedly invertible.
\item [iv)] The operator ${\mathcal A}$ is maximally dissipative,  %$\ker({\mathcal E}) \cap \ker({\mathcal A}^*{\mathcal Q})=\{0\}$,
and there exists an $m_1>0$ such that
\begin{equation}
\label{eq:4-2}
  \left\|\left[\begin{array}{c} {\mathcal E} \\ {\mathcal A}{\mathcal Q}\end{array}\right] x\right\|   \geq m_1 \|x\| \mbox{ for all } {\mathcal Q} x \in D({\mathcal A}).
\end{equation}
\item [v)] The operator ${\mathcal A}$ is maximally dissipative, % $\ker({\mathcal E}{\mathcal Q}^{-1}) \cap \ker({\mathcal A}^*)=\{0\}$,
and there exists an $m_2>0$ such that
\begin{equation}
\label{eq:4-2-bis}
  \left\|\left[\begin{array}{c} {\mathcal E}{\mathcal Q}^{-1} \\ {\mathcal A}\end{array}\right] x\right\|  \geq m_2 \|x\| \mbox{ for all } x \in D({\mathcal A}).
\end{equation}
\end{itemize}
\end{theorem}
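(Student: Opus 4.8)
The plan is to route the whole theorem through the single pencil $T_s := s\mathcal E_Q - \mathcal A$, where $\mathcal E_Q = \mathcal E\mathcal Q^{-1} = \left[\begin{smallmatrix}\mathcal E_1\mathcal Q_1^{-1} & 0\\ 0&0\end{smallmatrix}\right]$ is bounded, self-adjoint, positive semidefinite, and coercive on $\mathcal X_1$ by Lemma~\ref{L:2.1}. That same lemma tells us $s\mathcal E - \mathcal A\mathcal Q$ is boundedly invertible iff $T_s$ is, so conditions (ii) and (iii) can be read off from $T_s$ alone. Several equivalences are then cheap and I would dispose of them first: (ii)$\,\Rightarrow\,$(iii) is trivial; (iv)$\,\Leftrightarrow\,$(v) follows from the substitution $w=\mathcal Q x$ together with $\mathcal E\mathcal Q^{-1}=\mathcal E_Q$ and the bounded invertibility of $\mathcal Q$; and (i)$\,\Leftrightarrow\,$(iii) holds because ``regular'' is by definition the existence of an invertible $s$ (exactly (iii)), while under (iii) all of Assumption~\ref{A1:HZ} is in force, so Lemma~\ref{L2.1a} supplies maximal dissipativity of $\mathcal A$. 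This leaves the essential loop (iii)$\,\Rightarrow\,$(v)$\,\Rightarrow\,$(ii).

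Next I would record two structural facts about $T_s$. Since $\mathcal E_Q\ge 0$ and $\mathrm{Re}(s)>0$, the operator $\mathcal A - s\mathcal E_Q$ is dissipative and is a bounded perturbation of the maximally dissipative $\mathcal A$, hence itself maximally dissipative (Lemmas~\ref{L:A4},~\ref{L:A6}); in particular $\mathbb C^+\subset\rho(\mathcal A - s\mathcal E_Q)$, i.e.\ $T_s+\mu I$ is boundedly invertible for every $\mu>0$. Secondly, $T_s$ is accretive: with $x=(x_1,x_2)$ and $c_1>0$ the coercivity constant,
\[
  \mathrm{Re}\langle T_s x, x\rangle = \mathrm{Re}(s)\langle \mathcal E_Q x, x\rangle - \mathrm{Re}\langle\mathcal A x, x\rangle \ge \mathrm{Re}(s)\,c_1\|x_1\|^2 \ge 0 .
\]

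For (iii)$\,\Rightarrow\,$(v) I would note that at the invertible value $s_0$ the operator $T_{s_0}$ is in particular bounded below, and the estimate $\|T_{s_0}x\| \le \sqrt{|s_0|^2+1}\,(\|\mathcal E_Q x\|^2+\|\mathcal A x\|^2)^{1/2}$ converts this directly into the lower bound (v), maximal dissipativity again coming from Lemma~\ref{L2.1a}. For the reverse transfer needed in (v)$\,\Rightarrow\,$(ii), I would show that (v) forces $T_s$ to be bounded below for \emph{every} $s\in\mathbb C^+$ by a normalized-sequence contradiction: if $\|x_n\|=1$ and $T_s x_n\to 0$, then the accretivity estimate gives $x_{n,1}\to 0$, hence $\mathcal E_Q x_n\to 0$ and $\mathcal A x_n = s\mathcal E_Q x_n - T_s x_n\to 0$, so $\|\mathcal E_Q x_n\|^2+\|\mathcal A x_n\|^2\to 0$, contradicting (v).

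The step I expect to be the main obstacle is upgrading ``bounded below'' to ``boundedly invertible'', since this requires surjectivity while the dissipative structure only furnishes resolvents at interior points $\mu>0$, not the boundary value $\mu=0$. The plan is a limiting argument. Accretivity gives $\|(T_s+\mu I)x\|^2 = \|T_sx\|^2 + 2\mu\,\mathrm{Re}\langle T_sx,x\rangle + \mu^2\|x\|^2 \ge \|T_sx\|^2 \ge c^2\|x\|^2$, so the inverses $T_s+\mu I$ are uniformly bounded, $\|(T_s+\mu I)^{-1}\|\le 1/c$. Hence for fixed $y$ the vectors $x_\mu := (T_s+\mu I)^{-1}y$ stay bounded and $T_s x_\mu = y - \mu x_\mu \to y$ as $\mu\downarrow 0$; since $T_s$ is closed ($\mathcal A$ closed, $s\mathcal E_Q$ bounded) and bounded below its range is closed, whence $y\in\mathrm{ran}(T_s)$. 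As $y$ is arbitrary, $T_s$ is surjective and therefore boundedly invertible at every $s\in\mathbb C^+$, which is (ii). Together with the cheap equivalences this closes (iii)$\,\Rightarrow\,$(v)$\,\Rightarrow\,$(ii)$\,\Rightarrow\,$(iii) and completes the theorem.
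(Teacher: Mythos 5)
Your proposal is correct, and it takes a genuinely different route from the paper on the one hard implication. The easy equivalences --- (ii)$\Rightarrow$(iii), (iii)$\Leftrightarrow$(i) via the definition of regularity plus Lemma~\ref{L2.1a}, and (iv)$\Leftrightarrow$(v) via the substitution $w=\mathcal Q x$ --- coincide with the paper's, and your (iii)$\Rightarrow$(v) is essentially the same resolvent-bound computation the paper uses for i)$\Rightarrow$iv), transported to the pencil $T_s=s\mathcal E\mathcal Q^{-1}-\mathcal A$. The real difference is in (v)$\Rightarrow$(ii). The paper rules out failure of invertibility by a three-case contradiction --- non-injectivity, non-dense range, dense-but-non-closed range --- and its non-dense-range case is the delicate one: it passes to the adjoint equation $(\overline{s}\mathcal E^*-\mathcal Q^*\mathcal A^*)x=0$, exploits dissipativity of $\mathcal A^*$, and needs the auxiliary element $y_x$ solving $(I-\mathcal A)y_x=x$ to force $x=0$. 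You bypass the adjoint entirely: your normalized-sequence argument (essentially the paper's case (c)) gives the lower bound $\|T_sx\|\geq c\|x\|$ for every $s\in\mathbb C^+$, and surjectivity then comes from the resolvent-limit argument $\mu\downarrow 0$, available because $\mathcal A-s\mathcal E\mathcal Q^{-1}$ is maximally dissipative and because accretivity gives the uniform estimate $\|(T_s+\mu I)x\|\geq\|T_sx\|\geq c\|x\|$; closedness of $T_s$ plus the lower bound makes its range closed, so the limit $y$ of $T_sx_\mu$ lands in the range. This is shorter and avoids duality altogether, at the price of invoking the fact that a dissipativity-preserving bounded perturbation of a maximally dissipative operator is again maximally dissipative. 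One point to tighten: that fact is not what Lemmas~\ref{L:A4} and~\ref{L:A6} state --- they concern a single dissipative operator --- so you should justify it, e.g.\ by the Neumann-series factorization $\lambda I-\mathcal A+s\mathcal E\mathcal Q^{-1}=(\lambda I-\mathcal A)\bigl(I+(\lambda I-\mathcal A)^{-1}s\mathcal E\mathcal Q^{-1}\bigr)$ for $\lambda>\|s\mathcal E\mathcal Q^{-1}\|$, which shows that $\mathcal A-s\mathcal E\mathcal Q^{-1}$ is m-dissipative, followed by Lemma~\ref{L:A4}. Since the paper itself uses exactly this perturbation fact (in the reverse direction) inside the proof of Lemma~\ref{L2.1a}, your argument operates at the same level of rigor once this citation is patched.
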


\begin{proof}
It is clear that ii) implies iii), and iii) implies that $(\mathcal E, {\mathcal A}{\mathcal Q})$ is regular, and thus iii) implies i).
So we start by proving that i) implies iv).
By the given assumptions %in the theorem
and since i) holds, Assumption \ref{A1:HZ} holds. Thus Lemma \ref{L2.1a} gives that ${\mathcal A}$ is maximally dissipative. In particular it is densely defined and closed.

Let $s \in {\mathbb C}$ be such that $s{\mathcal E} - {\mathcal A} {\mathcal Q}$ is boundedly invertible, then for $x \in D({\mathcal A} {\mathcal Q})$
\begin{align*}
  \|x\| =&\ \|(s{\mathcal E} - {\mathcal A} {\mathcal Q})^{-1} (s{\mathcal E} - {\mathcal A} {\mathcal Q})x \| \leq M\|(s{\mathcal E} - {\mathcal A} {\mathcal Q})x\| \\
  =&\  M \left\| \begin{bmatrix} sI & -I \end{bmatrix} \begin{bmatrix} {\mathcal E} \\ {\mathcal A}{\mathcal Q}\end{bmatrix} x \right\| \leq M M_1 \left\| \begin{bmatrix} {\mathcal E} \\  {\mathcal A}{\mathcal Q}\end{bmatrix} x \right\|.
\end{align*}
Since both $M = \|(s{\mathcal E} - {\mathcal A} {\mathcal Q})^{-1}\|$ and $M_1 = \left\| \begin{bmatrix} sI & -I \end{bmatrix} \right\|$ are nonzero, (\ref{eq:4-2}) follows. It remains to show that $\ker({\mathcal E}) \cap \ker({\mathcal A}^*{\mathcal Q})=\{0\}$. Let $x\in \ker({\mathcal E}) \cap \ker({\mathcal A}^*{\mathcal Q})$, then $(\overline{s} {\mathcal Q}^* {\mathcal E} - {\mathcal Q}^*{\mathcal A}^*{\mathcal Q})x =0$. Since ${\mathcal Q}^* {\mathcal E}$ is self-adjoint, this is the same as $(\overline{s}  {\mathcal E}^*{\mathcal Q} - {\mathcal Q}^*{\mathcal A}^*{\mathcal Q})x =0$, and so
\[
  \langle {\mathcal Q} x, (s\mathcal E-\mathcal A\mathcal Q)y\rangle=0 \mbox{ for all } {\mathcal Q}y \in D(\mathcal A).
\]
Since $s\mathcal E-\mathcal A\mathcal Q$ is boundedly invertible its range equals ${\mathbb X}$ and thus ${\mathcal Q} x=0$, and since ${\mathcal Q}$ is boundedly invertible, $x=0$.
\smallskip

Since ${\mathcal Q}$ is boundedly invertible it is easy to see that items iv) and v) are equivalent. So it remains to show that iv) implies ii). To prove this, %at iv) implies ii),
suppose that (\ref{eq:4-2}) holds, but that $s\mathcal E-\mathcal A\mathcal Q$ is not boundedly invertible for some $s \in {\mathbb C}$ with positive real part. Then we have the following possibilities:
\begin{enumerate}
\item[(a)] The operator $s\mathcal E-\mathcal A \mathcal Q$ is not injective.
\item[(b)] The range of $s\mathcal E-\mathcal A\mathcal Q$ is not dense in {$\mathbb X$}.
\item[(c)] The range of $s\mathcal E-\mathcal A \mathcal Q$ is dense but not equal to {$\mathbb X$}.
\end{enumerate}

We will show that neither of these options is valid.

Case (a): If  there exists  $0\neq x \in D(\mathcal A\mathcal Q)$ such that $(s\mathcal E-\mathcal A\mathcal Q)x=0$ then consider  $\langle (s\mathcal E-\mathcal A\mathcal Q)x, {\mathcal Q}x \rangle$ which is zero,  and thus
\[
  0 = s \langle \mathcal Ex, {\mathcal Q} x \rangle  -\langle \mathcal A{\mathcal Q} x,{\mathcal Q} x \rangle = s \langle \mathcal E {\mathcal Q}^{-1} \mathcal Q x, {\mathcal Q} x \rangle  -\langle \mathcal A{\mathcal Q} x,{\mathcal Q} x \rangle .
\]
Since Re$(s) >0$, ${\mathcal E}{\mathcal Q}^{-1}$ is non-negative (see Lemma \ref{L:2.1}) and since $\mathcal A$ is dissipative, taking the real part gives that
\[
  0 = \langle \mathcal E {\mathcal Q}^{-1} \mathcal Q x, {\mathcal Q} x \rangle.
\]
Since $\mathcal E {\mathcal Q}^{-1}$ is a non-negative bounded operator, this gives that ${\mathcal Q}x \in \ker (\mathcal E {\mathcal Q}^{-1})$, or equivalently $x\in \ker (\mathcal E)$.

Applying this in  equation $(s\mathcal E-\mathcal A\mathcal Q)x=0$ gives $\mathcal A \mathcal Q x=0$. So we have shown that $x\neq 0$ lies in $\ker(\mathcal E) \cap \ker(\mathcal A \mathcal Q )$, which is a contradiction to (\ref{eq:4-2}).
\smallskip

Case (b): If there exists  $0\neq x \in {\mathbb X}$ such that
\begin{equation}
\label{eq:caseb}
    \langle x, (s\mathcal E-\mathcal A\mathcal Q)y\rangle=0 \mbox{ for all } {\mathcal Q}y \in D(\mathcal A),
\end{equation}
then $x$ lies in the domain of the dual operator, i.e., in $D((s\mathcal E-\mathcal A\mathcal Q)^*)$, which equals $D({\mathcal A}^*)$. Furthermore, since $D({\mathcal A})$ is dense in ${\mathbb X}$, (\ref{eq:caseb}) implies that
$0=(s\mathcal E-\mathcal A\mathcal Q)^*x = (\overline{s}{\mathcal E}^* -{\mathcal Q}^*{\mathcal A}^*)x$.
Writing $x= {\mathcal Q} z$ and using the fact that ${\mathcal A}$ is maximally dissipative, and thus ${\mathcal A}^*$ is dissipative, we can proceed as in case (a) to obtain that ${\mathcal E}^*{\mathcal Q}z=0$ and ${\mathcal Q}^* {\mathcal A}^*{\mathcal Q}z=0$,
or equivalently ${\mathcal E}^*x=0$ and ${\mathcal Q}^* {\mathcal A}^*x=0$. Since ${\mathcal Q}^* $ is boundedly invertible, this gives ${\mathcal A}^*x=0$. The latter gives that
\begin{equation}
\label{eq:37-i}
   \langle x, \mathcal A y\rangle=0 \mbox{ for all } y \in D(\mathcal A).
\end{equation}
Since ${\mathcal A}$ is maximally dissipative, we know that there exists  $y_x\in D({\mathcal A})$ such that
\begin{equation}
\label{eq:37-ii}
  (I- {\mathcal A}) y_x = x.
\end{equation}
Substituting this in (\ref{eq:37-i}) with $y=y_x$ gives
\[
  0= \langle x, {\mathcal A}y_x \rangle
  =
  \langle (I- {\mathcal A}) y_x, {\mathcal A} y_x \rangle = \langle y_x, {\mathcal A}y_x \rangle - \langle {\mathcal A} y_x, {\mathcal A}y_x \rangle.
\]
The last inner product is obviously real and non-positive. The real part of the first term is also non-positive, and thus both terms must be zero. This gives in particular that ${\mathcal A} y_x =0$, and by (\ref{eq:37-ii}) that $y_x=x$.
Note that we still have that ${\mathcal E}^*x=0$.

Since ${\mathcal Q}$ is boundedly invertible, we can define $\tilde{x} ={\mathcal Q}^{-1} x$, and so $\tilde{x} \in \ker({\mathcal A}{\mathcal Q}) \cap \ker({\mathcal E}^*{\mathcal Q})$.
Since ${\mathcal E}^*{\mathcal Q}$ is self-adjoint, this implies that $\tilde{x} \in \ker ({\mathcal Q}^*{\mathcal E})$, but since ${\mathcal Q}$ is boundedly invertible $\tilde{x} \in \ker ({\mathcal E})$. So substituting $\tilde{x}$ in (\ref{eq:4-2}) gives that $\tilde{x} =0$ and hence $x=0$, which is in contradiction to our assumption $x \neq 0$.
\smallskip

Case (c): Let $s \in {\mathbb C}^+$  be given and let ${\mathcal Q} x_n$ be a sequence in $D(\mathcal A)$ such that $(s\mathcal E-\mathcal A\mathcal Q)x_n \rightarrow z$ as $n \rightarrow \infty$ with $z\in\mathcal  X$, but not in the range of $s\mathcal E-\mathcal A\mathcal Q$. Then by defining $x_{n,m} = x_n - x_m$, we have
\begin{equation}
\label{eq:4-3}
  (s\mathcal E-\mathcal A\mathcal Q)x_{n,m} \rightarrow 0 \mbox{ as } n,m \rightarrow \infty.
\end{equation}
If $\|x_{n,m}\| \rightarrow 0$ as $n,m \rightarrow \infty$, then $x_n$ would be a Cauchy sequence, and thus converge to some  $x$. In that case $z = (s\mathcal E-\mathcal A\mathcal Q)x$, and thus in the range of $(s\mathcal E-\mathcal A\mathcal Q)x$. Hence in this case we have a contradiction. So we assume that  $\|x_{n,m}\|$ stays bounded away from zero for some sequence of indices $\{n,m\}$. In the remainder of the proof we consider this sequence.

Taking the inner product  of \eqref{eq:4-3} with $\frac{{\mathcal Q} x_{n,m}}{\|x_{n,m}\|}$, gives
\[
  0 = \lim_{n,m \rightarrow \infty} \left[ s \langle \mathcal Ex_{n,m}, \frac{{\mathcal Q}x_{n,m}}{\|x_{n,m}\|}\rangle  -\langle \mathcal A {\mathcal Q} x_{n,m} , \frac{{\mathcal Q}x_{n,m}}{\|x_{n,m}\|}\rangle  \right].
\]
Since Re$(s) >0$ and ${\mathcal Q}^*{\mathcal E}$ is self-adjoint, taking the real part gives
\[
  0 = \lim_{n,m \rightarrow \infty}  \left[ \mathrm{Re}(s) \langle {\mathcal Q}^* \mathcal Ex_{n,m}, \frac{x_{n,m}}{\|x_{n,m}\|}\rangle   - \mathrm{Re}\left(\langle \mathcal A{\mathcal Q} x_{n,m} ,\frac{{\mathcal Q}x_{n,m}}{\|x_{n,m}\|} \rangle\right)\right].
\]
Both terms are nonnegative and since ${\mathcal Q}^* \mathcal E\geq 0$ we find that
\begin{equation}
  \label{eq:4-4}
   \lim_{n,m \rightarrow \infty}{\mathcal Q}^* \mathcal  E \frac{x_{n,m}}{\sqrt{\|x_{n,m}\|}} =0 \Rightarrow \lim_{n,m \rightarrow \infty}\mathcal  E\frac{x_{n,m}}{\sqrt{\|x_{n,m}\|}} = 0,
 \end{equation}
 where we have used that ${\mathcal Q}$ is boundedly invertible.
 Applying this in (\ref{eq:4-3}) and using that $\|x_{n,m}\|$ stays bounded away from zero, we find that
\begin{equation}
 \label{eq:4-5}
   \lim_{n,m \rightarrow \infty} {\mathcal A} {\mathcal Q} \frac{x_{n,m}}{\sqrt{\|x_{n,m}\|}} =0.
\end{equation}
Define $z_{n,m} = \frac{x_{n,m}}{\sqrt{\|x_{n,m}\|}} $, then by (\ref{eq:4-3}) and (\ref{eq:4-4}), equation (\ref{eq:4-2}) implies that $z_{n,m} \rightarrow 0$. However, by (\ref{eq:4-2}) this gives that
\[
  \lim_{n,m \rightarrow \infty} \|z_{n,m}\| =0
\]
which is equivalent to $\sqrt{\|x_{n,m}\|} \rightarrow 0$, which is a contradiction.

So we see that neither of the cases $(a)$, $(b)$, or $(c)$ is possible, and hence item ii) holds.
\hfill\eproof
\end{proof}
\medskip

From Theorem~\ref{t_commonQI} we can derive some easy consequences, but we begin by showing that the conditions as stated in item iv) and v) can be simplified when ${\mathcal A}$ has more structure.
\begin{lemma}
\label{L:A=J-R}
Consider a triple of operators $(\mathcal E, \mathcal A ,\mathcal Q)$ that satisfy the first three conditions of Assumption \ref{A1:HZ}.
 Assume further that ${\mathcal A}$ can be written as ${\mathcal A}=\mathcal J-\mathcal R$, with
$\mathcal J$ skew-adjoint, i.e., $\mathcal J^*=-\mathcal J$ and $\mathcal R$ is bounded, self-adjoint and non-negative, then item v) in Theorem \ref{t_commonQI} is equivalent to%can be replaced by
\begin{itemize}
\item[v')]
  There exists an $m_2>0$ such that
  \begin{equation}
  \label{eq:4-2-bis2}
  \left\|\left[\begin{array}{c} {\mathcal E}{\mathcal Q}^{-1} \\ {\mathcal J}\\ {\mathcal R}
  \end{array}\right] x\right\|  \geq m_2 \|x\| \mbox{ for all } x \in D({\mathcal J}).
\end{equation}
\end{itemize}
\end{lemma}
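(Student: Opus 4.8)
The plan is to prove, under the extra structure $\mathcal A = \mathcal J - \mathcal R$, that v$'$) is equivalent to v), so that v$'$) may replace v) in the chain of equivalences of Theorem~\ref{t_commonQI}. Two points have to be settled: that the maximal dissipativity of $\mathcal A$, which is part of v) but absent from v$'$), comes for free from the structure; and that the lower bounds \eqref{eq:4-2-bis} and \eqref{eq:4-2-bis2} are equivalent up to the value of the constant. Since $\mathcal R$ is bounded we have $D(\mathcal A) = D(\mathcal J)$, so the two inequalities range over the same set of vectors.

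For the maximal dissipativity I would note first that $\mathcal A + \mathcal A^* = -2\mathcal R \le 0$, so $\mathcal A$ is dissipative, and then that for $\lambda > \|\mathcal R\|$ one can factor $\lambda I - \mathcal A = (\lambda I - \mathcal J)\bigl(I + (\lambda I - \mathcal J)^{-1}\mathcal R\bigr)$. As $\mathcal J$ is skew-adjoint we have $\|(\lambda I - \mathcal J)^{-1}\| \le 1/\lambda$, so the second factor is boundedly invertible by a Neumann series and $\lambda I - \mathcal A$ is onto. By Lemmas~\ref{L:A6} and~\ref{L:A4} this already forces $\mathcal A$ to be maximally dissipative, independently of whether v$'$) holds, so v$'$) carries the maximal-dissipativity clause of v) for free.

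The crux is the equivalence of the two inequalities, for which the key estimate is
\[
  \|\mathcal R x\|^2 \le \|\mathcal R\|\,\|\mathcal A x\|\,\|x\|, \qquad x \in D(\mathcal J).
\]
To obtain it I would take real parts in $\langle \mathcal A x, x\rangle = \langle \mathcal J x, x\rangle - \langle \mathcal R x, x\rangle$: skew-adjointness of $\mathcal J$ gives $\mathrm{Re}\langle \mathcal J x, x\rangle = 0$, hence $\langle \mathcal R x, x\rangle = -\mathrm{Re}\langle \mathcal A x, x\rangle \le \|\mathcal A x\|\,\|x\|$, and then $\|\mathcal R x\|^2 = \|\mathcal R^{1/2}(\mathcal R^{1/2}x)\|^2 \le \|\mathcal R\|\,\langle \mathcal R x, x\rangle$ because $\mathcal R \ge 0$ is bounded. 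The easy direction v) $\Rightarrow$ v$'$) then follows from $\|\mathcal A x\|^2 = \|\mathcal J x - \mathcal R x\|^2 \le 2\|\mathcal J x\|^2 + 2\|\mathcal R x\|^2$, which shows the left-hand side of \eqref{eq:4-2-bis} is at most $\sqrt 2$ times that of \eqref{eq:4-2-bis2}; thus v) with constant $m_2$ gives v$'$) with constant $m_2/\sqrt 2$.

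The main obstacle is the converse v$'$) $\Rightarrow$ v), where the cancellation in $\mathcal A = \mathcal J - \mathcal R$ prevents bounding $\|\mathcal J x\|$ and $\|\mathcal R x\|$ termwise by $\|\mathcal A x\|$, and it is the estimate above that supplies the missing control. Writing $\mathcal J x = \mathcal A x + \mathcal R x$ and using the key estimate one gets
\[
  \|\mathcal J x\|^2 + \|\mathcal R x\|^2 \le 2\|\mathcal A x\|^2 + 3\|\mathcal R\|\,\|\mathcal A x\|\,\|x\|,
\]
so the squared left-hand side of \eqref{eq:4-2-bis2} is at most twice that of \eqref{eq:4-2-bis} plus $3\|\mathcal R\|\,\|\mathcal A x\|\,\|x\|$. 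I would finish by contradiction: if \eqref{eq:4-2-bis} failed for every constant, there would be a normalized sequence $x_n$ along which its left-hand side tends to $0$; then the key estimate forces $\|\mathcal R x_n\| \to 0$ and hence $\|\mathcal J x_n\| \to 0$, so the left-hand side of \eqref{eq:4-2-bis2} also tends to $0$, contradicting v$'$). Alternatively, feeding the bound of v$'$) into the displayed inequality and solving the resulting quadratic in the left-hand side of \eqref{eq:4-2-bis} yields the explicit constant $m_2 = \tfrac14\bigl(\sqrt{9\|\mathcal R\|^2 + 8(m_2')^2} - 3\|\mathcal R\|\bigr) > 0$, where $m_2'$ denotes the constant in v$'$).
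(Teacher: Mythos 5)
Your proposal is correct, and its core coincides with the paper's own argument: both proofs hinge on the observation that skew-adjointness makes $\mathrm{Re}\langle \mathcal J x,x\rangle$ vanish, so that $\langle \mathcal R x,x\rangle$ is controlled by $\|\mathcal A x\|\,\|x\|$, and then boundedness and non-negativity of $\mathcal R$ upgrade this to control of $\|\mathcal R x\|$ itself; your contradiction finish with a normalized sequence $x_n$ is exactly the paper's proof of both implications, and your ``key estimate'' $\|\mathcal R x\|^2 \le \|\mathcal R\|\,\|\mathcal A x\|\,\|x\|$ is just its quantitative form. You differ from the paper in two respects. First, for the maximal dissipativity of $\mathcal A$ (the clause of v) that is absent from v$'$)), the paper argues via the adjoint criterion: since $\mathcal R$ is bounded, $D(\mathcal A)=D(\mathcal A^*)$ and $\mathcal A^*=-\mathcal J-\mathcal R$ is also dissipative, so $\mathcal A$ is maximally dissipative by the Lumer--Phillips characterization (Theorem \ref{T:A5}); you instead factor $\lambda I-\mathcal A=(\lambda I-\mathcal J)\bigl(I+(\lambda I-\mathcal J)^{-1}\mathcal R\bigr)$ and invoke a Neumann series for $\lambda>\|\mathcal R\|$ to get surjectivity directly, then apply Lemma \ref{L:A4}. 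Both routes are valid and of comparable length; the paper's is softer, yours is more self-contained in that it exhibits the resolvent. Second, your argument is quantitative: it tracks explicit constants ($m_2/\sqrt2$ in one direction, and $\tfrac14\bigl(\sqrt{9\|\mathcal R\|^2+8(m_2')^2}-3\|\mathcal R\|\bigr)$ in the other, both of which I checked), whereas the paper's sequence arguments yield only the existence of some positive constant. This buys nothing for the lemma as stated, but it would matter if one wanted uniform bounds over families of operators $(\mathcal E,\mathcal J,\mathcal R,\mathcal Q)$.
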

\begin{proof} Assume that v) holds, then we only have to show that (\ref{eq:4-2-bis}) implies (\ref{eq:4-2-bis2}). If (\ref{eq:4-2-bis2}) would not hold, then there exits a sequence $\{x_n\}, n \in {\mathbb N}$ such that, $x_n \in D({\mathcal J})$, $\|x_n\|=1$ and ${\mathcal E}{\mathcal Q}^{-1}x_n, {\mathcal J}x_n$ and ${\mathcal R}
   x_n$ all converge to zero. This implies that ${\mathcal E}{\mathcal Q}^{-1}x_n$ and ${\mathcal A}x_n = ({\mathcal J} - {\mathcal R})x_n$ converge to zero, which is a contradiction.
   \smallskip

Next we assume that v') holds. Then, since
%${\mathcal A}$ is given as
${\mathcal A} = {\mathcal J} - {\mathcal R}$, with ${\mathcal R}$ is bounded and non-negative, and ${\mathcal J}$  skew-adjoint, we have that $D({\mathcal A})= D({\mathcal A}^*)$ and both ${\mathcal A}$ and ${\mathcal A}^* = -{\mathcal J} - {\mathcal R}$ are dissipative, which implies that ${\mathcal A}$ is maximally dissipative.

Let $x_n\in D({\mathcal J})$ be of norm one, and assume that ${\mathcal A}x_n \rightarrow 0$ as $n \rightarrow \infty$, then
\[
  \langle x_n, ({\mathcal J}- {\mathcal R})x_n \rangle \rightarrow 0
\]
Taking the real part of this expression gives that $\langle x_n,{\mathcal R}x_n \rangle \rightarrow 0$. Since ${\mathcal R}$ is non-negative and bounded, this implies that ${\mathcal R}x_n \rightarrow 0$, see \cite[Lemma A.3.88.c]{CuZw20}.  Combining this with ${\mathcal A}x_n \rightarrow 0$ gives that ${\mathcal J}x_n \rightarrow 0$. Hence from this we conclude that (\ref{eq:4-2-bis2}) implies (\ref{eq:4-2-bis}).
%Very similarly we have that (\ref{eq:4-2-bis2}) implies $\ker({\mathcal E}{\mathcal Q}^{-1}) \cap \ker({\mathcal A}^*) =\{0\}$, and thus v') implies v).
\hfill\eproof
\end{proof}
\medskip

Note that similarly, the condition iv) in Theorem~\ref{t_commonQI} can be replaced.
Note further that for matrices or bounded operators a dissipative ${\mathcal A}$ can always be written as ${\mathcal A}=\mathcal J-\mathcal R$, with
$\mathcal J$ skew-adjoint and $\mathcal R$ non-negative.

The result of Lemma \ref{L:A=J-R} also holds when ${\mathcal A} =\mathcal J-\mathcal R$, with
$\mathcal J$ skew-adjoint and bounded and $\mathcal R$ self-adjoint and non-negative. In that case we have $D({\mathcal A})=D({\mathcal R})$.

Given the special form of ${\mathcal E}$ and ${\mathcal Q}$ the following is an easy consequence of Theorem \ref{t_commonQI}.
\begin{corollary}
\label{C:8}
Consider an adHDAE of the form~\eqref{eq:1HZ} satisfying
  the  conditions i)-iii) in Assumption \ref{A1:HZ} and define
  \begin{equation}
\label{eq:5}
  \mathcal E_I := \begin{bmatrix} I & 0  \\ 0& 0 \end{bmatrix}.
\end{equation}
  Then the following are equivalent.
\begin{enumerate}
\item [i)]
  There exists an $s \in {\mathbb C}^+$ such that the operator $s\mathcal E-{\mathcal A}\mathcal Q$  is boundedly invertible.
\item [ii)]
  There exists an $s \in {\mathbb C}^+$ such that the operator $s\mathcal E_I-{\mathcal A}$  is boundedly invertible.
\end{enumerate}
\end{corollary}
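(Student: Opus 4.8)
The plan is to reduce both conditions to the quantitative lower bound of item v) in Theorem~\ref{t_commonQI} and then to observe that the two resulting estimates differ only by the bounded, boundedly invertible factor $\mathcal E_1\mathcal Q_1^{-1}$, so that they are equivalent.

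First I would verify that the triple $(\mathcal E_I,{\mathcal A},I)$ also satisfies conditions i)--iii) of Assumption~\ref{A1:HZ}: the state space and the dissipativity of ${\mathcal A}$ are unchanged, and in the decomposition \eqref{eq:2HZ-bis} we now read off $\mathcal E_1=I$, $\mathcal Q_1=\mathcal Q_2=I$, all bounded and boundedly invertible, with $\mathcal E_1^*\mathcal Q_1=I$ trivially coercive. Hence Theorem~\ref{t_commonQI} is applicable both to $(\mathcal E,{\mathcal A},\mathcal Q)$ and to $(\mathcal E_I,{\mathcal A},I)$.

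Next I would apply the equivalence i)$\Leftrightarrow$v) of Theorem~\ref{t_commonQI} to each triple. For $(\mathcal E,{\mathcal A},\mathcal Q)$ one computes $\mathcal E\mathcal Q^{-1}=\left[\begin{smallmatrix}\mathcal E_1\mathcal Q_1^{-1}&0\\0&0\end{smallmatrix}\right]=\mathcal E_Q$, so condition i) of the corollary is equivalent to: ${\mathcal A}$ is maximally dissipative and there is an $m_2>0$ with $\left\|\left[\begin{smallmatrix}\mathcal E_Q\\{\mathcal A}\end{smallmatrix}\right]x\right\|\ge m_2\|x\|$ for all $x\in D({\mathcal A})$. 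For $(\mathcal E_I,{\mathcal A},I)$, condition ii) of the corollary is likewise equivalent to: ${\mathcal A}$ is maximally dissipative and there is an $m_2'>0$ with $\left\|\left[\begin{smallmatrix}\mathcal E_I\\{\mathcal A}\end{smallmatrix}\right]x\right\|\ge m_2'\|x\|$ for all $x\in D({\mathcal A})$. Since both characterizations share the clause that ${\mathcal A}$ is maximally dissipative and refer to the same domain $D({\mathcal A})$, it remains only to show that the two lower bounds are equivalent. Here I would use that $\mathcal E_1\mathcal Q_1^{-1}$ is coercive, hence (by Lemma~\ref{L:2.1}) bounded and boundedly invertible: writing $x=\left[\begin{smallmatrix}x_1\\x_2\end{smallmatrix}\right]$ we have $\mathcal E_Q x=(\mathcal E_1\mathcal Q_1^{-1}x_1,0)$ and $\mathcal E_I x=(x_1,0)$, so there are constants $0<c\le C$ with $c\|\mathcal E_I x\|\le\|\mathcal E_Q x\|\le C\|\mathcal E_I x\|$ for all $x$. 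Squaring and adding $\|{\mathcal A}x\|^2$ to each side then gives
\[
\min(c^2,1)\left\|\begin{bmatrix}\mathcal E_I\\{\mathcal A}\end{bmatrix}x\right\|^2
\le\left\|\begin{bmatrix}\mathcal E_Q\\{\mathcal A}\end{bmatrix}x\right\|^2
\le\max(C^2,1)\left\|\begin{bmatrix}\mathcal E_I\\{\mathcal A}\end{bmatrix}x\right\|^2,
\]
so one lower bound holds precisely when the other does (with an adjusted constant). This yields i)$\Leftrightarrow$ii).

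The step I expect to require the most care is the last one: the equivalence of the two estimates rests on $\mathcal E_1\mathcal Q_1^{-1}$ being \emph{boundedly invertible} as well as bounded, that is, on the full strength of the coercivity assumption in Assumption~\ref{A1:HZ}.iii). If $\mathcal E_1\mathcal Q_1^{-1}$ were merely bounded and nonnegative, the lower estimate $c\|\mathcal E_I x\|\le\|\mathcal E_Q x\|$ could fail, and replacing the coercive block by the identity would no longer preserve regularity. (An alternative, essentially equivalent route is to invoke Lemma~\ref{L:2.1} directly to replace $\mathcal E$ by $\mathcal E_Q$ and then compare $\mathcal E_Q$ with $\mathcal E_I$ in the same way, but it leads to the identical coercivity-based comparison.)
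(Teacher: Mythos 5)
Your proposal is correct and takes essentially the same route as the paper: the paper likewise invokes Theorem~\ref{t_commonQI} and observes that ${\mathcal E}_I=\left[\begin{smallmatrix}{\mathcal Q}_1{\mathcal E}_1^{-1}&0\\0&I\end{smallmatrix}\right]{\mathcal E}{\mathcal Q}^{-1}$, so that ${\mathcal E}{\mathcal Q}^{-1}$ and ${\mathcal E}_I$ differ by a bounded, boundedly invertible factor and may be interchanged in the lower bounds of items iv) and v); you merely spell out the norm comparison and the verification of Assumption~\ref{A1:HZ}.i)--iii) for the triple $(\mathcal E_I,{\mathcal A},I)$ explicitly. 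One small correction to your closing remark: the bounded invertibility of $\mathcal E_1\mathcal Q_1^{-1}$ is already guaranteed by the separate hypotheses that $\mathcal E_1$ and $\mathcal Q_1$ are boundedly invertible (this is what the paper uses), so it does not rest on the coercivity of $\mathcal E_1^*\mathcal Q_1$.
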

\begin{proof}
  From Theorem \ref{t_commonQI}
  %we see that
  we have to show that we may replace ${\mathcal E}{\mathcal Q}^{-1}$ by ${\mathcal E}_I$. This follows since
  \[
    {\mathcal E}_I= \begin{bmatrix} I & 0  \\ 0& 0 \end{bmatrix} = \begin{bmatrix} {\mathcal Q}_1{\mathcal E}_1^{-1} & 0  \\ 0& I \end{bmatrix} \begin{bmatrix} {\mathcal E}_1{\mathcal Q}_1^{-1}  & 0  \\ 0& 0 \end{bmatrix},
    %= {\mathcal E}{\mathcal Q}^{-1},
  \]
  where we have used the invertiblity of ${\mathcal E}_1$ and ${\mathcal Q}$. So ${\mathcal E}{\mathcal Q}^{-1}$ and ${\mathcal E}_I$ are boundedly invertible related to each other, and this implies that in Theorem \ref{t_commonQI} part iv) and v) we may do the replacements.
\hfill\eproof
\end{proof}
\medskip

We have shown that the regularity of the pair $(\mathcal E,{\mathcal A} \mathcal Q)$ is equivalent to that of $(\mathcal E_I, {\mathcal A})$. However, this may still be a difficult condition to check. In the following lemma we derive conditions under which  this follows from the maximal dissipativity of ${\mathcal A}$.
\begin{lemma}
\label{L:9}Consider an adHDAE of the form~\eqref{eq:1HZ} satisfying   the  conditions i)-iii) in Assumption~\ref{A1:HZ}.
 If there exists an $\varepsilon >0$ such that ${\mathcal A} + \varepsilon \begin{bmatrix} 0 & 0  \\ 0& I \end{bmatrix}$ is maximally dissipative, then $(\mathcal E,{\mathcal A} \mathcal Q)$ is regular.
\end{lemma}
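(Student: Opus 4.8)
The plan is to reduce the regularity question to a single resolvent point via Corollary~\ref{C:8}. By that corollary, under conditions i)--iii) of Assumption~\ref{A1:HZ} the pair $(\mathcal E,{\mathcal A}\mathcal Q)$ is regular if and only if there is some $s\in{\mathbb C}^+$ for which $s\mathcal E_I-{\mathcal A}$ is boundedly invertible, with $\mathcal E_I=\left[\begin{smallmatrix} I & 0 \\ 0& 0\end{smallmatrix}\right]$. So it suffices to produce one such $s$, and the hypothesis of the lemma hands us a natural candidate.

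First I would abbreviate the perturbed operator by $\mathcal B:={\mathcal A}+\varepsilon\left[\begin{smallmatrix} 0 & 0 \\ 0& I\end{smallmatrix}\right]$, which is maximally dissipative by assumption and satisfies $D(\mathcal B)=D({\mathcal A})$ because the perturbation is bounded. The key observation is a purely algebraic cancellation obtained by choosing $s=\varepsilon$: since ${\mathcal A}=\mathcal B-\varepsilon\left[\begin{smallmatrix} 0 & 0 \\ 0& I\end{smallmatrix}\right]$, we get
\[
  \varepsilon\mathcal E_I-{\mathcal A}=\varepsilon\begin{bmatrix} I & 0 \\ 0& 0\end{bmatrix}+\varepsilon\begin{bmatrix} 0 & 0 \\ 0& I\end{bmatrix}-\mathcal B=\varepsilon I-\mathcal B.
\]
Thus the awkward singular operator $\varepsilon\mathcal E_I$ exactly complements the added block $\varepsilon\left[\begin{smallmatrix} 0 & 0 \\ 0& I\end{smallmatrix}\right]$ to restore the full identity, and the problem collapses to inverting a shift of the maximally dissipative operator $\mathcal B$.

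To finish, I would invoke that $\varepsilon I-\mathcal B$ is boundedly invertible for $\varepsilon>0$. Dissipativity of $\mathcal B$ gives the lower bound $\|(\varepsilon I-\mathcal B)x\|\,\|x\|\ge\mathrm{Re}\langle(\varepsilon I-\mathcal B)x,x\rangle=\varepsilon\|x\|^2-\mathrm{Re}\langle\mathcal B x,x\rangle\ge\varepsilon\|x\|^2$, hence $\|(\varepsilon I-\mathcal B)x\|\ge\varepsilon\|x\|$, so $\varepsilon I-\mathcal B$ is injective with bounded inverse on its range; maximal dissipativity supplies surjectivity of $\varepsilon I-\mathcal B$, so the inverse is everywhere defined and bounded. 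Therefore $\varepsilon\mathcal E_I-{\mathcal A}$ is boundedly invertible with $\varepsilon\in{\mathbb C}^+$, and Corollary~\ref{C:8} yields regularity of $(\mathcal E,{\mathcal A}\mathcal Q)$.

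I do not expect a genuine obstacle here: the whole argument rests on spotting the cancellation that makes $s=\varepsilon$ the right shift, after which everything is standard. The only point requiring a line of care is the passage from maximal dissipativity of $\mathcal B$ to bounded invertibility of $\varepsilon I-\mathcal B$, which is exactly the Lumer--Phillips-type estimate recorded above and already used in the proof of Lemma~\ref{L2.1a}; I would simply reference that reasoning (or the appendix lemmas) rather than reprove it.
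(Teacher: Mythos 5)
Your proof is correct and follows essentially the same route as the paper: the same cancellation with $s=\varepsilon$ turning $\varepsilon\mathcal E_I-{\mathcal A}$ into $\varepsilon I-\bigl({\mathcal A}+\varepsilon\left[\begin{smallmatrix} 0 & 0 \\ 0 & I\end{smallmatrix}\right]\bigr)$, followed by bounded invertibility from maximal dissipativity and an appeal to Corollary~\ref{C:8}. The only difference is that you spell out the Lumer--Phillips-type estimate that the paper simply cites, which is fine.
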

\proof
  Since ${\mathcal A} + \varepsilon \begin{bmatrix} 0 & 0  \\ 0& I \end{bmatrix}$ is maximally dissipative, we know that ${\mathcal A} + \varepsilon \begin{bmatrix} 0 & 0  \\ 0& I \end{bmatrix} - \delta \begin{bmatrix} I & 0  \\ 0& I \end{bmatrix}$ is boundedly invertible for every $\delta >0$. Choosing $\delta = \varepsilon$, we see that this implies that $\varepsilon \mathcal E_I - {\mathcal A}$ is boundedly invertible. By Corollary \ref{C:8} it follows that this is equivalent to $(\mathcal E,{\mathcal A} \mathcal Q)$ being regular.
\hfill \eproof
\medskip

We end this section with a few observations and additional results.

In the finite-dimensional case it has been shown in \cite{MehMW21} that if $\mathcal Q$ is injective and the pair is singular then the three matrices
$\mathcal E,\mathcal J\mathcal Q, \mathcal R\mathcal Q$ have a common nullspace. { Here $\mathcal J=\frac{1}{2} (\mathcal A-\mathcal A^*)$ and $\mathcal R= -\frac{1}{2} (\mathcal A+\mathcal A^*)$.}
However, this is not true if $\mathcal Q$ is not injective.
\begin{example}\label{ex:counter}{\rm
Consider the matrices
\[
  \mathcal E= \begin{bmatrix} e_{11} & e_{12} \\ 0 & 0 \end{bmatrix}, \quad {\mathcal J}= \begin{bmatrix} 0 & -1 \\ 1 & 0 \end{bmatrix},\quad \mathcal Q= \begin{bmatrix} 0 & 0 \\ q_{21} & q_{22} \end{bmatrix}.
\]
Then
\[
  \mathcal J\mathcal Q = \begin{bmatrix} -q_{21} & -q_{22}\\  0 & 0  \end{bmatrix}, \mbox{ and } s\mathcal E - \mathcal J\mathcal Q = \begin{bmatrix} s e_{11}  + q_{21} & se_{12}   + q_{22} \\ 0 & 0 \end{bmatrix},
\]
and so the pair $(\mathcal E,\mathcal J\mathcal Q)$ is singular. Furthermore,
\[
 \mathcal E^* \mathcal Q = \begin{bmatrix} e_{11} & 0  \\ e_{12} & 0 \end{bmatrix}\begin{bmatrix} 0 & 0 \\ q_{21} & q_{22} \end{bmatrix} = \begin{bmatrix} 0 & 0  \\ 0 & 0 \end{bmatrix}
\]
is symmetric, and positive semidefinite.
However, $\mathcal E$ and $\mathcal J\mathcal Q$ do not have a common kernel.
}%end rm
\end{example}

Since our aim was to study regularity, i.e., boundedly invertibility of $s{\mathcal E} - {\mathcal A}{\mathcal Q}$, we had to check conditions for injectivity and surjectivity. However, separate conditions can also be obtained.
%
%\begin{theorem} \label{t:vcommon} Consider a pair of operators $(\mathcal E, (\mathcal J -\mathcal R)\mathcal Q)$. If $\mathcal Q$ and $\mathcal Q^*$ are injective, then every $s\in {\mathbb C}$ (or every positive $s$) is in the point spectrum of the pencil $ \lambda \mathcal E -(\mathcal J-\mathcal R) \mathcal Q$ if and only if $\ker \mathcal R\mathcal Q \cap \ker \mathcal E \cap \ker \mathcal J \mathcal Q \neq \{0\}$.
%\end{theorem}
%
%
%Using Lemma~\ref{L2.1a}, we  show next, that if Assumption~\ref{A1:HZ} holds, then regularity is independent of the particular representation of $\mathcal E$, $\mathcal Q$, and the complex number $s$.
%
\begin{lemma}
  \label{L:2.2-new} Consider an adHDAE of the form~\eqref{eq:adae} satisfying
  the  conditions i)-iii) in Assumption \ref{A1:HZ}, and let $\tilde{\mathcal E}$ and $\tilde{\mathcal Q}$ satisfy the same assumptions as $\mathcal E$ and $\mathcal Q$ in Assumption \ref{A1:HZ}, respectively. Furthermore, let $s,\tilde{s} \in {\mathbb C}^+$. Then the following assertions hold.
  \begin{enumerate}
  \item[a)]
    $(\tilde{s}\tilde{\mathcal E} - {\mathcal A}\tilde{\mathcal Q})$ is injective if and only if $(s \mathcal E - {\mathcal A} \mathcal Q)$ is. Furthermore, this holds if and only if ${\mathcal A}\left[\begin{smallmatrix}  0 \\ x_2 \end{smallmatrix} \right] =0$ implies $x_2=0$.
    \item[b)]
  Let ${\mathcal A}$ be maximally dissipative. The range of $\tilde{s}\tilde{\mathcal E} - {\mathcal A}\tilde{\mathcal Q}$ is dense if and only if the range of $s\mathcal E - {\mathcal A}\mathcal Q$ is dense.
    Furthermore, this holds if and only if ${\mathcal A}^*\left[\begin{smallmatrix}  0 \\ z_2 \end{smallmatrix} \right] =0$ implies $z_2=0$.
  \end{enumerate}
\end{lemma}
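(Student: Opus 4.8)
The plan is to reduce both assertions to the kernel computations that already appear in Cases~(a) and~(b) of the proof of Theorem~\ref{t_commonQI}, and to observe that the resulting characterizations involve only ${\mathcal A}$ (respectively ${\mathcal A}^*$) applied to vectors of the form $\left[\begin{smallmatrix} 0 \\ \cdot\end{smallmatrix}\right]$, and hence do not depend on the particular $s,\tilde s \in {\mathbb C}^+$ or on the admissible operators ${\mathcal E},\tilde{\mathcal E},{\mathcal Q},\tilde{\mathcal Q}$. Once the injectivity (resp.\ density of range) of each operator is shown to be equivalent to one and the same $s$- and operator-independent condition, the claimed equivalences between $(\tilde s\tilde{\mathcal E}-{\mathcal A}\tilde{\mathcal Q})$ and $(s{\mathcal E}-{\mathcal A}{\mathcal Q})$ follow immediately.

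For part~a), I would start from a kernel element $x=\left[\begin{smallmatrix} x_1 \\ x_2\end{smallmatrix}\right]$ with $(s{\mathcal E}-{\mathcal A}{\mathcal Q})x=0$ and pair it with ${\mathcal Q}x$, exactly as in Case~(a) of Theorem~\ref{t_commonQI}. Using that ${\mathcal E}{\mathcal Q}^{-1}\ge 0$ (Lemma~\ref{L:2.1}), that ${\mathcal A}$ is dissipative, and that $\mathrm{Re}(s)>0$, taking real parts forces $\langle {\mathcal E}{\mathcal Q}^{-1}{\mathcal Q}x,{\mathcal Q}x\rangle=0$, hence $x\in\ker({\mathcal E})$ and therefore $x_1=0$ by the (coercivity, thus) invertibility of ${\mathcal E}_1$. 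Substituting $x_1=0$ back into the kernel equation annihilates the ${\mathcal E}$-term and leaves ${\mathcal A}\left[\begin{smallmatrix} 0 \\ {\mathcal Q}_2 x_2\end{smallmatrix}\right]=0$. Since ${\mathcal Q}_2$ is boundedly invertible, injectivity of $(s{\mathcal E}-{\mathcal A}{\mathcal Q})$ is equivalent to the implication ``${\mathcal A}\left[\begin{smallmatrix} 0 \\ x_2\end{smallmatrix}\right]=0 \Rightarrow x_2=0$''; the converse direction is immediate, since any nonzero $w_2$ with ${\mathcal A}\left[\begin{smallmatrix} 0 \\ w_2\end{smallmatrix}\right]=0$ yields the nonzero kernel element $\left[\begin{smallmatrix} 0 \\ {\mathcal Q}_2^{-1}w_2\end{smallmatrix}\right]$. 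As this condition is the same for every admissible $s,{\mathcal E},{\mathcal Q}$, part~a) follows.

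For part~b), I would use that the range of $s{\mathcal E}-{\mathcal A}{\mathcal Q}$ is dense if and only if its adjoint is injective. Because ${\mathcal E}$ and ${\mathcal Q}$ are bounded and ${\mathcal Q}$ is boundedly invertible, the adjoint is $\overline{s}{\mathcal E}^*-{\mathcal Q}^*{\mathcal A}^*$ with domain $D({\mathcal A}^*)$, as already recorded in Case~(b) of Theorem~\ref{t_commonQI}; here maximal dissipativity of ${\mathcal A}$ is precisely what guarantees that ${\mathcal A}^*$ is dissipative. Writing a kernel element of the adjoint as $z={\mathcal Q}w$ and pairing $(\overline{s}{\mathcal E}^*-{\mathcal Q}^*{\mathcal A}^*)z=0$ with $w$, I would mirror part~a): now ${\mathcal E}^*{\mathcal Q}\ge 0$ is the relevant non-negative operator and $\mathrm{Re}\langle {\mathcal A}^* z,z\rangle\le 0$ supplies the dissipativity. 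Taking real parts forces ${\mathcal E}^*{\mathcal Q}w=0$, whence $w_1=0$ and $z=\left[\begin{smallmatrix} 0 \\ {\mathcal Q}_2 w_2\end{smallmatrix}\right]$; feeding this back gives ${\mathcal E}^*z=0$ and then ${\mathcal A}^* z=0$. Thus density of range is equivalent to ``${\mathcal A}^*\left[\begin{smallmatrix} 0 \\ z_2\end{smallmatrix}\right]=0 \Rightarrow z_2=0$'', and for the converse a nonzero such $z_2$ produces $z=\left[\begin{smallmatrix} 0 \\ z_2\end{smallmatrix}\right]$ orthogonal to the entire range (the ${\mathcal E}$-pairing vanishes by the block structure, the ${\mathcal A}{\mathcal Q}$-pairing vanishes because ${\mathcal A}^* z=0$). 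Again the condition is independent of $s,{\mathcal E},{\mathcal Q}$.

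I expect the main obstacle to be the adjoint bookkeeping in part~b): justifying that $(s{\mathcal E}-{\mathcal A}{\mathcal Q})^*=\overline{s}{\mathcal E}^*-{\mathcal Q}^*{\mathcal A}^*$ with domain exactly $D({\mathcal A}^*)$ relies on the boundedness and bounded invertibility of ${\mathcal Q}$ together with the density of $D({\mathcal A})$, and the dissipativity of ${\mathcal A}^*$ has to be imported from the maximal dissipativity hypothesis (which is why this hypothesis appears only in part~b)). The energy-type estimates in both parts are routine once the correct non-negative operator—${\mathcal E}{\mathcal Q}^{-1}$ in a), ${\mathcal E}^*{\mathcal Q}$ in b)—has been identified, so the real care lies in the adjoint identification and in confirming that the two final conditions genuinely do not see $s$, ${\mathcal E}$, or ${\mathcal Q}$.
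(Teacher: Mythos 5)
Your proposal is correct and is essentially the paper's proof: the paper disposes of this lemma in one line by pointing to Cases (a) and (b) of the proof of Theorem~\ref{t_commonQI}, and your argument is exactly the faithful expansion of that reference, including the two key observations the paper relies on — that the kernel computations collapse to the $s$-, $\mathcal E$-, $\mathcal Q$-independent conditions ${\mathcal A}\left[\begin{smallmatrix}0\\ x_2\end{smallmatrix}\right]=0\Rightarrow x_2=0$ and ${\mathcal A}^*\left[\begin{smallmatrix}0\\ z_2\end{smallmatrix}\right]=0\Rightarrow z_2=0$, and that maximal dissipativity is needed in part b) so that ${\mathcal A}^*$ is a well-defined dissipative operator and density of range can be tested via injectivity of the adjoint $\overline{s}{\mathcal E}^*-{\mathcal Q}^*{\mathcal A}^*$.
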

\proof
The proofs are similar to the corresponding parts of the proof of Theorem \ref{t_commonQI}.
\hfill \eproof

\subsection{Special block operators}\label{sec:stokes}
As a prototypical example of adHDAEs, in this section we study special block operators pairs, as they arise e.g. in Stokes and Oseen equations that have been formulated in \cite{EmmM13}, or \cite{ReiS23}, as abstract DAE. Similar abstract block DAE operators arise also in the study of the Euler equations in gas transport \cite{EggK18,EggKLMM18}.
%and in  \cite{ReiS23} as  port-Hamiltonian systems.
%We follow the setting of \cite{EmmM13} to analyze the %properties of the associated  pair of operators.

In the following ${\mathcal L}({\mathbb W},{\mathbb Y})$ denotes the space of  bounded, linear operators between Hilbert spaces ${\mathbb W}$ and ${\mathbb Y}$. Furthermore, ${\mathcal L}({\mathbb W}) = {\mathcal L}({\mathbb W},{\mathbb W})$.

Let ${\mathbb V}$ be a real Hilbert space such that ${\mathbb V} \subsethook {\mathbb X}_1 = { \mathbb X}_1^* \subsethook { \mathbb V}^*$, i.e., they form a  \emph{Gelfand triple}, see e.g.,~\cite{Wlo87}. Let $ \mathcal A_0 \in {\mathcal L}({\mathbb V},{ \mathbb V}^*)$, $\mathcal B_0 \in {\mathcal L}({\mathbb U},{\mathbb V}^*)$, where ${\mathbb U}$ is a second (real) Hilbert space. So  $\mathcal  B_0^* \in {\mathcal L}({\mathbb V},{\mathbb U})$, where we have identified ${\mathbb U}^*$ with ${\mathbb U}$. Finally, with these operators and $\mathcal D_0 \in {\mathcal L}({\mathbb U})$,
we define the block operator
 \begin{equation}
  \label{eq:50}
    {\mathcal A} = \begin{bmatrix}\mathcal  A_0 & \mathcal  B_0 \\-\mathcal  B_0^* &- \mathcal D_0 \end{bmatrix}
 \end{equation}
 with domain
 \begin{equation}
 \label{eq:50a}
    D({\mathcal A}) = \{ \left[\begin{smallmatrix} v \\ u \end{smallmatrix}\right] \in {\mathbb V} \oplus {\mathbb U} \mid  \mathcal  A_0v +  \mathcal B_0u \in {\mathbb X}_1\}.
 \end{equation}

For this operator $\mathcal A$, we study the pair $(\mathcal E_I,\mathcal A)$ with $\mathcal E_I$ as in \eqref{eq:5} and begin our analysis with two simple lemmas.
\begin{lemma}
\label{L:12}
 Consider the operator ${\mathcal A}$ as in (\ref{eq:50}) with its domain as in (\ref{eq:50a}). Assume that $-\mathcal  D_0$ and $\mathcal  A_0$ are dissipative,
 i.e.,

\begin{equation}
\label{eq:50b}
     \langle \mathcal  A_0 v, v \rangle_{{\mathbb V}^*,{\mathbb V}} \leq 0 \mbox{ for all } v \in {\mathbb V}, \ {\langle \mathcal  -D_0 u, u \rangle_{{\mathbb U}^*,{\mathbb U}} \leq 0 \mbox{ for all } u \in {\mathbb U},}
\end{equation}
then ${\mathcal A}$ is dissipative on ${\mathbb X}_1 \oplus {\mathbb U}$.
 \end{lemma}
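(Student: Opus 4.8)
The plan is to verify the dissipativity inequality $\mathrm{Re}\,\langle \mathcal{A}\left[\begin{smallmatrix} v \\ u\end{smallmatrix}\right], \left[\begin{smallmatrix} v \\ u\end{smallmatrix}\right]\rangle \le 0$ directly for every $\left[\begin{smallmatrix} v \\ u\end{smallmatrix}\right] \in D(\mathcal{A})$, by expanding the inner product on $\mathcal{X}_1 \oplus \mathcal{U}$ into its diagonal and off-diagonal contributions. Writing $\mathcal{A}\left[\begin{smallmatrix} v \\ u\end{smallmatrix}\right] = \left[\begin{smallmatrix} \mathcal{A}_0 v + \mathcal{B}_0 u \\ -\mathcal{B}_0^* v - \mathcal{D}_0 u\end{smallmatrix}\right]$, the pairing splits as $\langle \mathcal{A}_0 v + \mathcal{B}_0 u,\, v\rangle_{\mathcal{X}_1} + \langle -\mathcal{B}_0^* v - \mathcal{D}_0 u,\, u\rangle_{\mathcal{U}}$, and the whole argument reduces to handling the two cross terms and the two diagonal terms.

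The first summand is the delicate one. A priori $\mathcal{A}_0 v + \mathcal{B}_0 u$ only lies in $\mathcal{V}^*$, so the $\mathcal{X}_1$-inner product is not immediately defined; this is exactly where the domain condition \eqref{eq:50a} enters, as it forces $\mathcal{A}_0 v + \mathcal{B}_0 u \in \mathcal{X}_1$. Using the Gelfand triple $\mathcal V \subsethook \mathcal X_1 = \mathcal X_1^* \subsethook \mathcal V^*$, an element of the pivot space $\mathcal{X}_1$ paired against $v \in \mathcal{V}$ has $\mathcal{X}_1$-inner product equal to the duality pairing $\langle \cdot,\cdot\rangle_{\mathcal{V}^*,\mathcal{V}}$. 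Hence I may rewrite the first summand as $\langle \mathcal{A}_0 v, v\rangle_{\mathcal{V}^*,\mathcal{V}} + \langle \mathcal{B}_0 u, v\rangle_{\mathcal{V}^*,\mathcal{V}}$, and by the definition of the adjoint $\mathcal{B}_0^* \in \mathcal{L}(\mathcal{V},\mathcal{U})$ (with $\mathcal{U}^*$ identified with $\mathcal{U}$) the coupling term becomes $\langle \mathcal{B}_0^* v, u\rangle_{\mathcal{U}}$.

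Collecting everything, the full inner product equals $\langle \mathcal{A}_0 v, v\rangle_{\mathcal{V}^*,\mathcal{V}} + \langle \mathcal{B}_0^* v, u\rangle_{\mathcal{U}} - \langle \mathcal{B}_0^* v, u\rangle_{\mathcal{U}} - \langle \mathcal{D}_0 u, u\rangle_{\mathcal{U}}$. In the real setting the two coupling terms cancel outright (in a complex formulation they are conjugate and cancel upon taking the real part), so what remains is $\langle \mathcal{A}_0 v, v\rangle_{\mathcal{V}^*,\mathcal{V}} - \langle \mathcal{D}_0 u, u\rangle_{\mathcal{U}}$. The first summand is nonpositive by hypothesis \eqref{eq:50b}, and the second is nonpositive precisely because $-\mathcal{D}_0$ is assumed dissipative, i.e.\ $\langle -\mathcal{D}_0 u, u\rangle_{\mathcal{U}} \le 0$. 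This yields $\mathrm{Re}\,\langle \mathcal{A}\left[\begin{smallmatrix} v \\ u\end{smallmatrix}\right], \left[\begin{smallmatrix} v \\ u\end{smallmatrix}\right]\rangle \le 0$ and proves the claim.

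The only genuine subtlety — and the step I would take most care over — is the Gelfand-triple bookkeeping: one must check that the domain restriction \eqref{eq:50a} is exactly what makes the $\mathcal{X}_1$-pairing of $\mathcal{A}_0 v + \mathcal{B}_0 u$ with $v$ legitimate and equal to the $\mathcal{V}^*$--$\mathcal{V}$ duality pairing, so that the skew placement of $\mathcal{B}_0$ and $-\mathcal{B}_0^*$ in the off-diagonal blocks produces the clean cancellation. Everything else is a routine expansion and the direct invocation of the two dissipativity hypotheses.
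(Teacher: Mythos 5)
Your proof is correct and follows essentially the same route as the paper's: expand the block inner product on $\mathcal X_1 \oplus \mathcal U$, use the domain condition \eqref{eq:50a} and the Gelfand triple to identify the $\mathcal X_1$-pairing with the $\mathcal V^*$--$\mathcal V$ duality, cancel the $\mathcal B_0$/$-\mathcal B_0^*$ cross terms via the adjoint relation, and invoke the dissipativity of $\mathcal A_0$ and $-\mathcal D_0$. The only cosmetic difference is that the paper works with the symmetrized expression $\langle \mathcal A x, x\rangle + \langle x, \mathcal A x\rangle$ rather than $\mathrm{Re}\,\langle \mathcal A x, x\rangle$, which is an equivalent formulation.
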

\proof
To show that ${\mathcal A}$ is dissipative on ${\mathbb X}_1 \oplus {\mathbb U}$, we choose $\left[\begin{smallmatrix} v \\ u \end{smallmatrix}\right] \in D({\mathcal A})$.
Then we have
 \begin{align*}
   \langle {\mathcal A}\begin{bmatrix} v \\ u \end{bmatrix} , \begin{bmatrix} v \\ u \end{bmatrix} \rangle_{{\mathbb X}_1 \oplus {\mathbb U}} &+ \langle \begin{bmatrix} v \\ u \end{bmatrix} , {\mathcal A} \begin{bmatrix} v \\ u \end{bmatrix} \rangle_{{\mathbb X}_1 \oplus {\mathbb U}} \\
   =&\ \langle  \mathcal A_0v + \mathcal B_0u, v \rangle_{{\mathbb X}_1} + \langle   v, \mathcal A_0v + \mathcal B_0u \rangle_{{\mathbb X}_1} +\\
   &\  \langle -\mathcal B_0^* v-\mathcal D_0 u, u\rangle_{{\mathbb U}} + \langle u, -\mathcal B_0^* v-\mathcal D_0 u\rangle_{{\mathbb U}} \\
   =&\ \langle  \mathcal A_0v + \mathcal B_0u, v \rangle_{{\mathbb V}^*, {\mathbb V}} + \langle   v, \mathcal A_0v + \mathcal B_0u \rangle_{{\mathbb V},{\mathbb V}^*} - \\
   &\ \langle \mathcal B_0^* v, u\rangle_{{\mathbb U}} - \langle u, \mathcal B_0^* v\rangle_{{\mathbb U}}- \langle \mathcal D_0 u,u\rangle_{{\mathbb U}} - \langle u, \mathcal D_0 u\rangle_{{\mathbb U}}\\
   =&\ \langle  \mathcal A_0v, v \rangle_{{\mathbb V}^*, {\mathbb V}} +  \langle \mathcal B_0u, v \rangle_{{\mathbb V}^*, {\mathbb V}} + \langle   v, \mathcal A_0v \rangle_{{\mathbb V},{\mathbb V}^*}  +\langle   v, \mathcal B_0u \rangle_{{\mathbb V},{\mathbb V}^*} \\
   & - \langle v, \mathcal B_0 u\rangle_{{\mathbb V},{\mathbb V}^*} - \langle \mathcal B_0u, v\rangle_{{\mathbb V}^*,{\mathbb V}} - \langle \mathcal D_0 u, u\rangle_{{\mathbb U}} - \langle u, \mathcal D_0 u\rangle_{{\mathbb U}}\\
   =& \ \langle  \mathcal A_0v, v \rangle_{{\mathbb V}^*, {\mathbb V}} + \langle   v, \mathcal A_0v \rangle_{{\mathbb V},{\mathbb V}^*} - \langle \mathcal D_0 u, u\rangle_{{\mathbb U}} - \langle u, \mathcal D_0 u\rangle_{{\mathbb U}} \leq 0,
 \end{align*}
 where we have used (\ref{eq:50b}). Thus we have proved the assertion.
\hfill \eproof

{ Therefore}, if we choose $\mathcal E=\mathcal E_I$, $\mathcal Q=I$, and ${\mathcal A}$ as in (\ref{eq:50})--(\ref{eq:50a}) to be dissipative, then the conditions i)--iii) of Assumption~\ref{A1:HZ} are  satisfied. In this setting, we study the injectivity of $(\mathcal E_I,{\mathcal A})$.
 \begin{lemma}
 \label{L:13}
 Consider the operator ${\mathcal A}$ with its domain as in (\ref{eq:50}) and (\ref{eq:50a}). Suppose that ${\mathcal A}$ is dissipative and one of the following two conditions holds:
\begin{itemize}
 \item [a)]
   $\mathcal B_0$ is injective, or
 \item [b)]
   the range of $\mathcal B_0$ intersected with ${\mathbb X}_1$ contains only the zero element,
 \end{itemize}
then $\mathcal E_I-{\mathcal A}$ is injective.
 \end{lemma}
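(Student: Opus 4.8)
The plan is to describe $\ker(\mathcal E_I - {\mathcal A})$ explicitly and then show it is trivial. Let $x=\left[\begin{smallmatrix} v \\ u\end{smallmatrix}\right]\in D({\mathcal A})$ satisfy $(\mathcal E_I - {\mathcal A})x=0$. Using the block forms \eqref{eq:50} and \eqref{eq:5} this splits into the two equations
\[
 v = \mathcal A_0 v + \mathcal B_0 u, \qquad \mathcal B_0^* v + \mathcal D_0 u = 0,
\]
where the first is an identity in $\mathcal X_1$ (indeed, $x\in D({\mathcal A})$ is exactly the condition \eqref{eq:50a} guaranteeing $\mathcal A_0 v + \mathcal B_0 u \in \mathcal X_1$) and the second holds in $\mathcal U$.

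First I would kill the $v$-component by testing against the state itself. Pairing $(\mathcal E_I - {\mathcal A})x = 0$ with $x$ in $\mathcal X_1\oplus\mathcal U$ gives $\langle \mathcal E_I x, x\rangle = \langle {\mathcal A} x, x\rangle$. Since $\mathcal E_I$ acts as the identity on the first block and annihilates the second, the left-hand side equals $\|v\|_{\mathcal X_1}^2\ge 0$, while dissipativity of ${\mathcal A}$ forces $\mathrm{Re}\langle {\mathcal A} x, x\rangle\le 0$. Taking real parts yields $\|v\|_{\mathcal X_1}^2\le 0$, hence $v=0$. This is precisely the testing trick of case (a) in the proof of Theorem~\ref{t_commonQI}, specialized to $\mathcal Q = I$ and $s=1$; note that because we only use dissipativity of ${\mathcal A}$ as a black box, we never have to split $\langle {\mathcal A} x, x\rangle$ into its $\mathcal A_0$- and $\mathcal D_0$-parts, so the Gelfand-triple duality bookkeeping of Lemma~\ref{L:12} can be sidestepped here.

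With $v=0$ the remaining equations collapse to $\mathcal B_0 u = 0$ and $\mathcal D_0 u = 0$, so the entire question reduces to whether a nonzero $u\in\ker\mathcal B_0$ can occur. Under hypothesis (a) injectivity of $\mathcal B_0$ settles this immediately, giving $u=0$ and hence the injectivity of $\mathcal E_I-{\mathcal A}$. Under hypothesis (b) the difficulty is that the assumption is phrased through $\mathrm{range}(\mathcal B_0)\cap\mathcal X_1$ rather than through $\ker\mathcal B_0$, so the main obstacle is bridging the two: I would argue that the offending null vector $\left[\begin{smallmatrix} 0 \\ u\end{smallmatrix}\right]$ produces, via the first equation read in $\mathcal V^*$, an element $\mathcal B_0 u$ that is forced into $\mathcal X_1$, whence the trivial-intersection condition gives $\mathcal B_0 u = 0$. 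Verifying that this genuinely compels $u=0$ — as opposed to merely recovering $\mathcal B_0 u = 0$ — is the step I expect to require the most care, and it is the point I would nail down using the concrete structure of $\mathcal B_0$ in the Stokes/Oseen model (where $\mathcal B_0$ is the gradient and $\ker\mathcal B_0$ is trivial modulo constants), together with the embeddings $\mathcal V\hookrightarrow\mathcal X_1\hookrightarrow\mathcal V^*$.
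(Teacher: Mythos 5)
Your reduction to null vectors of the form $\left[\begin{smallmatrix}0\\ u\end{smallmatrix}\right]$ and your case a) are correct and follow the paper's route: the paper cites Lemma~\ref{L:2.2-new}~a) for exactly this reduction, while you inline its proof via the pairing/dissipativity trick; either way one ends up with $\mathcal B_0u=0$, $\mathcal D_0u=0$, and the domain condition $\mathcal B_0u\in\mathcal X_1$, after which hypothesis a) finishes. The genuine gap is case b), which you leave open and propose to close ``using the concrete structure of $\mathcal B_0$ in the Stokes/Oseen model''; that cannot work, because the lemma is a general statement about all triples $(\mathcal A_0,\mathcal B_0,\mathcal D_0)$, not about the gradient. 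What the paper actually does is apply b) not to the equation $\mathcal B_0u=0$ but to the domain condition itself: $\left[\begin{smallmatrix}0\\ u\end{smallmatrix}\right]\in D({\mathcal A})$ forces $\mathcal B_0u\in\mathcal X_1$, and b) is invoked in the form ``$\mathcal B_0u\in\mathcal X_1$ can only happen when $u=0$'' --- the ``completely unbounded'' reading mentioned in the remark immediately after the lemma. Under that reading your argument closes in one line, with no detour through $\mathcal B_0u=0$ at all.

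That said, the hesitation you flagged points at a real imprecision in the wording, not at a missing idea of your own. Under the literal set-theoretic reading of b), namely only $\mathrm{range}(\mathcal B_0)\cap\mathcal X_1=\{0\}$, the conclusion is in fact false: if $0\neq u\in\ker\mathcal B_0\cap\ker\mathcal D_0$ (take e.g.\ $\mathcal D_0=0$ and a non-injective $\mathcal B_0$ whose range meets $\mathcal X_1$ only in $0$), then $\left[\begin{smallmatrix}0\\ u\end{smallmatrix}\right]\in D({\mathcal A})$ and $(\mathcal E_I-{\mathcal A})\left[\begin{smallmatrix}0\\ u\end{smallmatrix}\right]=0$, so $\mathcal E_I-{\mathcal A}$ is not injective even though b) holds. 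Hence no amount of care can derive $u=0$ from $\mathcal B_0u=0$ under your reading; condition b) must be interpreted, as the paper's own proof silently does, as ``no nonzero $u$ is mapped by $\mathcal B_0$ into $\mathcal X_1$'', which subsumes the injectivity-on-preimages that you were missing. So your proposal is incomplete as written, but the repair is a reinterpretation of hypothesis b), not additional analysis, and certainly not an appeal to a specific PDE example.
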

\proof
We use Lemma \ref{L:2.2-new} a) to prove the assertion and study the equation ${\mathcal A} \left[\begin{smallmatrix} 0 \\ u \end{smallmatrix}\right]=0$. Note that this implies in particular that $ \left[\begin{smallmatrix} 0 \\ u \end{smallmatrix}\right] \in D({\mathcal A})$. By (\ref{eq:50a}) this gives the condition that $\mathcal A_0 0+ \mathcal B_0u =\mathcal B_0u \in {\mathbb X}_1$. So if b)  holds, this can only happen when $u=0$. If $\mathcal B_0$ can map into ${ \mathbb X}_1$, then the equation ${\mathcal A} \left[\begin{smallmatrix} 0 \\ u \end{smallmatrix}\right]=0$ implies $\mathcal B_0u=0$. Then a) gives $u=0$, and the proof is complete.
\hfill\eproof
\medskip

Note that condition b) in Lemma~\ref{L:13} is sometimes rephrased as \emph{$\mathcal B_0$ is completely unbounded}.

To show that $\mathcal E_I-{\mathcal A}$ is boundedly invertible, we need stronger conditions on $\mathcal B_0$ and $\mathcal A_0$.
We say that $\mathcal A_0$ \emph{satisfies a G\aa rding inequality} with respect to ${\mathbb X}_1$ and ${\mathbb V}$, if
there exists an $\alpha_1>0$ such for all $v\in {\mathbb V}$ the inequality
\begin{equation}
\label{eq:22}
 \|v\|^2_{{\mathbb X}_1} + |\langle \mathcal A_0 v, v \rangle_{{\mathbb V}^*,{\mathbb V}}| \geq \alpha_1 \|v\|^2_{{\mathbb V}}
\end{equation}
holds. Note that since $\mathcal A_0 \in {\mathcal L}({\mathbb V},{\mathbb V}^*)$ and ${\mathbb V} \subsethook {\mathbb X}_1$, we always have that
\[
 \|v\|^2_{{\mathbb X}_1} + |\langle \mathcal A_0 v, v \rangle_{{\mathbb V}^*,{\mathbb V}}| \leq \alpha_2  \|v\|^2_{{\mathbb V}}
\]
for some $\alpha_2>0$.
\begin{lemma}
\label{L:14}
Let $\mathcal A_0 \in {\mathcal L}({\mathbb V},{\mathbb V}^*)$ be dissipative and satisfy the G\aa rding inequality~\eqref{eq:22}. Then
$i_{{\mathbb V}} - \mathcal A_0$ is a boundedly invertible operator from ${\mathbb V}$ to ${\mathbb V}^*$. Here $i_{{\mathbb V}}$ is the inclusion map from  ${\mathbb V}$ into ${\mathbb V}^*$, i.e., $i_{{\mathbb V}}(v)=v$, for $v \in {\mathbb V}$.
\end{lemma}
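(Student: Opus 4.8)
The plan is to realize $i_{\mathcal V} - \mathcal A_0$ as the operator induced by a bounded, coercive bilinear form on $\mathcal V$ and then to invoke the Lax--Milgram theorem. Concretely, I would introduce
\[
  a(v,w) := \langle (i_{\mathcal V} - \mathcal A_0)v, w \rangle_{\mathcal V^*,\mathcal V} = \langle v, w\rangle_{\mathcal X_1} - \langle \mathcal A_0 v, w\rangle_{\mathcal V^*,\mathcal V}, \qquad v,w \in \mathcal V,
\]
where the second equality uses the defining property of the Gelfand triple $\mathcal V \subsethook \mathcal X_1 = \mathcal X_1^* \subsethook \mathcal V^*$, namely that $\langle i_{\mathcal V} v, w\rangle_{\mathcal V^*,\mathcal V} = \langle v, w\rangle_{\mathcal X_1}$ for $v,w \in \mathcal V$.

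First I would check boundedness of $a$. Since $\mathcal A_0 \in {\mathcal L}(\mathcal V, \mathcal V^*)$ and the embedding $\mathcal V \subsethook \mathcal X_1$ is continuous, there is a constant $c>0$ with $\|v\|_{\mathcal X_1} \leq c\|v\|_{\mathcal V}$, so that
\[
  |a(v,w)| \leq \|v\|_{\mathcal X_1}\|w\|_{\mathcal X_1} + \|\mathcal A_0\|\,\|v\|_{\mathcal V}\|w\|_{\mathcal V} \leq (c^2 + \|\mathcal A_0\|)\,\|v\|_{\mathcal V}\|w\|_{\mathcal V},
\]
which establishes continuity of $a$ on $\mathcal V \times \mathcal V$.

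The decisive step is coercivity, and here the two hypotheses combine exactly. Because $\mathcal A_0$ is dissipative, $\langle \mathcal A_0 v, v\rangle_{\mathcal V^*,\mathcal V} \leq 0$, hence $-\langle \mathcal A_0 v, v\rangle_{\mathcal V^*,\mathcal V} = |\langle \mathcal A_0 v, v\rangle_{\mathcal V^*,\mathcal V}|$. Therefore
\[
  a(v,v) = \|v\|^2_{\mathcal X_1} + |\langle \mathcal A_0 v, v\rangle_{\mathcal V^*,\mathcal V}| \geq \alpha_1 \|v\|^2_{\mathcal V}
\]
by the G\aa rding inequality~\eqref{eq:22}. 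Note that $a$ need not be symmetric, but this is no difficulty since the Lax--Milgram theorem requires only boundedness and coercivity, not symmetry.

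With $a$ bounded and coercive, the Lax--Milgram theorem yields, for every $f \in \mathcal V^*$, a unique $v \in \mathcal V$ with $a(v,w) = \langle f, w\rangle_{\mathcal V^*,\mathcal V}$ for all $w \in \mathcal V$, i.e.\ $(i_{\mathcal V} - \mathcal A_0)v = f$, together with the a~priori bound $\|v\|_{\mathcal V} \leq \alpha_1^{-1}\|f\|_{\mathcal V^*}$. This shows that $i_{\mathcal V} - \mathcal A_0$ is a bijection from $\mathcal V$ onto $\mathcal V^*$ with bounded inverse, as claimed. I expect no genuine obstacle here; the only point requiring care is the bookkeeping of the Gelfand-triple identifications, in particular the identity $\langle i_{\mathcal V}v, v\rangle_{\mathcal V^*,\mathcal V} = \|v\|^2_{\mathcal X_1}$, since it is precisely this identity that lets the dissipativity and the G\aa rding inequality add up to coercivity.
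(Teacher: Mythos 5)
Your proof is correct: in the paper's setting $\mathcal V$ is a \emph{real} Hilbert space, so dissipativity gives $-\langle \mathcal A_0 v,v\rangle_{\mathcal V^*,\mathcal V}=|\langle \mathcal A_0 v,v\rangle_{\mathcal V^*,\mathcal V}|$, the G\aa rding inequality~\eqref{eq:22} is exactly coercivity of your (possibly nonsymmetric) form $a$, and Lax--Milgram then yields bounded invertibility of $i_{\mathcal V}-\mathcal A_0$. The paper offers no argument of its own here, only the citation to Section 6.5 of \cite{Hac17}, which is precisely this Lax--Milgram machinery for bounded coercive forms, so your proposal is the intended proof written out in full.
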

\proof
See e.g.\ [Section 6.5] in \cite{Hac17}.
\hfill \eproof
\medskip

We will now present two theorems which give sufficient conditions for $(\mathcal E_I,{\mathcal A})$ to be regular. %
%We begin with the one which is closely %related to Theorem \ref{T:11}.
\begin{theorem}
\label{T:15}
Consider the operator ${\mathcal A}$ given by (\ref{eq:50}) and (\ref{eq:50a}). Let $\mathcal A_0$ and $-\mathcal D_0$  be dissipative, and  assume further that $\mathcal A_0$  satisfies the G\aa rding inequality (\ref{eq:22}). Finally, let $\left[ \begin{smallmatrix} \mathcal B_0\\ {\mathcal D}_0 \end{smallmatrix} \right]$be injective and have closed range, i.e., there exists $\beta>0$ such that for all $u \in {\mathbb U}$
\begin{equation}
\label{eq:23}
  \left\|\begin{bmatrix} \mathcal B_0\\ {\mathcal D}_0 \end{bmatrix} u\right\|_{ {\mathbb V}^* \oplus {\mathbb U}} \geq \beta \|u\|_{\mathbb U}.
\end{equation}

Under these conditions,  $\mathcal E_I - {\mathcal A}$ is boundedly invertible.

Moreover, $\mathcal E_I - {\mathcal A}$ is boundedly invertible if and only if the \emph{Schur complement} $\mathcal B_0^*(i_{{\mathbb V}} - \mathcal A_0)^{-1}\mathcal B_0+\mathcal D_0$ is boundedly invertible.
\end{theorem}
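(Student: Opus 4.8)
The plan is to reduce the block operator $\mathcal E_I - \mathcal A$ to a Schur complement in the $\mathcal U$-variable and then to establish coercivity of that complement. Writing out the equation $(\mathcal E_I - \mathcal A)\left[\begin{smallmatrix} v \\ u\end{smallmatrix}\right] = \left[\begin{smallmatrix} f\\ g\end{smallmatrix}\right]$ with $f\in\mathcal X_1$, $g\in\mathcal U$, and using the Gelfand-triple inclusions $\mathcal V \subsethook \mathcal X_1 \subsethook \mathcal V^*$, it becomes the pair
\[
(i_{\mathcal V} - \mathcal A_0)v - \mathcal B_0 u = f, \qquad \mathcal B_0^* v + \mathcal D_0 u = g,
\]
where the first equation is read in $\mathcal V^*$. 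Since $\mathcal A_0$ is dissipative and satisfies the G\aa rding inequality \eqref{eq:22}, Lemma~\ref{L:14} makes $i_{\mathcal V} - \mathcal A_0 \colon \mathcal V \to \mathcal V^*$ boundedly invertible, so I can solve the first equation as $v = (i_{\mathcal V} - \mathcal A_0)^{-1}(f + \mathcal B_0 u)$.

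First I would establish the ``moreover'' equivalence by block Gaussian elimination. Substituting this $v$ into the second equation reduces the system to $S u = g - \mathcal B_0^*(i_{\mathcal V} - \mathcal A_0)^{-1} f$, with $S := \mathcal B_0^*(i_{\mathcal V} - \mathcal A_0)^{-1}\mathcal B_0 + \mathcal D_0$. All maps appearing here are bounded across the Gelfand triple, so the passage between the full system and the reduced equation is effected by bounded and boundedly invertible operations; hence $\mathcal E_I - \mathcal A$ is boundedly invertible precisely when $S$ is. The only point to check is the domain constraint \eqref{eq:50a}: for the candidate solution one has $\mathcal A_0 v + \mathcal B_0 u = i_{\mathcal V}v - f = v - f \in \mathcal X_1$, so $\left[\begin{smallmatrix} v\\u\end{smallmatrix}\right] \in D(\mathcal A)$, which is exactly what makes the elimination reversible in both directions.

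Then I would prove that the stated hypotheses force $S$ to be boundedly invertible, which together with the equivalence yields the first assertion. Setting $w := (i_{\mathcal V} - \mathcal A_0)^{-1}\mathcal B_0 u$, dissipativity of $\mathcal A_0$ together with \eqref{eq:22} gives
\[
\langle \mathcal B_0 u, w\rangle_{\mathcal V^*,\mathcal V} = \langle(i_{\mathcal V} - \mathcal A_0)w, w\rangle_{\mathcal V^*,\mathcal V} = \|w\|_{\mathcal X_1}^2 - \langle \mathcal A_0 w, w\rangle_{\mathcal V^*,\mathcal V} \geq \alpha_1\|w\|_{\mathcal V}^2 .
\]
Boundedness of $i_{\mathcal V} - \mathcal A_0$ gives $\|w\|_{\mathcal V} \geq \|\mathcal B_0 u\|_{\mathcal V^*}/\|i_{\mathcal V} - \mathcal A_0\|$, and the closed-range bound \eqref{eq:23} gives $\|\mathcal B_0 u\|_{\mathcal V^*} \geq \beta\|u\|$. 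Combining these with $\mathcal D_0 \geq 0$ and the adjoint identity $\langle \mathcal B_0^*(i_{\mathcal V} - \mathcal A_0)^{-1}\mathcal B_0 u, u\rangle_{\mathcal U} = \langle \mathcal B_0 u, w\rangle_{\mathcal V^*,\mathcal V}$ produces the coercivity estimate
\[
\mathrm{Re}\,\langle S u, u\rangle_{\mathcal U} \geq \frac{\alpha_1 \beta^2}{\|i_{\mathcal V} - \mathcal A_0\|^2}\,\|u\|^2 \quad\text{for all } u \in \mathcal U .
\]
As $S$ is bounded and the same bound holds for $S^*$, the Lax--Milgram lemma (equivalently, the estimate yields $\|Su\|\geq\gamma\|u\|$ and, applied to $S^*$, dense range) shows that $S$ is boundedly invertible.

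The main obstacle, beyond the routine Gelfand-triple bookkeeping of inclusions, adjoints and the three pairings, is the final chain of inequalities establishing coercivity of $S$. Since $S$ need not be self-adjoint when $\mathcal A_0$ is not, one cannot argue by positivity alone but must work with the real part, and the three estimates must be composed in the correct direction: the G\aa rding bound controls the quadratic form from below by $\|w\|_{\mathcal V}^2$, while boundedness of $i_{\mathcal V}-\mathcal A_0$ and the closed-range bound convert $\|w\|_{\mathcal V}$ back into $\|u\|$. Getting the final estimate in terms of $\|u\|$ rather than $\|w\|$ is the delicate point on which the whole argument turns.
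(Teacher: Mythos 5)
Your proof is correct, but it takes a genuinely different route to the first assertion than the paper does. The paper first shows that the Schur complement $G(1)$ is merely accretive (inequality (\ref{eq:G1>0})), uses this to prove that ${\mathcal A}$ is maximally dissipative (since $G(1)+I$ is then boundedly invertible), then establishes the norm lower bound (\ref{eq:4-2}) by a contradiction/sequence argument combining the G\aa rding inequality (\ref{eq:22}) with the closed-range bound (\ref{eq:23}), and finally invokes the general characterization Theorem~\ref{t_commonQI} (iv) $\Rightarrow$ ii)) to conclude bounded invertibility; the ``moreover'' part is then handled by block elimination, essentially as you do. You instead prove the Schur-complement equivalence first and then upgrade accretivity to genuine coercivity, $\mathrm{Re}\langle G(1)u,u\rangle \geq \alpha_1\beta^2\|u\|^2/\|i_{\mathcal V}-{\mathcal A}_0\|^2$, by composing the G\aa rding bound, the boundedness of $i_{\mathcal V}-{\mathcal A}_0$, and (\ref{eq:23}), concluding via Lax--Milgram. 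Your argument is more elementary and self-contained (no sequence argument, no appeal to Theorem~\ref{t_commonQI}) and it yields an explicit coercivity constant for $G(1)$, which the paper never extracts. What the paper's heavier route buys is strictly more than the theorem states: maximal dissipativity of ${\mathcal A}$ and regularity of the pair $({\mathcal E}_I,{\mathcal A})$, i.e.\ bounded invertibility of $s{\mathcal E}_I-{\mathcal A}$ for \emph{all} $s\in{\mathbb C}^+$, facts that are reused later (e.g.\ in Example~\ref{ex:stokes}); your argument as written gives only $s=1$, though it would extend to other $s>0$ with $s$-dependent constants. Two small points you gloss over are harmless here: the claim that the same coercivity bound holds for $S^*$ is immediate since $\mathrm{Re}\langle S^*u,u\rangle=\mathrm{Re}\langle Su,u\rangle$, and your use of $\mathrm{Re}$ together with the modulus in (\ref{eq:22}) is consistent because the section works over real Hilbert spaces, where dissipativity makes $\langle {\mathcal A}_0w,w\rangle_{\mathcal V^*,\mathcal V}\leq 0$ so that $-\langle {\mathcal A}_0w,w\rangle_{\mathcal V^*,\mathcal V}=|\langle {\mathcal A}_0w,w\rangle_{\mathcal V^*,\mathcal V}|$.
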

\proof
The proof consists of several parts. We begin by showing that the  Schur complement
%/transfer
function $G(1):=\mathcal B_0^*(i_{{\mathbb V}} - \mathcal A_0)^{-1}\mathcal B_0+\mathcal D_0$ is accretive, i.e., for all $u \in {{\mathbb U}}$ it  holds that
\begin{equation}
\label{eq:G1>0}
  \mathrm{Re}\langle G(1)u,u\rangle \geq 0.
\end{equation}
We have
\begin{align*}
  \langle G(1)u,u\rangle =&\ \langle \mathcal B_0^*(i_{{\mathbb V}} - \mathcal A_0)^{-1}\mathcal B_0u, u \rangle +  \langle {\mathcal D}_0u,u\rangle\\
  =&\ \langle (i_{{\mathbb V}} - \mathcal A_0)^{-1}\mathcal B_0u,{\mathcal B}_0u \rangle_{{{\mathbb V}},{{\mathbb V}}^*} +  \langle {\mathcal D}_0u,u\rangle\\
   =&\ \langle v, (i_{{\mathbb V}} - \mathcal A_0)v\rangle_{{{\mathbb V}},{{\mathbb V}}^*} +  \langle {\mathcal D}_0u,u\rangle,
\end{align*}
with $v= (i_{{\mathbb V}} - \mathcal A_0)^{-1}\mathcal B_0u$. Since $-{\mathcal D}_0$ and ${\mathcal A}_0$ are dissipative, inequality (\ref{eq:G1>0}) then follows.
\smallskip

Next we show that ${\mathcal A}$ is maximally dissipative. For this we look at the equation
\[
  (I - {\mathcal A}) \begin{bmatrix} v \\ u \end{bmatrix} = \begin{bmatrix} x_1 \\ y \end{bmatrix}
\]
for an arbitrary $x_1 \in {{\mathbb X}}_1$ and $y \in {{\mathbb U}}$, where we search a solution  $\left[\begin{smallmatrix} v \\ u \end{smallmatrix} \right] \in D({\mathcal A})$. The above equation can be written as two equations
\[
  (I - {\mathcal A}_0)v - {\mathcal B}_0 u = x_1\quad \mbox {and } {\mathcal B}_0^*v + {\mathcal D}_0u + u = y.
\]
Since ${{\mathbb X}}_1 \subset {{\mathbb V}}^*$, we have by Lemma \ref{L:14} that the first equation has the solution $v \in {\mathbb V}$ given by
\begin{equation}\
\label{eq:v}
  v= ({i_{{\mathbb V}}- \mathcal A}_0)^{-1}\mathcal B_0u +(i_{{\mathbb V}} - {\mathcal A}_0)^{-1}x_1.
\end{equation}
Substituting this in the second equation leads to the following equation for $u$
\[
   {\mathcal B}_0^*({i_{{\mathbb V}}- \mathcal A}_0)^{-1}\mathcal B_0u + {\mathcal D}_0u + u + {\mathcal B}_0^*(i_{{\mathbb V}} - {\mathcal A}_0)^{-1}x_1 = y,
\]
which we can write as
\begin{equation}
\label{eq:u}
  (G(1) +I)u = y - {\mathcal B}_0^*(i_{{\mathbb V}} - {\mathcal A}_0)^{-1}x_1.
\end{equation}
By (\ref{eq:G1>0}) we have that $-G(1)$ is a dissipative operator which is bounded, and thus maximally dissipative. Hence for every $x_1 \in {{\mathbb X}}_1$ and $y \in {\mathbb U}$ the equation (\ref{eq:u}) has a unique solution, depending continuously on $x_1$ and $y$. Now $v$ is given by (\ref{eq:v}) which depends continuously on $u$ and $x_1$, and thus on $y$ and $x_1$. It remains to show that $\left[\begin{smallmatrix} v \\ u \end{smallmatrix} \right] \in D({\mathcal A})$. This follows directly since ${\mathcal A}_0v + {\mathcal B}_0 u = -x_1 +v$. So we conclude that ${\mathcal A}$ is maximally dissipative.
\smallskip

Next we show that ${\mathcal A}$ satisfies (\ref{eq:4-2}). We know that we only have to show this for ${\mathcal E}_I$, see Corollary \ref{C:8}.  If (\ref{eq:4-2}) would not hold, then there exists a sequence $\left[\begin{smallmatrix} v_n \\ u_n \end{smallmatrix} \right] \in {{\mathbb X}}_1 \oplus {{\mathbb U}}$ of norm 1, such that $\left[\begin{smallmatrix} v_n \\ u_n \end{smallmatrix} \right] \in D({\mathcal A})$ and ${\mathcal E}_I \left[\begin{smallmatrix} v_n \\ u_n \end{smallmatrix} \right]$, ${\mathcal A}\left[\begin{smallmatrix} v_n \\ u_n \end{smallmatrix} \right]$ both converge to zero. This can equivalently be formulated as $v_n \rightarrow 0$ in ${{\mathbb X}}_1$ and
\begin{equation}
\label{eq:35-i}
  \mathcal A_0 v_n + \mathcal B_0 u_n \rightarrow 0 \mbox{ in }  {\mathbb X}_1, \quad \mathcal B_0^*v_n+ \mathcal D_0u_n\rightarrow 0  \mbox{ in }  {\mathbb U}.
\end{equation}
We have the following equalities
\begin{align}
  \langle v_n, \mathcal A_0 v_n  +& \mathcal B_0 u_n \rangle_{{\mathbb X}_1} - \langle u_n,\mathcal B_0^*v_n+ \mathcal D_0u_n\rangle \nonumber \\
  &=
  \langle v_n, \mathcal A_0 v_n\rangle_{{\mathbb V}, {\mathbb V}^*} +  \langle v_n,  \mathcal B_0 u_n \rangle_{{\mathbb V}, {\mathbb V}^*} - \langle u_n,\mathcal B_0^*v_n\rangle_{{\mathbb U}} -\langle u_n, \mathcal D_0u_n\rangle\nonumber \\
  &= \langle v_n, \mathcal A_0 v_n \rangle_{{\mathbb V}, {\mathbb V}^*} -  \langle u_n, \mathcal D_0u_n \rangle_{{\mathbb U}} .\label{aboveidentity}
\end{align}

 By (\ref{eq:35-i}) both summands in the left-most term converge to zero, and thus also the sum in the right-most term. Since the spaces are real and the operators $\mathcal A_0$ and $-\mathcal D_0$ are dissipative, $- \langle u_n, \mathcal D_0u_n \rangle_{{\mathbb U}}$ and $\langle v_n, \mathcal A_0 v_n \rangle_{{\mathbb V}, {\mathbb V}^*}$ take values in $(-\infty,0]$. This shows that $\langle v_n \mathcal A_0 v_n \rangle_{{\mathbb V}, {\mathbb V}^*} \rightarrow 0$ as $n \rightarrow \infty$.
Combining this with $v_n \rightarrow 0$ in ${{\mathbb X}}_1$ and the G\aa rding inequality (\ref{eq:22}) gives $v_n \rightarrow 0$ in ${{\mathbb V}}$. Since $\mathcal A_0$ is bounded from ${\mathbb V}$ to ${\mathbb V}^*$, and since ${\mathbb X}_1 \subsethook   {\mathbb V}^*$, we find that $\mathcal A_0v_n \rightarrow 0$ in ${\mathbb V}^*$ { as $n\to \infty$}.
The first relation in equation (\ref{eq:35-i}) gives that $\mathcal B_0u_n \rightarrow 0$ in ${\mathbb V}^*$.

 Since $v_n \rightarrow 0$ in ${\mathbb V}$ and since ${\mathcal B}_0$ is bounded from ${\mathbb V}$ to ${\mathbb U}$, we have that ${\mathcal B}_0v_n \rightarrow 0$ in ${\mathbb U}$. The second relation in equation (\ref{eq:35-i}) gives that $\mathcal D_0u_n \rightarrow 0$ in ${\mathbb U}$. Since $\mathcal B_0u_n$ and $\mathcal D_0u_n$ converge to zero, inequality (\ref{eq:23}) gives that $u_n \rightarrow 0$.
  Combined with $v_n \rightarrow 0$ in ${{\mathbb X}}_1$ this is in contradiction to the assumption that
$\left\|\left[\begin{smallmatrix} v_n \\ u_n \end{smallmatrix} \right] \right|_{{{\mathbb X}}_1 \oplus {{\mathbb U}}} =1$. Hence (\ref{eq:4-2}) holds.
\smallskip

\smallskip

So we have shown that $({\mathcal E}_I, {\mathcal A})$ satisfies the condition of Theorem \ref{t_commonQI}.iv), and thus also that $({\mathcal E}, {\mathcal A}{\mathcal Q})$ is regular. It remains to prove the last assertion of the theorem.
\smallskip

To prove that the invertibility of $G(1)$ implies the invertibility of $\mathcal E_I-{\mathcal A}$, we proceed similar to the first item in this proof. Namely, the equation $(\mathcal E_I-{\mathcal A})\left[\begin{smallmatrix} v \\u \end{smallmatrix} \right] = \left[\begin{smallmatrix} x_1 \\y \end{smallmatrix} \right] $ with
$\left[\begin{smallmatrix} v \\u \end{smallmatrix} \right] \in D({\mathcal A})$ gives that $v$ is given by (\ref{eq:v}) and $u$ satisfies (see also (\ref{eq:u}))
\[
  -G(1) u = y + {\mathcal B}_0^*(i_{{\mathbb V}} - {\mathcal A})^{-1} x_1
\]
From this and equation (\ref{eq:v}) it follows that $\mathcal E_I-{\mathcal A}$ is boundedly invertible when $G(1)$ is. It remains to show the opposite direction.

Assuming that $\mathcal E_I-{\mathcal A}$ is boundedly invertible gives that there is a unique and continuous mapping from $y \in {\mathbb U}$ to $\left[\begin{smallmatrix} v \\u \end{smallmatrix} \right] \in {{\mathbb X}}_1 \oplus {{\mathbb U}} $ such that $\left[\begin{smallmatrix} v \\u \end{smallmatrix} \right] \in D({\mathcal A})$, and
\[
  (\mathcal E_I-{\mathcal A}) \begin{bmatrix} v \\u \end{bmatrix}  = \begin{bmatrix} 0 \\y  \end{bmatrix}.
\]
Using Lemma \ref{L:14} we can solve this equation, and find $v=(i_{{\mathbb V}} - {\mathcal A})^{-1} {\mathcal B}_0u$ and $y = G(1) u$. This gives that there exists a continuous mapping from $y$ to $u$, and thus $G(1)$ is boundedly invertible.
\hfill\eproof
\medskip

From the above proof, we see that ${\mathcal D}_0$ being self-adjoint, which is a typical property in many applications, see e.g.\ \cite{EmmM13}, was only needed in one step. Alternatively, we could have assumed that ${\mathcal A}_0$ was self-adjoint. Of course both operators need to be dissipative.
Property (\ref{eq:G1>0}) is a special case of a general property which these systems likely have, namely that $G(s)$ is positive real, i.e., Re$\langle G(s) u, u\rangle \geq 0$ whenever $s \in {\mathbb C}^+$.

\begin{remark}\label{remsysnode}{\rm
In a recent paper, \cite{ReiS23}, it is shown that the formulation that we have discussed can also be formulated in terms of system nodes, see Definition 4.7.2 in \cite{Staf05}. The assumption on ${\mathcal A}$ as stated in Lemma~\ref{L:9} is the condition used in \cite{ZwGM16}.

%, see also the Maxwell class in %Subsection \ref{sec:4.3}. For system nodes, , we have the following result. For the proof we refer to Lemma 4.7.18 of that book.
%\begin{theorem}
%\label{T:11}
%Let ${\mathcal A}= \begin{bmatrix} \mathcal A_0\&\mathcal B_0\\ -C_0\&\mathcal D_0 \end{bmatrix}$ be an operator node which is dissipative. Then $sE_I-{\mathcal A}$ is boundedly invertible if and only if the transfer function is invertible at $s$.
%\end{theorem}
%Regarding this result, remarks can be made. By the assumption that
%${\mathcal A}=  \left[\begin{smallmatrix} \mathcal A_0\&\mathcal B_0\\ -C_0\&\mathcal D_0 \end{smallmatrix}\right]$ is dissipative, it follows easily that the main operator $\mathcal A_0x_1 :=\begin{bmatrix} \mathcal A_0\&\mathcal B_0\end{bmatrix}\left[\begin{smallmatrix} x_1\\ 0\end{smallmatrix}\right]$ is dissipative as well. Since by assumption of a system node $sI-\mathcal A_0$ has a bounded inverse,
%we see by the Lumer-Phillip theorem that this main %operator is the generator of a %contraction semigroup, and thus our %${\mathcal A}$ is a system node. %Furthermore, the system that we get %by removing the minus in the second %row of ${\mathcal A}$ is impedance %passive, see \cite{Staf02}.
%
}
\end{remark}

Using the property of the block structured pencil in \eqref{eq:50}
we have the following well-known inf-sup conditions when ${\mathcal D}_0 =0$.
\begin{theorem}\label{thm:divfree}
Consider the operator in \eqref{eq:50} and define ${\mathbb V}_0 \subset {\mathbb V}$ as ${{\mathbb V}}_0 =\ker \mathcal B_0^*$. Then
\begin{equation}
\label{eq:52}
    \inf_{0\neq v \in {\mathbb V}_0} \sup_{0\neq w \in {\mathbb V}_0}  \frac{\langle \mathcal A_0 v, w \rangle_{{\mathbb V}^*,{\mathbb V}}}{\|v\|\|w\|}  \geq \alpha >0 ,\quad   \inf_{0\neq v \in {\mathbb V}_0} \sup_{0\neq w \in {\mathbb V}_0}  \frac{\langle \mathcal A_0 w, v \rangle_{{\mathbb V}^*,{\mathbb V}}}{\|v\|\|w\|}  \geq \alpha >0,
 \end{equation}
\begin{equation}
\label{eq:53}
    \inf_{0\neq u \in U} \sup_{0\neq v \in {\mathbb V}} \frac{\langle \mathcal B_0u, v\rangle_{\mathcal {\mathbb V}^*,V}}{\|u\|_{{\mathbb U}}\|v\|_{{\mathbb V}}} \geq \gamma >0,
  \end{equation}
  and the pair $(\mathcal E_I,{\mathcal A})$ is regular.
 \end{theorem}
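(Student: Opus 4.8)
The plan is to treat the three assertions separately, reading the theorem under the structural hypotheses of the preceding results, i.e.\ the setting of Theorem~\ref{T:15}: $\mathcal A_0$ dissipative and satisfying the G\aa rding inequality \eqref{eq:22}, $\mathcal B_0$ of closed range \eqref{eq:23}, and $\mathcal D_0$ self-adjoint with $\mathcal D_0\geq 0$.

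First I would dispose of \eqref{eq:53}, which is really the closed-range hypothesis \eqref{eq:23} in disguise. Since $\mathcal V$ is a Hilbert space, the definition of the dual norm gives, for each fixed $u\in\mathcal U$, the identity $\sup_{0\neq v\in\mathcal V}\frac{\langle \mathcal B_0 u,v\rangle_{\mathcal V^*,\mathcal V}}{\|v\|_{\mathcal V}}=\|\mathcal B_0 u\|_{\mathcal V^*}$. Taking the infimum over $u$ turns \eqref{eq:53} into exactly $\|\mathcal B_0 u\|_{\mathcal V^*}\geq \gamma\|u\|_{\mathcal U}$, so \eqref{eq:53} holds with $\gamma=\beta$. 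This step is routine.

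The core of the theorem is \eqref{eq:52}, the inf-sup stability of $\mathcal A_0$ on the constraint space $\mathcal V_0=\ker\mathcal B_0^*$. Here I would exploit dissipativity together with a symmetric choice of test function: for $v\in\mathcal V_0$, taking $w=-v$ gives $\langle \mathcal A_0 v,w\rangle_{\mathcal V^*,\mathcal V}=-\langle \mathcal A_0 v,v\rangle_{\mathcal V^*,\mathcal V}=|\langle\mathcal A_0 v,v\rangle_{\mathcal V^*,\mathcal V}|\geq 0$, and the same $w$ handles the transposed inf-sup, since then also $\langle \mathcal A_0 w,v\rangle_{\mathcal V^*,\mathcal V}=-\langle \mathcal A_0 v,v\rangle_{\mathcal V^*,\mathcal V}$. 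Thus both inequalities in \eqref{eq:52} reduce to a single coercivity statement on the kernel, namely $-\langle \mathcal A_0 v,v\rangle_{\mathcal V^*,\mathcal V}\geq \alpha\|v\|_{\mathcal V}^2$ for all $v\in\mathcal V_0$. I expect establishing this coercivity to be the main obstacle: the G\aa rding inequality \eqref{eq:22} only yields $-\langle\mathcal A_0 v,v\rangle_{\mathcal V^*,\mathcal V}\geq \alpha_1\|v\|_{\mathcal V}^2-\|v\|_{\mathcal X_1}^2$, and the lower-order term $\|v\|_{\mathcal X_1}^2$ must be absorbed. On $\mathcal V_0$ this is precisely the coercivity on the divergence-free subspace that is available in the Stokes and Oseen applications; in the abstract setting I would obtain it from \eqref{eq:22}, dissipativity and injectivity of $\mathcal A_0$ on $\mathcal V_0$ by a Fredholm/contradiction argument, using compactness of the embedding $\mathcal V\hookrightarrow\mathcal X_1$ to upgrade the G\aa rding estimate to genuine coercivity on $\mathcal V_0$.

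Finally, for the regularity of $(\mathcal E_I,\mathcal A)$ I would offer two complementary routes. The direct one simply invokes Theorem~\ref{T:15}: under the present hypotheses $\mathcal E_I-\mathcal A$ is boundedly invertible, which is the definition of regularity. To make the role of \eqref{eq:52}--\eqref{eq:53} transparent, I would alternatively run the classical saddle-point (Brezzi/LBB) argument: the inf-sup bound \eqref{eq:53} for $\mathcal B_0$ together with the coercivity of the shifted form $i_{\mathcal V}-\mathcal A_0$ on $\mathcal V_0$ --- which follows from \eqref{eq:22} and dissipativity since $\langle(i_{\mathcal V}-\mathcal A_0)v,v\rangle_{\mathcal V^*,\mathcal V}=\|v\|_{\mathcal X_1}^2+|\langle\mathcal A_0 v,v\rangle_{\mathcal V^*,\mathcal V}|\geq\alpha_1\|v\|_{\mathcal V}^2$, exactly as in the proof of Theorem~\ref{T:15} --- deliver the a priori estimate \eqref{eq:4-2} required by Theorem~\ref{t_commonQI}.iv), and hence regularity. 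I would present the first route as the clean conclusion while emphasizing that \eqref{eq:52} and \eqref{eq:53} are the standard reformulation of these hypotheses on the constraint space $\mathcal V_0$.
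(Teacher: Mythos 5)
The first thing to note is that the paper does not actually prove Theorem~\ref{thm:divfree}: its entire proof is the citation ``See e.g.\ \cite{Tem77}''. The theorem is a recall of the classical Babu\v{s}ka--Brezzi (LBB) saddle-point theory: \eqref{eq:52}--\eqref{eq:53} are the well-known inf-sup conditions, verified in Temam's book for the Stokes/Oseen structure, and regularity of $(\mathcal E_I,{\mathcal A})$ is the standard consequence. In other words, the inf-sup conditions play the role of known facts about the application class, not of assertions to be derived from the hypotheses of Theorem~\ref{T:15}. Your proposal instead tries to \emph{derive} \eqref{eq:52} from dissipativity, the G\aa rding inequality \eqref{eq:22} and the closed-range condition \eqref{eq:23}, and that is where it breaks down.

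The gap is genuine: under exactly the hypotheses you adopt, \eqref{eq:52} is false in general. Take $\mathcal V=\mathcal X_1={\mathbb R}^2$ with identical norms, $\mathcal U={\mathbb R}$, $\mathcal A_0=\left[\begin{smallmatrix}-1&0\\0&0\end{smallmatrix}\right]$, $\mathcal B_0=\left[\begin{smallmatrix}1\\0\end{smallmatrix}\right]$, $\mathcal D_0=0$. Then $\mathcal A_0$ is dissipative and satisfies \eqref{eq:22} with $\alpha_1=1$, $\mathcal B_0$ satisfies \eqref{eq:23} with $\beta=1$, and $\mathcal D_0\geq 0$ is self-adjoint, so Theorem~\ref{T:15} applies and $(\mathcal E_I,{\mathcal A})$ is regular; yet $\mathcal V_0=\ker\mathcal B_0^*=\mathrm{span}\{(0,1)^T\}$ and $\mathcal A_0$ vanishes on $\mathcal V_0$, so the inf-sup quantity in \eqref{eq:52} equals zero. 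Hence no argument from those hypotheses can establish \eqref{eq:52}; the kernel coercivity $-\langle\mathcal A_0v,v\rangle_{\mathcal V^*,\mathcal V}\geq\alpha\|v\|_{\mathcal V}^2$ you reduce it to (which is even stronger than inf-sup) is genuinely additional structural information --- in the Stokes case it holds because $-\Delta$ is coercive on all of $H_0^1(\Omega)$ by the Poincar\'e inequality, not because of \eqref{eq:22}. Your proposed rescue, a Fredholm/contradiction argument using compactness of the embedding $\mathcal V\hookrightarrow\mathcal X_1$ and injectivity of $\mathcal A_0$ on $\mathcal V_0$, invokes two hypotheses that appear nowhere in the paper; and even granting them, passing to the limit requires weak lower semicontinuity of $v\mapsto-\langle\mathcal A_0v,v\rangle_{\mathcal V^*,\mathcal V}$ (essentially self-adjointness of $\mathcal A_0$), which is also not available.

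For completeness: two pieces of your proposal do stand. The dual-norm identity showing that \eqref{eq:53} is exactly the closed-range inequality \eqref{eq:23} is correct (though be aware that in the applications this closed-range property \emph{is} the hard LBB fact, which is what \cite{Tem77} actually proves); and both of your routes to the regularity of $(\mathcal E_I,{\mathcal A})$ are sound --- the Brezzi-type estimate is just a re-run of the proof of Theorem~\ref{T:15} and, as you correctly observe, it needs only the coercivity of $i_{\mathcal V}-\mathcal A_0$ coming from \eqref{eq:22}, not the condition \eqref{eq:52} itself.
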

 \proof
 See e.g. \cite{Tem77}.
 \hfill\eproof
% \medskip
%
%We have seen that the regularity of $(\mathcal E,{\mathcal A} \mathcal Q)$ is  equivalent to that of  $(\mathcal E_I,{\mathcal A} )$. To show this, we have used the special form and the properties of $\mathcal Q$ and $\mathcal E$ as given in Assumption \ref{A1:HZ}. In the following lemma we study what the regularity of $(\mathcal E,{\mathcal A} \mathcal Q)$ implies on that of $(\mathcal E,\mathcal Q)$ when we do not have this special form of $\mathcal E$ and $\mathcal Q$, see \cite{MehMW18} for a corresponding result in the finite dimensional case.
%
%\begin{lemma}
%\label{L:10}
%Consider the adHDAE system~\eqref{eq:adae} with
%$(\mathcal E,{\mathcal A} \mathcal Q)$ regular
%for an $s_a \in {\mathbb C}^+$
%and  $\mathcal  E^*\mathcal Q$ coercive. Then
%\begin{enumerate}
%\item[a)]
%  $\ker(s\mathcal E + \mathcal Q) =\{0\}$ for %all $s \in {\mathbb C}^+$;
%\item[b)]
%  The range of $s\mathcal E+\mathcal Q$ is dense %in {\color{blue} $\mathbb X$}.
%\end{enumerate}
%\end{lemma}
%\proof
%a)\ Let $s \in {\mathbb C}^+$ and take $0\neq x %\in \ker(s\mathcal E + \mathcal Q)$. Then $s %\mathcal Ex= -\mathcal Qx$, and taking the inner %product with $\mathcal Ex$ gives
%\[
%  \overline{s} \|\mathcal Ex\|^2 = - \langle %\mathcal Ex, \mathcal Q x\rangle = -\langle %x,\mathcal E^*\mathcal Q x\rangle.
%\]
%Since $\mathcal E^*\mathcal Q\geq 0$ and since %Re$(s) >0$ we find that $\mathcal Ex=0$, and %thus $\mathcal Qx=0$. This implies that %$(s\mathcal E - {\mathcal A} \mathcal Q)x=0$, %yielding a contradiction to the regularity of %$(\mathcal E,{\mathcal A} \mathcal Q)$.
%\eproof

%%%%%%%%%%%%%
\section{Existence of solutions on the whole space}
\label{sec:3}

In this section we study the solution of adHDAEs of the form (\ref{eq:1HZ}).  Since $x_2$ is a constraint to $x_1$, we concentrate on the  solution theory for $x_1$ first. Our first result is based on  the extra assumption that the last row in (\ref{eq:1HZ}) does not impose a condition on $x_1$, i.e., for every $x_1$ there exists an $x_2$ such that this condition is satisfied. This implies that the algebraic equations impose no restriction on the state space ${\mathbb X}_1$. The case in which that may happen is studied in Section \ref{sec:5}.

We define the following reduced state space
\begin{equation}
\label{eq:2a}
  {\mathbb X}_{1,\mathcal E \mathcal Q} = {\mathbb X}_1 \mbox{ with inner product } \langle x_1, \tilde{x}_1 \rangle_{\mathcal E\mathcal Q} = \langle x_1,\mathcal E_1^* \mathcal Q_1 \tilde{x}_1 \rangle,
\end{equation}
where the second inner product is the standard inner product of ${\mathbb X}_1$. Since $\mathcal E_1^*\mathcal Q_1$ is coercive, the new norm is equivalent to the original one.
\begin{theorem}
\label{T:2HZ}
Consider a adHDAE system of the form~\eqref{eq:1HZ} with
$(\mathcal E,{\mathcal A} \mathcal Q)$ regular satisfying
Assumption~\ref{A1:HZ} and  assume that whenever $\left[\begin{smallmatrix} 0 \\ x_2 \end{smallmatrix}\right] \in D({\mathcal A})$ is such that ${\mathcal A}_2  \left[\begin{smallmatrix} 0 \\ x_2 \end{smallmatrix}\right] =0$, then $x_2=0$.

Under these assumptions, the operator $\mathcal A_{red} : D(\mathcal A_{red}) \subset {\mathbb X}_{1,\mathcal E\mathcal Q} \rightarrow {\mathbb X}_{1,\mathcal E\mathcal Q}$ generates a contraction semigroup on the reduced space ${\mathbb X}_{1,\mathcal E \mathcal Q}$, where the domain $D(\mathcal A_{red})$ is defined as
\begin{equation}
\label{eq:3:HZ}
  D(\mathcal A_{red}) = \{ x_1 \in {\mathbb X}_{1,\mathcal E\mathcal Q} \mid \exists\ x_2 \in\mathcal  X_2 \mbox{ such that } \begin{bmatrix} \mathcal Q_1 x_1 \\ \mathcal Q_2 x_2 \end{bmatrix} \in D({\mathcal A}) \mbox{ and } {\mathcal A}_2  \begin{bmatrix} \mathcal Q_1x_1 \\  \mathcal Q_2 x_2 \end{bmatrix} =0 \} ,
\end{equation}
and for $x_{1} \in D(\mathcal A_{red})$ the action of $\mathcal A_{red}$ is defined as
\begin{equation}
\label{eq:4:HZ}
  \mathcal A_{red}x_1 = \mathcal E_1^{-1} {\mathcal A}_1  \begin{bmatrix} \mathcal Q_1x_1 \\ \mathcal Q_2x_2 \end{bmatrix}.
\end{equation}
\end{theorem}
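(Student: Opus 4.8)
The plan is to verify the hypotheses of the Lumer--Phillips theorem for $\mathcal{A}_{red}$ on the Hilbert space $\mathcal{X}_{1,\mathcal{E}\mathcal{Q}}$, whose norm is equivalent to that of $\mathcal{X}_1$ by coercivity of $\mathcal{E}_1^*\mathcal{Q}_1$. Concretely, I would show that $\mathcal{A}_{red}$ is well defined, that it is dissipative with respect to $\langle\cdot,\cdot\rangle_{\mathcal{E}\mathcal{Q}}$, and that $sI-\mathcal{A}_{red}$ is boundedly invertible for some (indeed every) $s\in\mathbb{C}^+$. As in the proof of Lemma~\ref{L2.1a}, dissipativity together with bounded invertibility of $sI-\mathcal{A}_{red}$ then yields that $\mathcal{A}_{red}$ is maximally dissipative, hence closed and densely defined, so that Lumer--Phillips gives the contraction semigroup.

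Well-definedness is where the extra hypothesis enters. For $x_1\in D(\mathcal{A}_{red})$ there is, by~\eqref{eq:3:HZ}, an $x_2$ with $\left[\begin{smallmatrix}\mathcal{Q}_1x_1\\ \mathcal{Q}_2x_2\end{smallmatrix}\right]\in D(\mathcal{A})$ and $\mathcal{A}_2\left[\begin{smallmatrix}\mathcal{Q}_1x_1\\ \mathcal{Q}_2x_2\end{smallmatrix}\right]=0$. If $x_2'$ were a second such vector, then $\left[\begin{smallmatrix}0\\ \mathcal{Q}_2(x_2-x_2')\end{smallmatrix}\right]\in D(\mathcal{A})$ and $\mathcal{A}_2\left[\begin{smallmatrix}0\\ \mathcal{Q}_2(x_2-x_2')\end{smallmatrix}\right]=0$, so the hypothesis forces $\mathcal{Q}_2(x_2-x_2')=0$, and bounded invertibility of $\mathcal{Q}_2$ gives $x_2=x_2'$. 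Thus $x_2$, and with it the value~\eqref{eq:4:HZ}, depends only on $x_1$. For dissipativity I would then compute, using $(\mathcal{E}_1^{-1})^*\mathcal{E}_1^*=I$,
\[
  \langle \mathcal{A}_{red}x_1, x_1\rangle_{\mathcal{E}\mathcal{Q}} = \left\langle \mathcal{E}_1^{-1}\mathcal{A}_1\left[\begin{smallmatrix}\mathcal{Q}_1x_1\\ \mathcal{Q}_2x_2\end{smallmatrix}\right], \mathcal{E}_1^*\mathcal{Q}_1x_1\right\rangle = \left\langle \mathcal{A}_1\left[\begin{smallmatrix}\mathcal{Q}_1x_1\\ \mathcal{Q}_2x_2\end{smallmatrix}\right], \mathcal{Q}_1x_1\right\rangle.
\]
Since $\mathcal{A}_2\left[\begin{smallmatrix}\mathcal{Q}_1x_1\\ \mathcal{Q}_2x_2\end{smallmatrix}\right]=0$, I may adjoin the vanishing second block to obtain $\langle \mathcal{A}_{red}x_1,x_1\rangle_{\mathcal{E}\mathcal{Q}} = \left\langle \mathcal{A}\left[\begin{smallmatrix}\mathcal{Q}_1x_1\\ \mathcal{Q}_2x_2\end{smallmatrix}\right], \left[\begin{smallmatrix}\mathcal{Q}_1x_1\\ \mathcal{Q}_2x_2\end{smallmatrix}\right]\right\rangle$, whose real part is nonpositive since $\mathcal{A}$ is dissipative. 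For $s\in\mathbb{C}^+$ this also yields the lower bound $\|(sI-\mathcal{A}_{red})x_1\|_{\mathcal{E}\mathcal{Q}}\ge \mathrm{Re}(s)\,\|x_1\|_{\mathcal{E}\mathcal{Q}}$, so $sI-\mathcal{A}_{red}$ is injective with bounded inverse on its range.

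The remaining point is surjectivity. Given $y_1\in\mathcal{X}_1$ and $s\in\mathbb{C}^+$, solving $(sI-\mathcal{A}_{red})x_1=y_1$ is, after multiplying by $\mathcal{E}_1$ and appending the constraint, equivalent to finding $\left[\begin{smallmatrix}x_1\\ x_2\end{smallmatrix}\right]$ with $(s\mathcal{E}-\mathcal{A}\mathcal{Q})\left[\begin{smallmatrix}x_1\\ x_2\end{smallmatrix}\right]=\left[\begin{smallmatrix}\mathcal{E}_1y_1\\ 0\end{smallmatrix}\right]$; the second block of this system reads $\mathcal{A}_2\left[\begin{smallmatrix}\mathcal{Q}_1x_1\\ \mathcal{Q}_2x_2\end{smallmatrix}\right]=0$, which is exactly the defining constraint of $D(\mathcal{A}_{red})$. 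Since $(\mathcal{E},\mathcal{A}\mathcal{Q})$ is regular, Theorem~\ref{t_commonQI} makes $s\mathcal{E}-\mathcal{A}\mathcal{Q}$ boundedly invertible for every $s\in\mathbb{C}^+$, so a unique such pair exists, its first component lies in $D(\mathcal{A}_{red})$, and it solves the reduced equation. Combined with the injectivity above, $sI-\mathcal{A}_{red}$ is boundedly invertible.

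I expect the step requiring most care to be the density of $D(\mathcal{A}_{red})$: it is not automatic but follows, via the appendix results already invoked in Lemma~\ref{L2.1a}, from the maximal dissipativity established above (a maximally dissipative operator on a Hilbert space is closed and densely defined). Everything else reduces, through the identity $(s\mathcal{E}-\mathcal{A}\mathcal{Q})\left[\begin{smallmatrix}x_1\\ x_2\end{smallmatrix}\right]=\left[\begin{smallmatrix}\mathcal{E}_1y_1\\ 0\end{smallmatrix}\right]$, to the already known regularity of the full pair and to the weighted dissipativity computation, so that an application of Lumer--Phillips completes the proof.
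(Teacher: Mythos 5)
Your proposal is correct and follows essentially the same route as the paper's own proof: well-definedness of $\mathcal A_{red}$ from the injectivity hypothesis on $\mathcal A_2$, dissipativity in the weighted inner product $\langle\cdot,\cdot\rangle_{\mathcal E\mathcal Q}$ by adjoining the vanishing second block, surjectivity of $sI-\mathcal A_{red}$ by pulling the equation back to the boundedly invertible operator $s\mathcal E-\mathcal A\mathcal Q$, and the Lumer--Phillips theorem to conclude. Your added remarks (the resolvent estimate from dissipativity, and density of $D(\mathcal A_{red})$ via maximal dissipativity using the appendix lemmas) are correct refinements of steps the paper leaves implicit, not a different argument.
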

\proof
First we have to prove that $\mathcal A_{red}$ is well-defined. So if for a given $x_1\in D(\mathcal A_{red})$ we have that $x_2$ and $\tilde{x}_2$ are such that the condition of the domain are satisfied for  $\left[\begin{smallmatrix} x_1 \\ x_2 \end{smallmatrix}\right]$ and $\left[\begin{smallmatrix} x_1 \\ \tilde{x}_2 \end{smallmatrix} \right]$, then by the linearity of ${\mathcal A}_2$, we have that
\[
  {\mathcal A}_2  \begin{bmatrix} 0 \\ \mathcal Q_2(x_2 - \tilde{x}_2)\end{bmatrix} =0.
\]
By assumption, this implies that $\mathcal Q_2(x_2-\tilde{x}_2) =0$, and since $\mathcal Q_2$ is invertible, then $x_2-\tilde{x}_2 =0$. So there exists at most one $x_2$ to every $x_1 \in D(\mathcal A_{red})$, and hence $\mathcal A_{red}$ is well-defined.

 Using that $\mathcal E_1^*\mathcal Q_1 = (\mathcal E_1^*\mathcal Q_1 )^*$, we have
\begin{align*}
  \langle \mathcal A_{red}x_1, x_1\rangle_{\mathcal E\mathcal Q} + \langle x_1, \mathcal A_{red}x_1, \rangle_{\mathcal E\mathcal Q} =&
  \langle \mathcal A_{red}x_1, \mathcal E_1^*\mathcal Q_1 x_1\rangle + \langle  \mathcal E_1^* \mathcal Q_1 x_1,  \mathcal A_{red}x_1\rangle\\
   =&\ \langle \mathcal E_1^{-1} { {\mathcal A}}_1  \begin{bmatrix} \mathcal Q_1x_1 \\ \mathcal Q_2x_2 \end{bmatrix}, \mathcal E_1^*\mathcal Q_1 x_1\rangle + \langle \mathcal E_1^* \mathcal Q_1 x_1, \mathcal E_1^{-1} {\mathcal A}_1  \begin{bmatrix} \mathcal Q_1x_1 \\ \mathcal Q_2x_2 \end{bmatrix} \rangle\\
  =&\ \langle {\mathcal A}  \begin{bmatrix} \mathcal Q_1x_1  \\ \mathcal Q_2 x_2 \end{bmatrix}, \begin{bmatrix} \mathcal Q_1x_1 \\ \mathcal Q_2 x_2 \end{bmatrix}\rangle + \langle \begin{bmatrix} \mathcal Q_1 x_1 \\ \mathcal Q_2 x_2 \end{bmatrix}, {\mathcal A}  \begin{bmatrix} \mathcal Q_1 x_1  \\ \mathcal Q_2 x_2 \end{bmatrix} \rangle\\
  \leq &\ 0,
  \end{align*}
  where we have used ${\mathcal A}_2 \left[ \begin{smallmatrix} \mathcal Q_1 x_1 \\ \mathcal Q_2 x_2 \end{smallmatrix} \right] =0$ and the dissipativity of ${\mathcal A}$.
  Hence $\mathcal A_{red}$ is dissipative.

 Now we show that $sI-\mathcal A_{red}$ is onto for $s \in {\mathbb C}^+$.
  Given $\left[\begin{smallmatrix} y_1 \\ 0 \end{smallmatrix} \right] \in {\mathbb X}$. Then by assumption, see also Corollary \ref{C:8}, we know that there exists a $\left[\begin{smallmatrix} x_1 \\ x_2  \end{smallmatrix} \right]\in D({\mathcal A}\mathcal Q)$ such that
  \begin{equation}
  \label{eq:5HZ}
    \begin{bmatrix} \mathcal E_1 y_1 \\ 0 \end{bmatrix} = (s\mathcal E - {\mathcal A}\mathcal Q) \begin{bmatrix} x_1 \\ x_2 \end{bmatrix} = s \begin{bmatrix} \mathcal E_1 x_1 \\ 0 \end{bmatrix} - {\mathcal A}\begin{bmatrix} \mathcal Q_1 x_1 \\ \mathcal Q_2 x_2 \end{bmatrix} .
  \end{equation}
  The last row of this expression gives that
  \[
    {\mathcal A}_2  \begin{bmatrix} \mathcal Q_1 x_1 \\ \mathcal Q_2 x_2 \end{bmatrix} =0
  \]
  and so $x_1 \in D(\mathcal A_{red})$. The top row of \eqref{eq:5HZ} gives
  \[
    s\mathcal E_1 x_1- {\mathcal A}_1 \begin{bmatrix} \mathcal Q_1 x_1 \\ \mathcal Q_2 x_2 \end{bmatrix} = \mathcal E_1 y_1
  \]
  or equivalently (using \eqref{eq:3:HZ} and that $\mathcal E_1$ is boundedly invertible)
    $(sI -\mathcal A_{red})x_1 =  y_1$. This gives that $sI-\mathcal A_{red}$ is surjective for  $s \in {\mathbb C}^+$. By the Lumer-Phillips Theorem, see e.g.~\cite{Staf05}, we conclude that $\mathcal A_{red}$ generates a contraction semigroup on ${\mathbb X}_1$.
  \hfill
\eproof
\medskip

In the proof of Theorem~\ref{T:2HZ}, we did not use the  regularity of the pair $(\mathcal E,{\mathcal A}\mathcal Q)$ to show that $\mathcal A_{red}$ is well-defined and dissipative. It was used only to prove the surjectivity of $sI-\mathcal A_{red}$. Since the latter is the property we want for $\mathcal A_{red}$, we can ask if our regularity assumption is not too strong. The following lemma shows that under a mild condition the two properties are equivalent.
\begin{lemma}
\label{L:11}
 Let the first three conditions of Assumption \ref{A1:HZ} hold. Furthermore, we
 assume that whenever $\left[\begin{smallmatrix} 0 \\ x_2 \end{smallmatrix}\right] \in D({\mathcal A})$ is such that ${\mathcal A}_2  \left[\begin{smallmatrix} 0 \\ x_2 \end{smallmatrix}\right] =0$, then $x_2=0$. Consider the operator $\mathcal A_{red}$ on the domain (\ref{eq:3:HZ}) with the action (\ref{eq:4:HZ}). Then the following are equivalent:
 \begin{itemize}
 \item [a)]
   The pair $(\mathcal E,{\mathcal A} \mathcal Q)$ is regular.
 \item [b)]
   $\mathcal A$ is closed, ${\mathcal A}_2 : D(\mathcal A) \mapsto {\mathbb X}_2$ is surjective, and there exists an $s \in {\mathbb C}^+$ such $sI-\mathcal A_{red}$ is boundedly invertible;
% \item
%   ${\mathcal A}_2 : D({\mathcal A}) \mapsto X_2$ is right-ivertible and there exists an $s \in {\mathbb C}^+$ such $sI-A$ is boundedly invertible;
\item [c)]
   $\mathcal A$ is closed, ${\mathcal A}_2 : D(\mathcal A) \mapsto {\mathbb X}_2$ is surjective, and there exists an $s \in {\mathbb C}^+$ such $sI-\mathcal A_{red}$ is surjective;
\item [d)]
    $\mathcal A$ is closed, ${\mathcal A}_2: D(\mathcal A) \mapsto {\mathbb X}_2$ is surjective and $\mathcal A_{red}$ is maximally dissipative.
\end{itemize}
\end{lemma}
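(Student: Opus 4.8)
The plan is to prove the four items equivalent through the cycle (a) $\Rightarrow$ (b) $\Rightarrow$ (c) $\Rightarrow$ (d) $\Rightarrow$ (a). Before entering the cycle I would record a preliminary observation that holds in all four items: the standing hypothesis that $\mathcal{A}_2\left[\begin{smallmatrix} 0 \\ x_2\end{smallmatrix}\right]=0$ forces $x_2=0$ is exactly what makes the choice of $x_2$ in \eqref{eq:3:HZ}--\eqref{eq:4:HZ} unique, so $\mathcal{A}_{red}$ is well defined; moreover the computation in the proof of Theorem~\ref{T:2HZ} shows $\mathcal{A}_{red}$ is \emph{dissipative} without ever invoking regularity. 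Thus in each item $\mathcal{A}_{red}$ is a well-defined dissipative operator, and the content of the equivalences concerns only its range/invertibility together with the closedness of $\mathcal{A}$ and the surjectivity of $\mathcal{A}_2$.

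For (a) $\Rightarrow$ (b), regularity is precisely Assumption~\ref{A1:HZ}.iv), so Lemma~\ref{L2.1a} gives that $\mathcal{A}$ is maximally dissipative, in particular closed, and all hypotheses of Theorem~\ref{T:2HZ} are met; hence $\mathcal{A}_{red}$ generates a contraction semigroup and $sI-\mathcal{A}_{red}$ is boundedly invertible for every $s\in\mathbb{C}^+$. For the surjectivity of $\mathcal{A}_2$ I would fix $s\in\mathbb{C}^+$ with $s\mathcal{E}-\mathcal{A}\mathcal{Q}$ boundedly invertible, solve $(s\mathcal{E}-\mathcal{A}\mathcal{Q})\left[\begin{smallmatrix} x_1 \\ x_2\end{smallmatrix}\right]=\left[\begin{smallmatrix} 0 \\ -y_2\end{smallmatrix}\right]$ for an arbitrary $y_2\in\mathcal{X}_2$, and read off from the second block row that $\mathcal{A}_2\left[\begin{smallmatrix}\mathcal{Q}_1 x_1 \\ \mathcal{Q}_2 x_2\end{smallmatrix}\right]=y_2$ with $\mathcal{Q}\left[\begin{smallmatrix}x_1\\x_2\end{smallmatrix}\right]\in D(\mathcal{A})$. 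The implication (b) $\Rightarrow$ (c) is immediate. For (c) $\Rightarrow$ (d) I would reuse the mechanism of Lemma~\ref{L2.1a}: since $\mathcal{A}_{red}$ is dissipative, for $s\in\mathbb{C}^+$ the operator $\mathcal{A}_{red}-sI$ is again dissipative and satisfies $\|(sI-\mathcal{A}_{red})x\|\ge \mathrm{Re}(s)\|x\|$, so the surjectivity in (c) makes it boundedly invertible; Lemmas~\ref{L:A6} and~\ref{L:A4} then upgrade this to maximal dissipativity of $\mathcal{A}_{red}$.

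The substantial step is (d) $\Rightarrow$ (a). By Corollary~\ref{C:8} it suffices to invert $s\mathcal{E}-\mathcal{A}\mathcal{Q}$ for one $s\in\mathbb{C}^+$, and I would take $s$ in $\rho(\mathcal{A}_{red})$, which is all of $\mathbb{C}^+$ since $\mathcal{A}_{red}$ is maximally dissipative. Injectivity follows from Lemma~\ref{L:2.2-new}.a) together with the standing hypothesis on $\mathcal{A}_2$. For surjectivity I would split an arbitrary right-hand side $\left[\begin{smallmatrix} y_1 \\ y_2\end{smallmatrix}\right]$: first, using the surjectivity of $\mathcal{A}_2$, pick $\left[\begin{smallmatrix}\hat{x}_1\\\hat{x}_2\end{smallmatrix}\right]\in D(\mathcal{A}\mathcal{Q})$ with $\mathcal{A}_2\left[\begin{smallmatrix}\mathcal{Q}_1\hat{x}_1\\\mathcal{Q}_2\hat{x}_2\end{smallmatrix}\right]=-y_2$ and subtract it off, reducing to the case $y_2=0$; then the second block row forces $x_1\in D(\mathcal{A}_{red})$ with its associated $x_2$, while the first block row becomes $\mathcal{E}_1(sI-\mathcal{A}_{red})x_1=z_1$, which is solvable because both $sI-\mathcal{A}_{red}$ and $\mathcal{E}_1$ are boundedly invertible. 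Finally, bounded invertibility follows from the closed graph theorem: $\mathcal{A}$ is closed by (d), $\mathcal{Q}$ is boundedly invertible so $\mathcal{A}\mathcal{Q}$ is closed, hence $s\mathcal{E}-\mathcal{A}\mathcal{Q}$ is a closed bijection and its inverse is automatically bounded.

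The main obstacle I anticipate is the surjectivity argument in (d) $\Rightarrow$ (a): one must use the surjectivity of $\mathcal{A}_2$ to peel off the $y_2$-component while staying inside $D(\mathcal{A}\mathcal{Q})$, and then verify that the resolvent of $\mathcal{A}_{red}$ genuinely reconstructs a solution of the full two-block system. Keeping the bookkeeping of $\mathcal{Q}$ and $\mathcal{E}_1$ correct, so that the reduced equation is exactly $(sI-\mathcal{A}_{red})x_1=\mathcal{E}_1^{-1}z_1$, is the delicate point; every remaining step is either bounded bookkeeping or a citation of the appendix lemmas.
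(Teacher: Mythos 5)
Your proof is correct and follows essentially the same route as the paper's: the same preliminary observation that $\mathcal{A}_{red}$ is well defined and dissipative without any regularity assumption, the same use of Lemma~\ref{L2.1a} and Theorem~\ref{T:2HZ} for a)$\Rightarrow$b), the same peeling-off argument via surjectivity of ${\mathcal A}_2$ for the return implication, and the same appeal to closedness (closed graph theorem) and the appendix lemmas for bounded invertibility and maximal dissipativity. The only differences are organizational: you run the cycle a)$\Rightarrow$b)$\Rightarrow$c)$\Rightarrow$d)$\Rightarrow$a) instead of the paper's a)$\Rightarrow$b), b)$\Leftrightarrow$d), b)$\Rightarrow$c), c)$\Rightarrow$a), and you keep the general operators $\mathcal{E},\mathcal{Q}$ throughout (which makes the identity $\mathcal{E}_1(sI-\mathcal{A}_{red})x_1=z_1$ explicit) rather than first normalizing to the pair $(\mathcal{E}_I,\mathcal{A})$ via Corollary~\ref{C:8}.
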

\proof
By Corollary \ref{C:8} we only have to prove the equivalences  for the pair $(\mathcal E_I, \mathcal A)$. From the assumptions it follows that $\mathcal A_{red}$ is well-defined and dissipative, see the proof of Theorem \ref{T:2HZ}.
\smallskip

\noindent $a) \Rightarrow b)$: By Lemma \ref{L2.1a} ${\mathcal A}$ is maximally dissipative and so it is closed. The last part follows from Theorem \ref{T:2HZ}, since if $\mathcal A_{red}$ generates a contraction semigroup, then $sI- \mathcal A_{red}$ is boundedly invertible for all $s \in {\mathbb C}^+$, see also Section \ref{sec:app}.
It remains to show that ${\mathcal A}_2$ is surjective. Since $(\mathcal E_I,\mathcal A)$ is regular, $s\mathcal E_I - \mathcal A$ is surjective. This %Looking at the second line
immediately implies that ${\mathcal A}_2$ is surjective.
\smallskip

\noindent $b) \Leftrightarrow d)$: This equivalence follows from the fact that a dissipative operator $\mathcal A_{red}$ is maximally dissipative if and only if $sI-\mathcal A_{red}$ is surjective for some/all $s \in {\mathbb C}^+$, see Lemma \ref{L:A4} in the appendix.
\smallskip

\noindent $b) \Rightarrow c)$: This holds trivially, since when $sI-\mathcal A_{red}$ is boundedly invertible, its range equals ${\mathbb X}_1$ and the operator is closed.
\smallskip

\noindent $c) \Rightarrow a)$:
We begin by showing that $s \mathcal E_I - \mathcal A$ is injective.
Let $\left[\begin{smallmatrix} x_1 \\ x_2 \end{smallmatrix}\right] \in D(\mathcal A)$ be such that  $(s \mathcal E_I - \mathcal A)\left[ \begin{smallmatrix} x_1 \\ x_2 \end{smallmatrix} \right]=0$. So
\[
  0= \left\langle \begin{bmatrix} x_1 \\x_2 \end{bmatrix} , (s \mathcal E_I - \mathcal A) \begin{bmatrix} x_1 \\ x_2 \end{bmatrix}\right\rangle = \overline{s} \|x_1\|^2 - \left\langle \begin{bmatrix} x_1 \\x_2 \end{bmatrix} , \mathcal A \begin{bmatrix} x_1 \\ x_2 \end{bmatrix}\right\rangle
\]
where the last term has nonnegative real part, since $\mathcal A$ is dissipative. If $x_1\neq 0$, the first part would have strictly positive real part, which contradicts the equality and thus $x_1 =0$. This gives that
\[
  0=(s \mathcal E_I - \mathcal A) \begin{bmatrix} x_1 \\ x_2 \end{bmatrix} = (s\mathcal  E_I - \mathcal A) \begin{bmatrix} 0 \\ x_2 \end{bmatrix}= - \mathcal A\begin{bmatrix} 0 \\ x_2 \end{bmatrix},
\]
and in particular ${\mathcal A}_2\left[ \begin{smallmatrix} 0 \\ x_2 \end{smallmatrix} \right]=0$, which by assumption gives $x_2=0$. Thus we have shown that $s \mathcal E_I - \mathcal A$ is injective.

% then the second row of this equation gives that $x_1 \in D(A)$ and the first row gives that $(sI-A)x_1 =0$. Since $A$ is maximally dissipative and $s \in {\mathbb C}^+$, we know that $(sI-A)$ is injective. Thus $x_1=0$, and our assumption implies that $x_2=0$, which proves the injectivity of $s E_I - {\mathcal A}$.

Next we prove the surjectivity.
Let $ \left[\begin{smallmatrix} y_1 \\ y_2 \end{smallmatrix}\right] \in {\mathbb X}$ be given. By the surjectivity of ${\mathcal A}_2$ there exists an $\left[\begin{smallmatrix} \tilde{x}_1 \\ \tilde{x}_2 \end{smallmatrix}\right] \in D(\mathcal A)$ such that ${\mathcal A}_2 \left[\begin{smallmatrix} \tilde{x}_1 \\ \tilde{x}_2 \end{smallmatrix}\right] = -y_2$. Defining
\[
  \tilde{y}_1 = s \tilde{x}_1 - {\mathcal A}_1\begin{bmatrix} \tilde{x}_1 \\ \tilde{x}_2 \end{bmatrix},
\]
we obtain
\begin{equation}
\label{eq:16}
   \begin{bmatrix} \tilde{y}_1 \\ y_2 \end{bmatrix} = (s\mathcal E_I - \mathcal A) \begin{bmatrix} \tilde{x}_1 \\ \tilde{x}_2 \end{bmatrix}.
\end{equation}
Since $(sI-\mathcal A_{red})$ is surjective, there exists $x_1$ such that $y_1 - \tilde{y}_1 = (sI-\mathcal A_{red}) x_1$. By the definition of $\mathcal A_{red}$ this means that there exists  $x_2$ such
\begin{equation}
\label{eq:17}
   \begin{bmatrix} y_1 - \tilde{y}_1 \\ 0 \end{bmatrix} = (s\mathcal E_I - \mathcal A) \begin{bmatrix} x_1 \\ x_2 \end{bmatrix}.
\end{equation}
Adding (\ref{eq:16}) and (\ref{eq:17}) gives that $\left[\begin{smallmatrix} x_1 +  \tilde{x}_1 \\ x_2 + \tilde{x}_2\end{smallmatrix}\right]$ is mapped to $\left[ \begin{smallmatrix} y_1 \\ y_2 \end{smallmatrix} \right]$ by $s\mathcal E_I - \mathcal A$, and we conclude that  $s\mathcal E_I - \mathcal A$ is surjective.

Since $\mathcal A - s\mathcal E_I $ is injective and surjective and since $\mathcal A$ is closed, so is $\mathcal A - s\mathcal E_I $ and thus injectivity and surjectivity implies bounded invertibility, see e.g.\ [Corollary A.3.50]\cite{CuZw95}.
\hfill\eproof
\medskip

In the part of the  proof of Lemma~\ref{L:11} that c) implies d) we show that we only needed the closedness of $\mathcal A$ to conclude from the injectivity plus surjectivity the bounded invertibility of $s\mathcal E_I-\mathcal A_{red}$. Since $\mathcal A - s\mathcal E_I$ is dissipative, it is closable, see Section \ref{sec:app}. The closure is obviously still surjective, and thus it remains to show that it is injective. The $s\mathcal E_I$ term gives that any element in the kernel should have $x_1=0$. If the following implication holds: ${\mathcal A}_2 \left[\begin{smallmatrix} 0 \\ x_{2,n} \end{smallmatrix}\right] \rightarrow 0$, $x_{2,n} \rightarrow x_2 \Rightarrow x_2 =0$, then the closure is injective.

Theorem \ref{T:2HZ} implies that for all $x_{1,0}\in {\mathbb X}_1$ there exists a unique (weak or mild) solution of
\begin{equation}
\label{eq:5aHZ}
  \dot{x}_1(t) = \mathcal A_{red}x_1(t), \quad x_1(0)=x_{1,0}.
\end{equation}
However, this is only a part of the solution of the adHDAE~\eqref{eq:adae}.
For a classical solution, we have that $x_1(t) \in D(\mathcal A_{red})$, and so since a given $x_1$ yields a unique $x_2$, we also find a (unique) $x_2(t)$ such that the bottom equation of \eqref{eq:adae} is satisfied. In general the equation for $x_2(t)$ will not exist for all mild solutions, as is shown on basis of Example \ref{E3:HZ}, see the text following that example.
%By assumption on the domain of $A$, we see that for a classical solution of   (\ref{eq:5aHZ}), ${\mathcal A_2}\left[\begin{smallmatrix} x_1(t) \\ 0 \\ x_3(t) \end{smallmatrix} \right]$ is well defined, and so
%\[
%  \dot{x}_2(t) = {\mathcal A_2}\begin{bmatrix} x_1(t) \\ 0 \\ x_3(t) \end{bmatrix}, \quad x_2(0)=x_{20}
%\]
%is well-defined, and has a unique solution.

\section{Applications}
\label{sec:4}

In this section we study several well-known classes of systems, and show that they can be seen as examples of Theorem \ref{T:2HZ}. We start with the class of abstract port-Hamiltonian systems.

\subsection{Abstract port-Hamiltonian  systems on a 1D spatial domain.}

In this section we discuss  port-Hamiltonian systems and we begin with a very general setup.
Let $L^2,H^1,H^2$ denote the usual Hilbert spaces of square integrable functions, and associated Sobolev spaces.
On  $L^2((0,1); {\mathbb R}^n)$ we consider the operator
\begin{equation}
\label{eq:19}
  {\mathcal A}x = P_1 \frac{d}{d\zeta} x + G_0(\zeta)  x
\end{equation}
with domain
\begin{equation}
\label{eq:20}
  D({\mathcal A}) =\{ x \in H^1((0,1);{\mathbb R}^n) \mid W_B \begin{bmatrix} x(1)\\ x(0) \end{bmatrix} =0\}.
\end{equation}
Here $P_1$ is a real constant, symmetric, invertible matrix, and $G_0: [0,1] \mapsto {\mathbb C}^{n\times n}$ is Lipschitz continuous satisfying $G_0(\zeta) + G_0(\zeta)^* \leq 0$ for all $\zeta \in [0,1]$. Furthermore,  $W_B$ is a (constant) $n \times 2n$ matrix of full rank. From \cite{JacZ12, GoZM05} or \cite{JaMZ15} it is known that ${\mathcal A}$ is maximally dissipative if and only if
\begin{equation}
\label{eq:21}
  v^T P_1v - w^T P_1 w \leq 0 \quad \mbox{for all } v,w \in {\mathbb R}^n\ \mbox{satisfying }\ W_B \begin{bmatrix} v \\ w \end{bmatrix} =0.
\end{equation}

For this class of systems we show that if the conditions of Lemma~\ref{L:11} hold, then the associated operator pair is regular.
\begin{theorem}
\label{T:12} Consider the adHDAE system ~\eqref{eq:adae}, where the operator ${\mathcal A}$ has its domain defined in equations (\ref{eq:19}) and (\ref{eq:20}). Furthermore, assume that (\ref{eq:21}) holds. Let $n_1+n_2 = n$ and write ${\mathbb X}= L^2((0,1); {\mathbb R}^n)= L^2((0,1); {\mathbb R}^{n_1}) \oplus L^2((0,1); {\mathbb R}^{n_2}) =: {\mathbb X}_1 \oplus {\mathbb X}_2$.

If the subset ${\mathbb V}_0 := \{x_2 \in { \mathbb X}_2 \mid \left[\begin{smallmatrix} 0 \\ x_2 \end{smallmatrix}\right] \in D({\mathcal A})\ \mbox {and }  {\mathcal A}_2  \left[\begin{smallmatrix} 0 \\ x_2 \end{smallmatrix}\right] =0\} $ contains only the zero element, then $(\mathcal E_I, {\mathcal A})$ is regular, i.e. Assumption \ref{A1:HZ} is satisfied for this class of systems.

If the $n_2 \times n_2$ right lower block of $P_1$ is zero and the corresponding block of $G_0(\zeta)$ is invertible for almost all $\zeta \in [0,1]$, then ${\mathbb V}_0 = \{0\}$.
\end{theorem}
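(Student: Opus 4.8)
The statement contains two implications, which I would prove separately. For the first --- that $\mathcal V_0=\{0\}$ forces regularity of $(\mathcal E_I,\mathcal A)$ --- the plan is to invoke the characterisation in Theorem~\ref{t_commonQI}. Here $\mathcal E=\mathcal E_I$ and $\mathcal Q=I$, so conditions i)--iii) of Assumption~\ref{A1:HZ} hold (with $\mathcal E_1=\mathcal Q_1=\mathcal Q_2=I$, so that $\mathcal E_1^*\mathcal Q_1=I$ is coercive), and by \eqref{eq:21} together with the cited results of \cite{JacZ12,GoZM05,JaMZ15} the operator $\mathcal A$ is maximally dissipative. It therefore remains only to verify the coercivity estimate \eqref{eq:4-2-bis} of item v) with $\mathcal Q=I$, i.e.\ to exhibit $m_2>0$ with $\|x_1\|_{\mathcal X_1}^2+\|\mathcal Ax\|_{\mathcal X}^2\ge m_2^2\|x\|_{\mathcal X}^2$ for every $x=\left[\begin{smallmatrix}x_1\\x_2\end{smallmatrix}\right]\in D(\mathcal A)$; by Theorem~\ref{t_commonQI} this, with maximal dissipativity, is equivalent to regularity of $(\mathcal E_I,\mathcal A)$ and hence to Assumption~\ref{A1:HZ}.

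I would establish this estimate by contradiction and compactness. Suppose it fails; then there is a sequence $x^{(n)}\in D(\mathcal A)$ with $\|x^{(n)}\|_{\mathcal X}=1$, $x_1^{(n)}\to0$ in $\mathcal X_1$, and $\mathcal Ax^{(n)}\to0$ in $\mathcal X$. Since $P_1$ is invertible and $G_0$ is bounded, the identity $(x^{(n)})'=P_1^{-1}\bigl(\mathcal Ax^{(n)}-G_0x^{(n)}\bigr)$ shows that $\{(x^{(n)})'\}$ is bounded in $L^2$, so $\{x^{(n)}\}$ is bounded in $H^1((0,1);\mathbb R^n)$. Using the compact embeddings $H^1(0,1)\hookrightarrow L^2(0,1)$ and $H^1(0,1)\hookrightarrow C[0,1]$, I would extract a subsequence converging to some $x^\ast$ strongly in $L^2$, uniformly on $[0,1]$, and weakly in $H^1$. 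Passing to the limit in $P_1(x^{(n)})'+G_0x^{(n)}=\mathcal Ax^{(n)}\to0$ (the first term weakly, the second strongly) gives $\mathcal Ax^\ast=0$; the uniform convergence of the boundary traces preserves $W_B\left[\begin{smallmatrix}x^\ast(1)\\x^\ast(0)\end{smallmatrix}\right]=0$, so $x^\ast\in D(\mathcal A)$; and $x_1^{(n)}\to0$ gives $x_1^\ast=0$. Hence $x^\ast=\left[\begin{smallmatrix}0\\x_2^\ast\end{smallmatrix}\right]$ with $\mathcal A_2\left[\begin{smallmatrix}0\\x_2^\ast\end{smallmatrix}\right]=0$, i.e.\ $x_2^\ast\in\mathcal V_0=\{0\}$, so $x^\ast=0$; but strong $L^2$ convergence forces $\|x^\ast\|_{\mathcal X}=\lim_n\|x^{(n)}\|_{\mathcal X}=1$, a contradiction. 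The main obstacle is precisely this limiting argument: securing the a priori $H^1$ bound, checking that the boundary condition survives the limit, and recognising that it is exactly the compactness step which upgrades the bare kernel condition $\mathcal V_0=\{0\}$ to the quantitative lower bound \eqref{eq:4-2-bis}.

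For the second implication the computation is direct and needs no estimates. Writing $P_1=\left[\begin{smallmatrix}P_{11}&P_{12}\\P_{12}^T&P_{22}\end{smallmatrix}\right]$ and $G_0(\zeta)=\left[\begin{smallmatrix}G_{11}&G_{12}\\G_{21}&G_{22}\end{smallmatrix}\right](\zeta)$ in the $n_1+n_2$ splitting, take any $\left[\begin{smallmatrix}0\\x_2\end{smallmatrix}\right]\in D(\mathcal A)$, so that $x_2\in H^1((0,1);\mathbb R^{n_2})$ and the boundary condition holds. Reading off the second block component of $\mathcal A\left[\begin{smallmatrix}0\\x_2\end{smallmatrix}\right]=P_1\frac{d}{d\zeta}\left[\begin{smallmatrix}0\\x_2\end{smallmatrix}\right]+G_0\left[\begin{smallmatrix}0\\x_2\end{smallmatrix}\right]$ yields $\mathcal A_2\left[\begin{smallmatrix}0\\x_2\end{smallmatrix}\right]=P_{22}x_2'+G_{22}x_2$, which under the hypothesis $P_{22}=0$ reduces to the purely algebraic expression $G_{22}(\zeta)x_2(\zeta)$. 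Imposing $\mathcal A_2\left[\begin{smallmatrix}0\\x_2\end{smallmatrix}\right]=0$ then means $G_{22}(\zeta)x_2(\zeta)=0$ for almost all $\zeta\in[0,1]$, and since $G_{22}(\zeta)$ is invertible for almost all $\zeta$ this forces $x_2(\zeta)=0$ a.e., hence $x_2=0$. Thus $\mathcal V_0=\{0\}$, which combined with the first implication gives regularity of $(\mathcal E_I,\mathcal A)$ in this concrete situation.
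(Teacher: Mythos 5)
Your proposal is correct, but for the first implication it takes a genuinely different route from the paper. The paper's proof is constructive and exploits the one-dimensional structure: since $P_1$ is invertible and $G_0$ is Lipschitz, the resolvent equation $(s\mathcal E_I-\mathcal A)x=y$ is an ODE in $\zeta$ solved by variation of constants with the fundamental matrix $\Psi$, and bounded invertibility reduces to invertibility of the finite-dimensional shooting matrix $W_{B,1}\Psi(1,0)+W_{B,2}$; if that matrix were singular, a kernel vector would yield a nonzero element of $\ker(s\mathcal E_I-\mathcal A)$, which by Lemma~\ref{L:2.2-new}~a) must have the form $\left[\begin{smallmatrix}0\\x_2\end{smallmatrix}\right]$ with $\mathcal A\left[\begin{smallmatrix}0\\x_2\end{smallmatrix}\right]=0$, contradicting $\mathcal V_0=\{0\}$ (here finite-dimensionality, i.e.\ injective implies invertible for a square matrix, converts the kernel condition into surjectivity). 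You instead go through the abstract characterization of Theorem~\ref{t_commonQI}, item v): maximal dissipativity is taken from the cited literature under \eqref{eq:21}, and the coercivity estimate \eqref{eq:4-2-bis} is verified by contradiction using the a priori bound $x'=P_1^{-1}(\mathcal Ax-G_0x)$ and the compact embeddings $H^1(0,1)\hookrightarrow L^2(0,1)$ and $H^1(0,1)\hookrightarrow C[0,1]$; your limiting argument is sound (the weak/strong splitting, the persistence of the boundary condition under uniform convergence, and the identification $x_2^\ast\in\mathcal V_0$ are all justified), so compactness plays for you exactly the role the finite-dimensional shooting matrix plays in the paper. What each approach buys: the paper's argument exhibits the resolvent explicitly and needs only ODE well-posedness, whereas yours is softer and would carry over to situations where no explicit fundamental solution is available but compactness of the embedding survives; on the other hand, yours needs maximal dissipativity as an external input, which the paper's does not use at this point. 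For the second implication your computation, namely that $\mathcal A_2\left[\begin{smallmatrix}0\\x_2\end{smallmatrix}\right]=P_{22}x_2'+G_{22}x_2=G_{22}x_2$ once $P_{22}=0$, is the same as the paper's~\eqref{eq:25}, and is in fact stated slightly more cleanly, since you correctly observe that only the second block component of $\mathcal A\left[\begin{smallmatrix}0\\x_2\end{smallmatrix}\right]$ needs to vanish.
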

\proof
We have to show that $s\mathcal E_I - {\mathcal A}$ is boundedly invertible. To do so we introduce some notation. We split the matrices according to the dimensions $n_1$ and $n_2$, i.e.
\begin{equation}
\label{eq:24}
  P_1 = \begin{bmatrix} P_{1,11} & P_{1,12} \\ P_{1,21} & P_{1,22} \end{bmatrix} \quad G_0 = \begin{bmatrix} G_{0,11} & G_{0,12} \\ G_{0,21} & G_{0,22} \end{bmatrix}.
\end{equation}
The equation $y = (\mathcal E_I - {\mathcal A})x$ can then equivalently be written as
\[
  P_1 \frac{dx}{d\zeta}(\zeta) = \begin{bmatrix} sI - G_{0,11}(\zeta)  & -G_{0,12}(\zeta) \\ -G_{0,21}(\zeta) & -G_{0,22}(\zeta) \end{bmatrix} \begin{bmatrix} x_1(\zeta) \\ x_2(\zeta) \end{bmatrix} - \begin{bmatrix} y_1(\zeta) \\ y_2(\zeta) \end{bmatrix} =:- G_s(\zeta) x(\zeta) - y(\zeta).
\]
Since $P_1$ is invertible, this is an implicit linear ordinary differential equation in $\zeta$ with variable coefficients. Since $G_s$ is Lipschitz continuous, for every initial condition $x(0)$ this equation has a unique solution, which we write as
\[
  x(\zeta)= \begin{bmatrix} x_1(\zeta) \\ x_2(\zeta) \end{bmatrix} = \Psi(\zeta,0)  \begin{bmatrix} x_1(0) \\ x_2(0) \end{bmatrix} + \int_0^{\zeta} \Psi(\zeta,\tau)  \begin{bmatrix} -y_1(\tau) \\ -y_2 (\tau) \end{bmatrix} d\tau,
\]
where $\Psi$ is the fundamental solution matrix of the homogeneous system. If we would have that this solution is in the domain of ${\mathcal A}$, then this part of the proof is complete. For this we need that
\[
  W_B \begin{bmatrix} x(1) \\ x(0)\end{bmatrix}=0,
\]
or equivalently
\[
  (W_{B,1} \Psi(1,0) + W_{B,2} ) x(0) = W_{B,1} \int_0^{1} \Psi(1,\tau)  \begin{bmatrix} y_1(\tau) \\ y_2 (\tau) \end{bmatrix} d\tau.
\]
If the (constant) matrix in front of $x(0)$ is invertible, then we can find a unique $x(0)$, and so the solution $x(\cdot)$ is uniquely determined. If this matrix is not invertible, then choose $0\neq x(0)$ in its kernel, i.e.,
\[
  (W_{B,1} \Psi(1,0) + W_{B,2} ) x(0) = 0.
\]
This implies that
\[
  x(\zeta): =  \Psi(\zeta,0) x(0)
\]
is a solution of  $P_1 \frac{dx}{d\zeta}(\zeta)  =- G_s(\zeta) x(\zeta)$ which satisfies the boundary condition, i.e.\ $W_B \left[\begin{smallmatrix} x(1) \\ x(0)\end{smallmatrix}\right]=0$. By the definition of $G_s$, this means that $x$ satisfies $(s\mathcal E_I - {\mathcal A})x =0$, implying that $s\mathcal E_I - {\mathcal A}$ is not injective. By Lemma \ref{L:2.2-new} this means that the first component of $x$ is zero, and the second component satisfies ${\mathcal A}  \left[\begin{smallmatrix} 0 \\ x_2 \end{smallmatrix}\right]=0$. By our assumption this gives that $x_2=0$.  Concluding, we see that $(W_{B,1} \Psi(1,0) + W_{B,2})$ must be injective and thus surjective, implying that the pair $(\mathcal E_I, {\mathcal A})$ is regular.

To prove the last statement, considering (\ref{eq:24}), we get
\begin{equation}
\label{eq:25}
  {\mathcal A_2}\begin{bmatrix} 0 \\ x_2 \end{bmatrix} = \begin{bmatrix} P_{1,12} \frac{dx_2}{d\zeta} + G_{0,12} x_2  \\ G_{0,22}x_2 \end{bmatrix},
\end{equation}
where we have used the condition on $P_1$.
Using the invertibility of $G_{0,22}$ this can only be zero, when $x_2=0$.
\hfill$\Box$
\medskip

We see from (\ref{eq:25}) that even when $G_{0,22}$ is singular, this equation could have only the zero function as its solution. Since $P_1$ is invertible and $P_{1,22}=0$, $P_{1,12}$ is of full rank and so ${\mathcal A}  \left[\begin{smallmatrix} 0 \\ x_2 \end{smallmatrix}\right]=0$ is (partly) a differential equation. This means that it will also depend on the boundary conditions, imposed by $W_B$, whether $x_2=0$ is its only solution.

%Furthermore, ${\mathcal H}$ is a coercive operator on $L^2((0,1); {\mathbb R}^n)$.
There are several applications of Theorem~\ref{T:12}.
\begin{example}
\label{E3:HZ}{\rm
Choose
\[
  P_1= \begin{bmatrix} 0 &1 \\ 1 & 0 \end{bmatrix}
  \mbox{ and } G_0= \begin{bmatrix} -g_0%(\zeta)
  & 0\\0  & -r
  %(\zeta)
  \end{bmatrix},
\]
where $g_0$ is a bounded function and $r$ is a bounded and invertible function, and moreover both satisfy that their real part is non-negative. Furthermore, choose
\[
  \mathcal E = \begin{bmatrix} e_1%(\zeta)
  &0 \\ 0 & 0 \end{bmatrix}  \mbox{ and } \mathcal Q = \begin{bmatrix} 1 & 0\\0  & q_2%(\zeta)
  \end{bmatrix},
\]
where $e_1, q_2$ are positive, bounded, and invertible functions.
We take a full rank $W_B$ such that (\ref{eq:21}) holds, and thus ${\mathcal A}$ is dissipative.
For $n_1=n_2 =1$, it is not hard to see that the assumptions of Theorem \ref{T:12} are satisfied. Hence $(\mathcal E_I, {\mathcal A})$ is regular, and so is $(\mathcal E, {\mathcal A \mathcal Q})$, see Corollary \ref{C:8}.

Applying Theorem \ref{T:2HZ}, by \eqref{eq:4:HZ} we find that
\[
  \mathcal A_{red}x_1 = \frac{1}{e_1} \left[ \frac{d (q_2x_2)}{d\zeta} - g_0 x_1 \right]  \mbox{ with } \frac{dx_1}{d\zeta} - r q_2 x_2 =0
\]
or equivalently
\begin{equation}
\label{eq:26}
  \left(\mathcal A_{red}x_1\right)(\zeta)  = \frac{1}{e_1(\zeta)} \left[ \frac{d}{d\zeta} \left( \frac{1}{r(\zeta)}  \frac{dx_1}{d\zeta} (\zeta) \right) - g_0(\zeta) x_1(\zeta) \right]
\end{equation}
with domain
\[
  D(\mathcal A_{red}) =\{ x_1 \in H^1(0,1) \mid  \frac{1}{r}  \frac{dx_1}{d\zeta} \in H^1(0,1) \mbox{ and }
  W_B  \begin{bmatrix} x_1(1) \\ \frac{1}{r(1)} \frac{d x_1}{d \zeta}(1)\\  x_1(0) \\ \frac{1}{r(0)} \frac{dx_1}{d \zeta}(0) \end{bmatrix} =0\}.
\]
If we assume that $r$ is real-valued, then this operator is (minus) a Sturm-Liouville operator, see \cite[p.\ 82]{CuZw95}, with the exception of the sign condition on the last term. This sign condition is a consequence of the fact that we want dissipative operators, whereas that is not imposed in general for Sturm-Liouville operators.

Sturm-Liouville operators always come with a specific set of boundary conditions. We can  obtain these boundary conditions by choosing the right $W_B$, e.g.\ with the real matrix
\[
  W_B = \begin{bmatrix} \alpha_1 & \beta_1 & 0 &0 \\ 0 & 0 & \alpha_2 & \beta_2 \end{bmatrix},
\]
with $\alpha_1^2+ \beta_1^2 > 0$ and $\alpha_2^2+ \beta_2^2 > 0$. This matrix satisfies (\ref{eq:21}) if and only if $\alpha_1 \beta_1 \geq 0$ and  $\alpha_2 \beta_2 \geq 0$. Again these conditions are a consequence of the fact that we want dissipative operators, whereas that is not imposed for Sturm-Liouville operators.

The diffusion/heat equation is the most well-known Sturm-Liouville operator. If we choose $r=e_1 =1$, $\alpha_1=\alpha_2=1$ and $\beta_1=\beta_2=0$, then the PDE $\dot{x}(t) = {\mathcal A} x(t)$  corresponds to an undamped vibrating string which is fixed at the boundary, whereas the PDE $\dot{x}_1(t) = \mathcal A_{red} x_1(t)$ corresponds to the diffusion/heat equation with temperature zero at the boundary.

So we have constructed the heat equation out of the wave equation.
If we choose $r=-i$, then the PDE $\dot{x}_1(t) = \mathcal A_{red} x_1(t)$ corresponds to the 1-D Schr\"{o}dinger equation.
}
\end{example}

Now we return to the comments made below equation (\ref{eq:5aHZ}). We once more look at the differential equation we found for $x_1$ in Example \ref{E3:HZ}. For simplicity, we assume that $e_1=1$, $r=-i$, and so $x_1$ satisfies the standard Schr\"{o}dinger equation. It is well-known that for an arbitrary initial condition in $L^2(0,1)$ this will have a unique weak/mild solution. However, for an initial condition in $L^2(0,1)$ the solution will not get smoother, and so $x_2(t) = i \frac{\partial}{\partial \zeta}x_1(t)$ will in general not lie in the state space.

Next we apply Theorem \ref{T:12} to  show that the equations for an Euler-Bernoulli beam can be constructed out of two wave equations.
\begin{example}
\label{E:14} {\rm
Consider ${\mathcal A}$ of equation (\ref{eq:19}) with
\[
  P_1= \begin{bmatrix} 0&0& 0 &1 \\ 0& 0 & 1& 0\\ 0&1& 0  & 0 \\ 1& 0& 0& 0\end{bmatrix}, \quad G_0= \begin{bmatrix} 0&0& 0 &0 \\ 0& 0 & 0& 0\\ 0&0& 0  & 1 \\ 0& 0& -1 & 0 \end{bmatrix}
\]
and assume that $W_B$ is a full rank $4\times 8$ matrix satisfying (\ref{eq:21}).
%and
%\[
%  W_B = \begin{bmatrix} 0&0& 1 &0 & 0 & 0 & 0 &0 \\ 0&0& 0 &1 & 0 & 0 & 0 &0 \\ 0&0& 0 &0&  1 & 0 & 0 & 0 \\ 0&0& 0 &0&  0 & 1 & 0 & 0  \end{bmatrix}.
%\]
We take $n_1=n_2=2$, $\mathcal E=\mathrm{diag}(\mathcal E_1, 0)$, $\mathcal Q= \mathrm{diag}(\mathcal Q_1, \mathcal Q_2)$, with $\mathcal E_1, \mathcal Q_1,\mathcal Q_2$ strictly positive $(2\times 2)$-matrix valued bounded functions. It is easy to see that the conditions of Theorem \ref{T:12} are satisfied, and so are those of Assumption~\ref{A1:HZ}.

The operator $\mathcal A_{red}$ from Theorem \ref{T:2HZ} then becomes
\begin{equation}
\label{eq:27}
  \mathcal A_{red}x_1 =  \mathcal E_1^{-1} \begin{bmatrix}  0 &1 \\ 1 &0 \end{bmatrix} \frac{d}{d\zeta}\left(  \begin{bmatrix}  0 &1 \\ -1 &0 \end{bmatrix}  \begin{bmatrix}  0 &1 \\ 1 &0 \end{bmatrix} \frac{d ({\mathcal Q}_1 x_1)}{d\zeta} \right) = \mathcal E_1^{-1}  \begin{bmatrix}  0 &-1 \\ 1 &0 \end{bmatrix}  \frac{d^2({\mathcal Q}_1 x_1)}{d\zeta^2}.
\end{equation}
For $\mathcal E_1=\left[ \begin{smallmatrix} \rho & 0 \\ 0 & 1 \end{smallmatrix} \right]$, $\mathcal Q_1=\left[ \begin{smallmatrix} q_1 & 0 \\ 0 & q_2 \end{smallmatrix} \right]$, and  $x_1:=\left[\begin{smallmatrix} x_{1,1} \\ x_{1,2}\end{smallmatrix}\right]$ the associated PDE $\dot{x}_1(t) = \mathcal A_{red} x_1(t)$ takes the form
\[
  \frac{\partial x_{1,1}}{\partial t} = - \frac{1}{\rho} \frac{\partial^2 (q_2 x_{1,2})}{\partial \zeta^2} \quad \mbox{ and }  \quad \frac{\partial x_{1,2}}{\partial t} = \frac{\partial^2 (q_1 x_{1,1})}{\partial \zeta^2},
\]
or in the variable $x_{1,1}$
\[
  \rho(\zeta) \frac{\partial^2 x_{1,1}}{\partial t^2}(\zeta,t) = -\frac{\partial^2}{\partial \zeta^2}\left[ q_2(\zeta) \frac{\partial^2 (q_1(\zeta) x_{1,1})}{\partial \zeta^2}(\zeta,t)\right].
\]
For $\rho$ the mass density, $q_1=1$, and $q_2 = E\,K$, with $E$ the elastic modulus and
$K$ the second moment of area of the beam's cross section, this is the well-known Euler-Bernoulli beam model.

We note that $P_1$ can be seen to correspond to two wave equations, namely one in the variables $x_{1,1}$ and $x_{1,4}$ and the other in the variables $x_{1,2}$ and $x_{1,3}$. So we can construct the beam equation out of two wave equations.}
\end{example}

In Example~\ref{E:14} we have discussed the construction of a second order port-Hamiltonian system from a first order one,
%have seen that out of a port-Hamiltonian system with a $P_1$ we can build a port-Hamiltonian system with a $P_2$,
see also \cite{GoZM05}. In the following lemma we will do this generally, and also pay attention to the boundary conditions.

\begin{lemma}\label{L:P1P2}
 Consider an operator $\mathcal A_{red}$ on $L^2((0,1);{\mathbb R}^{n_1})$ of the  form
  \[
      \mathcal A_{red} x_1 =P_2 \frac{d^2x_1}{d\zeta^2} +  P_{1,1}\frac{dx_1}{d\zeta} + P_0 x_1
  \]
  with domain
  \[
    D(\mathcal A_{red}) =\{ x_1 \in H^2((0,1); {\mathbb R}^{n_1}) \mid \tilde{W}_B \left[ \begin{smallmatrix} x_1(1) \\  \frac{dx_1}{d\zeta} (1) \\ x_1(0) \\ \frac{dx_1}{d\zeta} (0) \end{smallmatrix} \right] =0\},
  \]
where we assume that, for the $n_1\times n_1$ coefficient matrices, we have that $P_2$, $P_0$ are skew-symmetric, $P_{1,1}$ is symmetric and $P_2$ is invertible. Furthermore, $\tilde{W}_B$ is a full rank $2 n_1\times 4 n_1$-matrix.

If $\mathcal A_{red}$ is a generator of a contraction semigroup on $L^2((0,1);{\mathbb R}^{n_1})$, then it can be constructed via Theorem~\ref{T:2HZ}
from an ${\mathcal A}$
as in (\ref{eq:19}) and (\ref{eq:20}).
\end{lemma}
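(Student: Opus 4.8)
The plan is to reverse-engineer a first-order realization: to exhibit concrete data $P_1,G_0,W_B$ for an operator of the form (\ref{eq:19})--(\ref{eq:20}) on $\mathcal X=\mathcal X_1\oplus\mathcal X_2$ with $\mathcal X_2=L^2((0,1);\mathbb R^{n_1})$ (so that $n_2=n_1$ and $n=2n_1$), together with $\mathcal E_1=I$ and $\mathcal Q=I$, and then to check that the reduced operator $\mathcal A_{red}$ produced by Theorem~\ref{T:2HZ} coincides with the given second-order operator. I would take the transport part
\[
P_1=\begin{bmatrix} 0 & I \\ I & 0\end{bmatrix},
\]
which is real, symmetric and invertible, and look for a \emph{constant, skew-symmetric} $G_0$, so that $G_0+G_0^*=0\le0$ and the interior of $\mathcal A$ is lossless. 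This matches the fact that $P_2,P_0$ skew and $P_{1,1}$ symmetric make the bulk contribution to $2\,\mathrm{Re}\langle\mathcal A x_1,x_1\rangle$ vanish. Note that with $P_{1,22}=0$ the constraint $\mathcal A_2\left[\begin{smallmatrix} x_1 \\ x_2\end{smallmatrix}\right]=0$ reads $x_1'+G_{0,21}x_1+G_{0,22}x_2=0$.

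The second step determines the blocks of $G_0$. I would set $G_{0,22}=-P_2^{-1}$, which is skew-symmetric (the inverse of a skew matrix is skew) and invertible; the constraint then yields $x_2=P_2(x_1'+G_{0,21}x_1)$. Differentiating and inserting into $\mathcal A_1=x_2'+G_{0,11}x_1+G_{0,12}x_2$ produces a second-order operator in $x_1$. Writing $G_{0,12}=C$, $G_{0,21}=-C^{T}$ (to keep $G_0$ skew) and $G_{0,11}=P_0+CP_2C^{T}$, a direct computation gives
\[
\mathcal A_{red}x_1=P_2x_1''+(CP_2-P_2C^{T})x_1'+P_0x_1 .
\]
Since $CP_2-P_2C^{T}$ is automatically symmetric and $CP_2C^{T}$ automatically skew, the choice $C=\tfrac12 P_{1,1}P_2^{-1}$ gives $CP_2-P_2C^{T}=P_{1,1}$ (using $P_2^{-T}=-P_2^{-1}$ and $P_{1,1}^{T}=P_{1,1}$), so that $\mathcal A_{red}x_1=P_2x_1''+P_{1,1}x_1'+P_0x_1$ as required; all of Assumption~\ref{A1:HZ}.i)--iii) hold with $\mathcal E_1^*\mathcal Q_1=I$ coercive.

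For the boundary conditions I would use that on the domain $x_2=P_2x_1'-P_2C^{T}x_1$, so the boundary vector $[\,x_1(1)^T\ x_2(1)^T\ x_1(0)^T\ x_2(0)^T\,]^T$ is obtained from $[\,x_1(1)^T\ x_1'(1)^T\ x_1(0)^T\ x_1'(0)^T\,]^T$ by an invertible $4n_1\times4n_1$ matrix $M$ (invertibility from that of $P_2$). Setting $W_B=\tilde W_B M^{-1}$, which is again full rank $n\times2n$, makes $D(\mathcal A_{red})$ equal to $D(\mathcal A)$: for $x_1\in H^2$ the unique admissible $x_2$ lies in $H^1$, and conversely $x_2\in H^1$ forces $x_1\in H^2$, while the constraint $W_B=0$ becomes $\tilde W_B=0$. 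It remains to feed the two hypotheses into Theorem~\ref{T:12}. First, $\mathcal V_0=\{0\}$, which follows from the last statement of Theorem~\ref{T:12} since $P_{1,22}=0$ and $G_{0,22}=-P_2^{-1}$ is invertible. Second, the inequality (\ref{eq:21}) for $W_B$: here I would compute $2\,\mathrm{Re}\langle\mathcal A x,x\rangle = x(1)^{T}P_1x(1)-x(0)^{T}P_1x(0)$ for the first-order operator (the $G_0$-term drops out by skew-symmetry) and note that under $x_2=P_2x_1'-P_2C^{T}x_1$, with $\mathcal E_1^*\mathcal Q_1=I$, this equals $2\,\mathrm{Re}\langle\mathcal A x_1,x_1\rangle$ for the given operator.

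The main obstacle is precisely this last transport of the hypothesis: since $\mathcal A$ generates a contraction semigroup it is dissipative, so the boundary quadratic form is $\le0$ on its domain, and because the trace map $x_1\mapsto(x_1(1),x_1'(1),x_1(0),x_1'(0))$ from $H^2$ is surjective, every $(v,w)\in\ker W_B$ arises as boundary data of some admissible $x_1$; this yields exactly (\ref{eq:21}). The bulk identities are routine integrations by parts, but verifying the equivalence of the two boundary forms under $M$ and the surjectivity of the trace map requires care. With (\ref{eq:21}) and $\mathcal V_0=\{0\}$ established, Theorem~\ref{T:12} gives regularity of $(\mathcal E_I,\mathcal A)$, and Theorem~\ref{T:2HZ} then identifies $\mathcal A_{red}$ with the prescribed second-order operator, completing the construction.
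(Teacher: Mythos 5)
Your proposal is correct, and it reaches the conclusion by the same overall strategy as the paper (exhibit a first-order system of the form (\ref{eq:19})--(\ref{eq:20}), verify the hypotheses of Theorem~\ref{T:12}, then reduce via Theorem~\ref{T:2HZ}), but with a genuinely different realization and a different verification of the boundary condition. The paper absorbs the symmetric coefficient into the transport matrix, taking $P_1 = \left[\begin{smallmatrix} P_{1,1} & I \\ I & 0 \end{smallmatrix}\right]$ and a block-diagonal $G_0 = \mathrm{diag}(P_0,\,-P_2^{-1})$, so the closure relation is simply $x_2 = P_2 x_1'$ and the boundary transformation is $W_B = \tilde{W}_B\,\mathrm{diag}(I,P_2^{-1},I,P_2^{-1})$; dissipativity of the first-order operator is then obtained by a finite-dimensional congruence showing that (\ref{eq:21}) is equivalent to the characterization (\ref{eq:28}) of \cite{GoZM05} for the second-order generator. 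You instead keep $P_1 = \left[\begin{smallmatrix} 0 & I \\ I & 0 \end{smallmatrix}\right]$ in canonical wave form and push $P_{1,1}$ into skew off-diagonal blocks of $G_0$ via $C = \tfrac12 P_{1,1}P_2^{-1}$ (with the compensating skew correction $CP_2C^{T}$ in $G_{0,11}$), which makes the closure relation $x_2 = P_2(x_1' - C^{T}x_1)$ and requires the block-triangular boundary map $M$; your identities $CP_2 - P_2C^{T} = P_{1,1}$ and the matching of the two boundary quadratic forms are correct (I checked them, using $P_2^{-T} = -P_2^{-1}$ and $P_2C^{T} = -\tfrac12 P_{1,1}$). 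Your verification of (\ref{eq:21}) is also different and self-contained: instead of invoking the second-order characterization (\ref{eq:28}), you use only that generation implies dissipativity (Lumer--Phillips) together with surjectivity of the $H^2$ trace map $x_1 \mapsto (x_1(1), x_1'(1), x_1(0), x_1'(0))$ to transport the sign of the boundary form from $D(\mathcal A)$ to all of $\ker W_B$. What each approach buys: the paper's choice yields simpler data and a purely algebraic dissipativity check, at the price of relying on the known iff-result (\ref{eq:28}) for second-order operators; your choice keeps the extended system literally built from canonical wave operators and needs only the first-order condition (\ref{eq:21}) quoted at the start of the subsection, at the price of heavier bookkeeping in $G_0$, the closure relation, and the boundary transformation.
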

\proof
We recall from \cite{GoZM05} that under the conditions on the coefficient matrices, $\mathcal A_{red}$ generates a contraction semigroup if and only if
\begin{equation}
\label{eq:28}
  \begin{bmatrix} v_{1,1}^T &  v_{1,2}^T  \end{bmatrix} \begin{bmatrix} P_{1,1} & P_2 \\ - P_2 & 0 \end{bmatrix}  \begin{bmatrix} v_{1,1} \\  v_{1,2} \end{bmatrix} -
   \begin{bmatrix} w_{1,1}^T &  w_{1,2}^T  \end{bmatrix} \begin{bmatrix} P_{1,1} & P_2 \\ - P_2 & 0 \end{bmatrix}  \begin{bmatrix} w_{1,1} \\  w_{1,2}  \end{bmatrix}
  \leq 0
\end{equation}
for $v_{1,1},v_{1,2}, w_{1,1}, w_{1,2} \in {\mathbb R}^{n_1}$ such that $\tilde{W}_B \left[ \begin{smallmatrix} v_{1,1} \\ v_{1,2} \\ w_{1,1}\\ w_{1,2} \end{smallmatrix} \right] =0$.

With the matrices $P_2, P_{1,1}$, and $P_0$ we choose the following $n\times n= 2n_1 \times 2n_1$-matrices in (\ref{eq:19})
\begin{equation}
\label{eq:28a}
  P_1 = \begin{bmatrix} P_{1,1} & I \\ I & 0 \end{bmatrix} \mbox{ and } G_0 = \begin{bmatrix} P_{0} & 0 \\ 0 & -P_2^{-1} \end{bmatrix} .
\end{equation}
From our assumption and choices we see that $P_1^T=P_1$ and $G_0^T=-G_0$. Furthermore, $P_1$ is invertible.

Next we choose the matrix $W_B$ in (\ref{eq:20}) as
\begin{equation}
\label{eq:29}
 W_B = \tilde{W}_B \cdot \mathrm{diag}(I, P_{2}^{-1}, I , P_{2}^{-1}).
\end{equation}
It is clear that this has full rank $n=2n_1$. It remains to check  that for these choices the operator ${\mathcal A}$ with domain $D({\mathcal A})$, defined via (\ref{eq:19}) and (\ref{eq:20}), satisfy the condition (\ref{eq:21}).

First we note that $\left[\begin{smallmatrix} v_1 \\ v_2 \\ w_1 \\w_2 \end{smallmatrix}\right]\in \ker W_B$ if and only if $\left[\begin{smallmatrix} v_1 \\ P_2^{-1} v_2 \\ w_1 \\ P_2^{-1}w_2 \end{smallmatrix}\right]\in \ker \tilde{W}_B$.
Secondly, the following equality holds
\begin{align}
\nonumber
 \begin{bmatrix}v_1^T &  v_2^T  \end{bmatrix}  P_1 \begin{bmatrix} v_1 \\  v_2  \end{bmatrix} =& \begin{bmatrix}v_1^T &  v_2^T  \end{bmatrix} \begin{bmatrix} P_{1,1} & I \\ I & 0 \end{bmatrix}   \begin{bmatrix} v_1 \\  v_2  \end{bmatrix} \\
 \nonumber
 =&\ \begin{bmatrix}v_1^T &  v_2^T  \end{bmatrix} \begin{bmatrix} P_{1,1} & P_{2} \\ I & 0 \end{bmatrix}   \begin{bmatrix} v_1 \\  P_{2}^{-1} v_2  \end{bmatrix} \\
 \label{eq:30}
 =&\  \begin{bmatrix}v_1^T &  \left( P_{2}^{-1}v_2\right)^T  \end{bmatrix} \begin{bmatrix} P_{1,1} & P_{2} \\ -P_{2} & 0 \end{bmatrix}   \begin{bmatrix} v_1 \\  P_{2}^{-1} v_2  \end{bmatrix} .
\end{align}
Combining these two facts with (\ref{eq:28}) we have that
\[
  \begin{bmatrix}v_1^T & v_2^T  \end{bmatrix}  P_1 \begin{bmatrix} v_1 \\  v_2  \end{bmatrix}  -
  \begin{bmatrix}w_1^T &  w_2^T  \end{bmatrix}  P_1 \begin{bmatrix} w_1 \\  w_2  \end{bmatrix} \leq 0 \mbox{ for all } \begin{bmatrix} v_1 \\ v_2 \\ w_1 \\w_2 \end{bmatrix} \in \ker W_B.
\]
So we have that the operator ${\mathcal A}$ defined in (\ref{eq:19}) and \ref{eq:20}), with $P_1$ and $G_0$ given in (\ref{eq:28a}), is maximally dissipative.
Choosing $n_2=n_1$, we see that all the conditions needed in Theorem \ref{T:12} are satisfied.

Choosing $\mathcal E=\mathcal E_I$ and $\mathcal Q=I$, the operator from (\ref{eq:4:HZ}) is given as
\begin{equation}
\label{eq:31}
  \begin{bmatrix} P_{1,1} & I \end{bmatrix}\frac{d}{d\zeta} \begin{bmatrix} x_1 \\ x_2 \end{bmatrix} +P_0 x_1 \mbox{ with } \frac{dx_1}{d\zeta} - P_2^{-1} x_2 =0,
\end{equation}
with domain
\begin{equation}
\label{eq:32}
 \{ x_1,x_2 \in H^1((0,1); {\mathbb R}^{n_1}) \mid W_B \left[ \begin{smallmatrix} x_1(1) \\  x_2(1) \\ x_1(0) \\  x_2 (0) \end{smallmatrix} \right] =0\}.
\end{equation}
\mbox{}From (\ref{eq:31}) we obtain $x_2 =P_{2} \frac{dx_1}{d\zeta}$. Substituting this in the first equality of (\ref{eq:31}) and in (\ref{eq:32}) gives the operator  $\mathcal A_{red}$ and its domain as asserted.
\eproof
\medskip

We have shown how different models can be constructed out of the wave equation model by imposing a closure relation. This is the opposite construction as is usually done in Stokes or Oseen equations where the heat equation is obtain by a restriction, see \cite{EmmM13,Tem77}.

In the following example we show that we can as well obtain coupled PDEs which act on different physical domains.
\begin{example}
\label{E:15}{\rm
Consider the operator ${\mathcal A}$ of equation (\ref{eq:19}) with
\[
  P_1= \begin{bmatrix} 0&1& 0 &0 \\ 1& 0 & 0& 0\\ 0&0& 0  & 1 \\ 0& 0& 1& 0\end{bmatrix}, \quad G_0= \begin{bmatrix} 0&0& 0 &0 \\ 0& 0 & 0& 0\\ 0&0& 0  & 0 \\ 0& 0& 0 & -r \end{bmatrix}
\]
with $r$ is a bounded and invertible function satisfying Re$\left(r(\zeta)\right) \geq 0$ for all $\zeta \in [0,1]$. We choose its domain to be given by (\ref{eq:20}) with
\[
  W_B = \begin{bmatrix} 1&0& 0 &0 & 0 & 0 & -1 &0 \\ 0&1& 0 &0 & 0 & 0 & 0 &-1 \\ 0&0& 0 &0&  1 & 0 & 0 & 0 \\ 0&0& 0 &1&  0 & 0 & 0 & 0  \end{bmatrix}.
\]
It is clear that this is of full rank, and it is not hard to see that (\ref{eq:21}) holds.
We take $n_1=3$, $n_2=1$. With these choices it is straightforward to see that the conditions of  Theorem~\ref{T:12} hold, and so $(\mathcal E_I,{\mathcal A})$ is regular. Using Corollary \ref{C:8}, we can build the operator $\mathcal A_{red}$ from Theorem~\ref{T:2HZ}. For this we choose $\mathcal E=\mathcal E_I$,  and $\mathcal Q=\mathrm{diag}( \rho^{-1}, T, 1, 0)$ with $\rho,T$ (strictly) positive functions. This operator then satisfies
\[
  \mathcal A_{red}x_1 = \begin{bmatrix} \frac{d (T x_{1,2})}{d\zeta} \\  \frac{d (\rho^{-1}x_{1,1})}{d\zeta} \\  \frac{dx_{2}}{d\zeta} \end{bmatrix} \mbox{ with }  \frac{dx_{1,3}}{d\zeta} = r x_2.
\]
The corresponding PDE splits into the two PDEs
\[
  \frac{\partial}{\partial t} \begin{bmatrix} x_{1,1} \\ x_{1,2} \end{bmatrix} =  \begin{bmatrix} 0 & 1 \\ 1 & 0 \end{bmatrix} \frac{\partial}{\partial \zeta} \left (\begin{bmatrix} \rho^{-1} & 0 \\ 0 & T \end{bmatrix} \begin{bmatrix} x_{1,1} \\ x_{1,2} \end{bmatrix} \right )\mbox{ and } \frac{\partial x_{1,3}}{\partial t} =   \frac{\partial}{\partial\zeta} \left[ r^{-1} \frac{\partial x_{1,3}}{\partial\zeta}\right] .
\]
In the first PDE we recognise the wave equation, whereas the second is a heat/diffusion equation. They seem to be uncoupled, but we have not looked at the boundary conditions of $\mathcal A$. Using the closure relation $\frac{dx_{1,3}}{d\zeta} = r x_2$, we see that the boundary conditions become
\[
  \rho^{-1} x_{1,1}(1) = x_{1,3}(0),~  T x_{1,2}(1) = r^{-1} \frac{\partial x_{1,3}}{\partial\zeta}(0),~ x_{1,1}(0) = 0 \mbox{ and } \frac{\partial x_{1,3}}{\partial\zeta}(1) =0.
\]
So in this way the heat equation is coupled at the boundary to the wave equation, but certainly other couplings are possible as well.
}
\end{example}

The proof that $\mathcal A$ with $D(\mathcal A)$ given in (\ref{eq:19}) and (\ref{eq:20}) is maximally dissipative if and only if (\ref{eq:21}) holds was given in \cite{GoZM05} by using  boundary triplets, which will be the topic of the next subsection.
\subsection{Boundary triplets}\label{sec:boundarytriplets}

In this section we illustrate that our approach can also be used in the context of boundary triplets to derive results that have also been obtained via different approaches.
We begin by recalling the concept of a \emph{boundary triplet}. Let $\mathcal A_m$ be a densely defined operator on a Hilbert space $\mathcal H$ with dual $\mathcal A_m^*$, and let $\Gamma_1,\Gamma_2$ be two linear mappings from $D(\mathcal A_m^*)$ to another Hilbert space ${\mathbb U}$. The triplet $({\mathbb U}, \Gamma_1,\Gamma_2)$ is a {\em boundary triplet}\/ if the following conditions are satisfied, see \cite[section 3.1]{GoGo91}:
\begin{enumerate}
\item For all $f,g \in D(\mathcal A_m^*)$ it holds that
\begin{equation}
\label{eq:4.3.1}
  \langle \mathcal A_m^* f, g\rangle_{\mathcal H} - \langle f, \mathcal A_m^* g\rangle_{\mathcal H} = \langle \Gamma_1f,\Gamma_2 g\rangle_{{\mathbb U}} - \langle \Gamma_2f,\Gamma_1 g\rangle_{{\mathbb U}} .
\end{equation}
\item For all $u_1,u_2 \in {\mathbb U}$ there exists $f \in D(\mathcal A_m^*)$ such that $\Gamma_1 f=u_1$ and $\Gamma_2 f=u_2$.
\end{enumerate}
By choosing $f$ in the kernel of the boundary operators $\Gamma_1$ and $\Gamma_2$, we see that the corresponding restriction of $\mathcal A_m^*$ is symmetric, and not skew-symmetric, as is normally the case for generators of contraction semigroups. Therefore we will work with $i\mathcal A_m^*$ and $i\Gamma_1,\Gamma_2$.

For these operators, (\ref{eq:4.3.1}) becomes (equivalently)
\begin{equation}
\label{eq:4.3.2}
   \langle i \mathcal A_m^*f, g\rangle_{\mathcal H} + \langle f, i\mathcal A_m^* g\rangle_{\mathcal H} = \langle i\Gamma_1f,\Gamma_2 g\rangle_{{\mathbb U}} + \langle \Gamma_2f,i \Gamma_1 g\rangle_{{\mathbb U}} .
\end{equation}
In Theorem 3.1.6 of \cite{GoGo91} it is shown that $i\mathcal A_m^*$ restricted to the domain
\begin{equation}
\label{eq:35}
  \{x_0 \in D(\mathcal A_m^*) \mid (\mathcal K-I) \Gamma_1 x_0 + i(\mathcal K + I) \Gamma_2 x_0 =0\}
\end{equation}
with $\mathcal K$ satisfying $\|\mathcal K\|\leq 1$, are all maximally dissipative %\footnote{Note that accumulative in \cite{GoGo91} is dissipative here, see \cite[p.149]{GoGo91}}
restrictions of $i\mathcal A_m^*$. We will show that this result can be obtained alternatively via Theorem \ref{T:2HZ}.

To show this, for a given  boundary triplet, we define ${\mathbb X}_1=\mathcal H$, ${\mathbb X}_2={\mathbb U}$, and %(see \cite{TuWe14}???)
\begin{equation}
\label{eq:4.3.3}
  {\mathcal A} = \left[\begin{array}{c} {\mathcal A}_1 \\  {\mathcal A}_2 \end{array}\right] = \left[\begin{array}{c} {\mathcal A}_1 \\  \frac{1}{2} L ( -i \Gamma_1 + \Gamma_2)  \quad -
  \frac{1}{2} I  \end{array}\right]
\end{equation}
with
\[
  {\mathcal A}_1  \left[\begin{array}{c} x_1\\ u \end{array} \right] = i \mathcal A_m^* x_1,
\]
\begin{equation}
\label{eq:4.3.4}
  D( {\mathcal A} ) = \left\{ \left[\begin{array}{c} x_1\\ u \end{array} \right] \mid x_1 \in D(\mathcal A_m^*) \mbox{ with } (i\Gamma_1 + \Gamma_2)x_1 = u  \right\},
\end{equation}
and %$\alpha \in {\mathbb R}$,
$L \in {\mathcal L}({\mathbb U})$.

%Given the boundary triplet, we define $X_1=H$, $X_2=U\oplus U$, and (see \cite{TuWe14}???)
%\begin{equation}
%\label{eq:4.3.3}
%  {\mathcal A} = \left[\begin{array}{c} {\mathcal A}_1 \\  {\mathcal A}_2 \end{array}\right] = \left[\begin{array}{c} {\mathcal A}_1 \\ -\beta_2 \Gamma_2 \quad K_{11} \quad K_{12} \\  -i\beta_1 \Gamma_1 \quad  K_{21} \quad K_{22} \end{array}\right]
%\end{equation}
%with
%\[
%  {\mathcal A}_1  \left[\begin{array}{c} x_1\\ u\\ y_0\end{array} \right] = i A_m x_1,
%\]
%and
%\begin{equation}
%\label{eq:4.3.4}
%  D( {\mathcal A} ) = \left\{ \left[\begin{array}{c} x_1\\ u\\ y_0\end{array} \right] \mid x_1 \in D(A_m) \mbox{ with } i\Gamma_1 x_1 = \alpha_1 u, \Gamma_2x_1= \alpha_2 y_0 \right\},
%\end{equation}
%and $K_{ij} \in {\mathcal L}(U)$, $\alpha_1,\alpha_2,\beta_1,\beta_2 \in {\mathbb R}$.

Next we study the dissipativity of ${\mathcal A}$.
By the definition of ${\mathcal A}$ and relation (\ref{eq:4.3.2}) we find
\begin{align*}
  \langle {\mathcal A}\left[\begin{array}{c} x_1\\ u \end{array} \right] , \left[\begin{array}{c} x_1\\ u \end{array} \right] \rangle &+ \langle \left[\begin{array}{c} x_1\\ u\end{array} \right] , {\mathcal A}\left[\begin{array}{c} x_1\\ u \end{array} \right] \rangle\\
  & =
  \langle i {\mathcal A}_m^* x_1, x_1\rangle + \langle  x_1, i {\mathcal A}_m^* x_1\rangle + \\
  &\quad  \langle \frac{1}{2} L (-i  \Gamma_1 +  \Gamma_2) x_1, u\rangle_{{\mathbb U}} +  \langle  u, \frac{1}{2} L (-i  \Gamma_1 +  \Gamma_2)x_1\rangle_{{\mathbb U}} \\
  & \quad - \langle \frac{1}{2} u,u\rangle_{{\mathbb U}} - \langle u, \frac{1}{2} u\rangle_{{\mathbb U}} \\
  &=
  \langle i\Gamma_1x_1,\Gamma_2 x_1\rangle_{{\mathbb U}} + \langle \Gamma_2x_1,i \Gamma_1 x_1\rangle_{{\mathbb U}} + \\
  &\quad \langle \frac{1}{2} L (-i  \Gamma_1 +  \Gamma_2) x_1, u\rangle_{{\mathbb U}} +  \langle  u, \frac{1}{2} L (-i  \Gamma_1 +  \Gamma_2)x_1\rangle_{{\mathbb U}}  -  \langle  u,u\rangle_{{\mathbb U}} .
\end{align*}
Now we define $y = (-i  \Gamma_1 +  \Gamma_2) x_1$, and using (\ref{eq:4.3.4}) it is easy to see that
\[
   \langle i\Gamma_1x_1,\Gamma_2 x_1\rangle_{{\mathbb U}} + \langle \Gamma_2x_1,i \Gamma_1 x_1\rangle_{{\mathbb U}} = \frac{1}{2} \langle u,u\rangle_{{\mathbb U}} - \frac{1}{2}  \langle y,y\rangle_{{\mathbb U}}.
\]
Hence
\begin{align}
\nonumber
  \langle \mathcal A \left[\begin{array}{c} x_1\\ u \end{array} \right] , \left[\begin{array}{c} x_1\\ u \end{array} \right] \rangle &+ \langle \left[\begin{array}{c} x_1\\ u\end{array} \right] , {\mathcal A}\left[\begin{array}{c} x_1\\ u \end{array} \right] \rangle\\
\nonumber
  &=
  \frac{1}{2} \langle u,u\rangle_{{\mathbb U}} - \frac{1}{2}  \langle y,y\rangle_{{\mathbb U}} + \\
  \nonumber
  &\quad \langle \frac{1}{2} L y, u\rangle_{{\mathbb U}} +  \langle  u, \frac{1}{2} L y\rangle_{{\mathbb U}}  -  \langle  u,u\rangle_{{\mathbb U}} \\
  \label{eq:4.3.5}
   &=
  \langle \left[\begin{array}{cc} -\frac{1}{2}  I  &  \frac{1}{2} L \\  \frac{1}{2} L^* & -\frac{1}{2} I \end{array} \right] \left[\begin{array}{c} u  \\   y\end{array} \right] , \left[\begin{array}{c}  u  \\   y \end{array} \right] \rangle_{{\mathbb U}\oplus { \mathbb U}} .
 \end{align}
Using the equality
\[
  \left[\begin{array}{cc} -\frac{1}{2} I  &  \frac{1}{2} L \\  \frac{1}{2} L^* & -\frac{1}{2} I \end{array} \right] =
  \left[\begin{array}{cc}I  &  - L \\  0& I \end{array} \right]
   \left[\begin{array}{cc} -\frac{1}{2} I + \frac{1}{2} LL^* & 0 \\  0& -\frac{1}{2} I \end{array} \right]
    \left[\begin{array}{cc}I  & 0\\  -L^* & I \end{array} \right]
\]
and  (\ref{eq:4.3.5}) we see that %for $\alpha=\frac{1}{2}$
the operator $\mathcal A$ is dissipative if and only if $LL^* \leq I$, or equivalently if $\|L\| \leq 1$.
Next we choose $\mathcal E=\mathcal E_I$ and $\mathcal Q=I$, and so for $\|L\| \leq 1$ all conditions in Assumption \ref{A1:HZ} are satisfied except possibly the  regularity. By Lemma \ref{L:11}, the regularity can be checked by the maximally dissipativity of {$\mathcal A_{red}$, the closedness of $\mathcal A$}, and ${\mathcal A}_2$ being surjective. Since the pair $(\Gamma_1, \Gamma_2)$ is surjective, it follows that for every $u\in {\mathbb U}$ there exists an $x_1 \in D(\mathcal A_m^*)$ such that $(-i\Gamma_1 +\Gamma_2)x_1 =0$ and $(i\Gamma_1 +\Gamma_2)x_1 =-2u$. Hence $\left[\begin{smallmatrix} x_1\\-2u\end{smallmatrix} \right] \in D({\mathcal A})$, and ${\mathcal A}_2\left[\begin{smallmatrix} x_1\\-2u\end{smallmatrix} \right] =u$, and thus ${\mathcal A}_2$ is surjective.
That ${\mathcal A}$ is closed follows from the fact that $\mathcal A_m^*$ is closed.

So to obtain the regularity, we have to study the operator $\mathcal A_{red}$, i.e., the closure of ${\mathcal A}_{red}$.
%Therefor we first need the $A$.
Note that the definition of the domain of $\mathcal A_{red}$ already gives that the condition $\left[\begin{smallmatrix} 0\\u\end{smallmatrix} \right] \in D(\mathcal A_{red})$ implies that $u=0$. So all conditions of Theorem \ref{T:2HZ} are satisfied.

We find that $\mathcal A_{red}$ is given via
\[
  \mathcal A_{red}x_1 = i\mathcal A_m^* x_1
\]
with domain
\begin{align*}
  D(\mathcal A_{red}) =&\ \{ x_1 \in D(\mathcal A_m^*) \mid (i\Gamma_1 + \Gamma_2)x_1 = u = L(-i\Gamma_1 + \Gamma_2)x_1 \}\\
   =&\  \{ x_1 \in D{(\mathcal A_m^*)} \mid (L+I) i\Gamma_1x_1 + (-L+I) \Gamma_2x_1  =0 \}.
\end{align*}
Multiplying this expression with $i$ and taking $\mathcal K=-L$ we obtain (\ref{eq:35}), i.e., the condition of \cite{GoGo91}.

To complete the regularity proof it remains to show that $\mathcal A_{red}$ is maximally dissipative which is shown in \cite{GoGo91}.

In this subsection we have seen that boundary triplets fit into the framework of adHDAEs and in the next subsection we show this for impedance passive systems.

\subsection{Impedance passive systems}
\label{sec:4.3}

 Let $\mathcal H$, ${\mathbb V}$, and ${{\mathbb U}}$ be Hilbert spaces and let $\left[ \begin{smallmatrix} L \\ K_0 \end{smallmatrix} \right]$ be a closed operator from ${\mathbb V}$ to $\mathcal H \oplus { \mathbb U}$. We define ${\mathbb V}_0:=D\left([ \begin{smallmatrix} L \\ K_0 \end{smallmatrix} \right]) \subset {\mathbb V}$. Since $\left[ \begin{smallmatrix} L \\ K_0 \end{smallmatrix} \right]$ is closed, ${\mathbb V}_0$ with its graph norm is a Hilbert space and  $\left[ \begin{smallmatrix} L \\ K_0 \end{smallmatrix} \right]$ is a bounded operator from ${\mathbb V}_0$ to $\mathcal H \oplus {\mathbb U}$. Therefore $L^*$ and $K_0^*$ are in ${\mathcal L}(\mathcal H, {\mathbb V}_0^*)$ and ${\mathcal L}({\mathbb U}, {\mathbb V}_0^*)$, respectively. We view ${\mathbb V}$ as the pivot space, i.e., ${\mathbb V}_0 \subsethook {\mathbb V} = {\mathbb V}^* \subsethook {\mathbb V}_0^*$ are subsets with dense continuous injections.

Motivated by the Maxwell equation as well as the (damped) beam equation, the following system was introduced in \cite{StWe12}.
\begin{equation}
\label{eq:39}
  \dot{x}(t) = \left[\begin{array}{cc} 0 & - L \\ L^* & G - K^*_0 K_0 \end{array}\right]x(t) + \begin{bmatrix} 0 \\ \sqrt{2} K_0^* \end{bmatrix}u(t), \quad y(t) = \begin{bmatrix} 0 & -\sqrt{2} K_0 \end{bmatrix} x(t),
\end{equation}
on the state space ${\mathbb X}_1 = \mathcal H \oplus {\mathbb V}$.
Here $L, K_0$ satisfy the properties stated above, and
$G \in {\mathcal L}({\mathbb V}_0, {\mathbb V}_0^*)$. With our notation, we see how we have to interpret (\ref{eq:39}). Namely, the system operator has the following domain
\begin{equation}
\label{eq:40}
  D\left( \left[\begin{smallmatrix} 0 & - L \\ L^* & G - K^*_0 K_0 \end{smallmatrix}\right]\right) =\left\{ \begin{bmatrix}h\\ v\end{bmatrix}\in \mathcal H\oplus {\mathbb V}_0 \mid L^*h + (G-K_0^*K_0)v \in {\mathbb V}\right\},
\end{equation}
where the addition is done in ${\mathbb V}_0^*$.
For the rest of this subsection we concentrate on this system operator.

For the study of the system in \cite{StWe12} the following operator is introduced
\begin{equation}
\label{eq:41}
  \mathcal T = \left[\begin{array}{ccc} 0 & - L & 0 \\ L^* & G & K_0^* \\ 0  & -K_0 & 0 \end{array}\right]
\end{equation}
with domain
\begin{equation}
\label{eq:42}
  D(\mathcal T) = \left\{  \left[\begin{array}{c} h\\ e\\  u \end{array} \right] \in \mathcal H \oplus {\mathbb V}_0 \oplus {\mathbb U} \mid L^*h + Ge + K_0^*u \in {{\mathbb V}} \right\}.
\end{equation}
In \cite{StWe12} it is shown that $\mathcal T$ is maximally dissipative\footnote{Actually in \cite{StWe12} it is shown that $\mathcal T$ is m-dissipative which in our situation is equivalent to being maximally dissipative, see Lemma \ref{L:A4}.} if
\begin{equation}
\label{eq:43}
  \mathrm{Re} \langle Ge, e\rangle_{{\mathbb V}_0^*, {\mathbb V}_0} \leq 0.
\end{equation}
Under this condition, they apply an ``external Cayley transform'' to show that the system (\ref{eq:39}) is well-defined. This gives that the system operator generates a contraction semigroup, and thus is maximally dissipative.
We will show that this result can also be obtained via our techniques. For this we define ${\mathbb X}_2= {\mathbb U}$, and
\[
  {\mathcal A} = \left[\begin{array}{ccc} 0 & - L & 0 \\ L^* & G & K_0^* \\ 0  & -K_0 & -I \end{array}\right]
\]
with the domain given by that of $\mathcal T$, see (\ref{eq:42}).
Since ${\mathcal A}$ differs from $\mathcal T$ by just the $-I$ is the lower right corner it is also maximally dissipative when (\ref{eq:43}) holds.
We choose $\mathcal E=\mathcal E_I$ and $\mathcal Q=I$. Since $\mathcal T$ is maximally dissipative, we have that ${\mathcal A} + \left[\begin{smallmatrix} 0 & 0\\ 0 & I \end{smallmatrix} \right]$ is maximally dissipative, and so by Lemma \ref{L:9} $(\mathcal E,{\mathcal A}\mathcal Q)$ is regular.
Again by the $-I$ is the lower right corner of ${\mathcal A}$, we see that the condition of Theorem \ref{T:2HZ} is satisfied, and thus the operator $\mathcal A_{red}$ defined by
\[
  \mathcal A_{red}x_1 = \mathcal A_{red}\begin{bmatrix} h \\ e \end{bmatrix} = \left[\begin{array}{ccc} 0 & - L & 0 \\ L^* & G & K_0^*  \end{array}\right]\begin{bmatrix} h \\ e \\ u \end{bmatrix} \mbox{ with } -K_0e-u=0
\]
generates a contraction semigroup. It is now straightforward to see that this is the system operator from (\ref{eq:39}) and (\ref{eq:40}). So applying Theorem \ref{T:2HZ} we obtain the result of \cite{StWe12}.

In \cite{StWe12} a similar result is also obtained for the Maxwell equations.

In general, we can regard the condition ${\mathcal A}_2 \left[\begin{smallmatrix} x_1\\ x_2 \end{smallmatrix} \right]$ as a closure relation, but also as an output feedback, as we will discuss in the next subsection.

\subsection{Output feedback and systems}
\label{sec:4.4}

In this section we study output feedback, i.e., we look at $\mathcal A_{cl}=\mathcal A_0 - \mathcal B\mathcal K\mathcal C$. We can regard $z= \mathcal A_{cl}x_1$ as the solution of
\begin{equation}\label{eq:feedback}
     z=\mathcal A_0x_1 +\mathcal Bu \mbox{ with } \mathcal C{x_1} + \mathcal K^{-1} u =0,
\end{equation}
but then we would have to assume that $\mathcal K$ is invertible. In the following example we  will show that this assumption can be removed.
%To motivate the set-up, we first consider %a simple infinite-dimensional system with %bounded input and output operators.
%
\begin{example}
\label{E:18}{\rm
%{ the lower bound on B0 follows only if it is injective.
%Is that the case or is an assumption missing here? We could use the new formumation, but we do another proof. HZ}

It is easy to see that if $\mathcal A_0$ generates a contraction semigroup, so will $\mathcal A_0-\mathcal R$ for any bounded, $\mathcal R$ with $-\mathcal R$ dissipative. In this example we show this using Theorem \ref{T:2HZ}. For this we define
\[
   {\mathcal A} = \begin{bmatrix} \mathcal A_0 & \mathcal B_0 &0 \\ -\mathcal B_0^* & 0 & I \\ 0 & -I & - K \end{bmatrix},
\]
where $(\mathcal A_0, D(\mathcal A_0))$ generates a contraction semigroup on  the Hilbert space ${\mathbb Z}$, $\mathcal B_0 \in {\mathcal L}({\mathbb U},{\mathbb Z})$, and  $\mathcal K \in {\mathcal L}({\mathbb U},{\mathbb U})$, with $\mathcal K+\mathcal K^* \geq 0$. Here ${\mathbb U}$ is another Hilbert space. The domain of $\mathcal A$ is given by $D(\mathcal A_0) \oplus {\mathbb U} \oplus {\mathbb U}$.  Note that to apply our results we could even allow that $\mathcal B_0$ is unbounded, but here we apply it for bounded $\mathcal B_0$.

Next we choose ${\mathbb X}_1= {\mathbb Z}$, ${\mathbb X}_2= {\mathbb U}\oplus {\mathbb U}$, $\mathcal E = \mathcal E_I$ and $\mathcal Q=I$. Since $\mathcal A_0$ is a generator of a contraction semigroup, it is clear that the first three conditions of Assumptions \ref{A1:HZ} are fulfilled. It remains the show that $(\mathcal E_I, {\mathcal A})$ is regular.

Take $s$ in the right half plane, then by the maximal dissipativity of $\mathcal A_0$,
 \[
   (s\mathcal E_I-{\mathcal A}) \begin{bmatrix} x\\ u \\ y \end{bmatrix} = \begin{bmatrix} z\\ v \\ w \end{bmatrix} \Leftrightarrow \begin{bmatrix} x\\ \mathcal B_0^*x - y \\ u+\mathcal Ky \end{bmatrix} = \begin{bmatrix} (sI-\mathcal A_0)^{-1} z + (sI-\mathcal A_0)^{-1}\mathcal B_0 u \\ v \\ w \end{bmatrix}.
\]
Substituting the expression for $x$ into the second row, the following two equations remain to be solved:
\begin{equation}
\label{eq:59}
  \begin{bmatrix} \mathcal B_0^*(sI-\mathcal A_0)^{-1}\mathcal B_0 & -I  \\ I & \mathcal K \end{bmatrix}\begin{bmatrix}  u \\ y\end{bmatrix} = \begin{bmatrix}  v - \mathcal B_0^*(sI-\mathcal A_0)^{-1}z \\ w \end{bmatrix}.
\end{equation}
Since $\mathcal B_0$ is bounded and $\mathcal A_0$ generates a contraction semigroup, the transfer function $\mathcal B_0^*(sI-\mathcal A_0)^{-1}\mathcal B_0$ converges to zero as $s \rightarrow \infty$. Combined with the fact that $\left[ \begin{smallmatrix} 0 & -I  \\ I & \mathcal K \end{smallmatrix} \right]$ is boundedly invertible, we see that the left hand side of (\ref{eq:59}) is boundedly invertible for $s$ sufficiently large.

So the conditions of Theorem \ref{T:2HZ} are satisfied, and we can construct the corresponding ${\mathcal A}_{red}$. It is given via
\[
  {\mathcal A}_{red}x_1 = \mathcal A_0x_1 +\mathcal B_0u \mbox{ with } -\mathcal B_0^*x_1 + y =0 \mbox{ and } u+ \mathcal K y =0
\]
The latter two properties give $u= -\mathcal Ky=-\mathcal K\mathcal B_0^*x_1$, and so  $\mathcal A$ becomes
\[
 \mathcal  A = \mathcal A_0 - \mathcal B_0{ \mathcal K}\mathcal B_0^*,
\]
which we can view as an output feedback on the system $\dot{x}_1(t) = \mathcal A_0 x_1(t) + \mathcal B_0 u(t), y(t) = \mathcal B_0^* x_1(t)$.

{ After applying the feedback we can again incorporate an input and an output,
%we lost our input signal, but to keep a system with inputs and outputs we can consider
by considering} the following ${\mathcal A}$ on the space  ${\mathbb X}= \mathbb Z \oplus {\mathbb U}_1 \oplus {\mathbb U} \oplus {\mathbb U}$, where ${\mathbb U}_1$ is a Hilbert space, and $\mathcal B_1 \in {\mathcal L}({\mathbb U}_1,{\mathbb Z})$.
\[
   {\mathcal A} = \begin{bmatrix} \mathcal A_0 & \mathcal B_1 &\mathcal B_0 & 0 \\ -\mathcal B_1^* & 0 & 0 & 0 \\ -\mathcal B_0^* & 0 & 0 & I \\  0 & 0 & -I & \mathcal K \end{bmatrix}.
\]
We split the space as ${\mathbb X}_1=\mathbb Z\oplus {\mathbb U}_1$, ${\mathbb X}_2={\mathbb U}\oplus {\mathbb U}$, and so the last two rows of ${\mathcal A}$ form ${\mathcal A}_2$.

Choosing $\mathcal E=\mathcal E_I$ and $\mathcal Q=I$, then $\mathcal A$ obtained after applying the closure relation, is given by
\[
  \mathcal A=\begin{bmatrix} \mathcal A_0-\mathcal B_0{ \mathcal K}\mathcal B_0^* & \mathcal B_1 \\ -\mathcal B_1^* & 0 \end{bmatrix}
\]
which is maximally dissipative. This implies that the system
\[
  \dot{z}(t) = (\mathcal A_0-\mathcal B_0{ \mathcal K}\mathcal B_0^*)z(t) + \mathcal B_1 u_1(t), \quad y_1 (t) = -\mathcal B_1^* z(t)
\]
is impedance passive. Note that with the choice of $\mathcal Q=\mathrm{diag}(\mathcal Q_1, I, I , I)$, we get impedance passivity with the storage function $q(z) = \langle z, \mathcal Q_1 z\rangle$, see \cite[Theorem 7.5.4]{CuZw20}.
}
\end{example}

%The above example can easily be extended to systems nodes. %Consider the dissipative operator/system node $\Sigma= %\begin{bmatrix} A\&B \\ -C\&D \end{bmatrix}$, see %\cite{Staf05}. Based on Theorem \ref{T:11} we see that the %output feedback $y=Ku$ gives a contraction semigroup if %the transfer function of the extended systems node
%\[
%  {\mathcal A} = \begin{bmatrix} A\&B & 0 \\ -C\&D & I \\ %0~-I & -K \end{bmatrix}
%\]
%has an invertible transfer function for some $s \in %{\mathbb C}^+$. This holds if $G(s)K + I$ is boundedly %invertible, where $G$ is the transfer function of the %original system.

\section{Existence of solutions on a subspace}
\label{sec:5}

In the previous section we have considered the operator $\mathcal A$ under the condition that ${\mathcal A}_2$ was injective on $\{0\}\oplus {\mathbb X}_2 \cap D({\mathcal A})$. In the following theorem we use a stronger assumption and study the existence of solutions to \eqref{eq:1HZ}.
%We no longer can expect that $\mathcal A$ will %generate a semigroup on the whole of ${\color{blue} \mathbb X}_1$. %Note that for a specific example both theorems may be %applicable, but since the next theorem has the %strongest assumptions, the previous theorem is %preferred (when applicable).
%
\begin{theorem}
\label{T:6}
Consider an adHDAE \eqref{eq:1HZ} with operators $\mathcal E$, ${\mathcal A}$, $\mathcal Q$ and the Hilbert spaces ${\mathbb X}, {\mathbb X}_1$, and ${\mathbb X}_2$ satisfying the conditions of Assumption \ref{A1:HZ}.
Define $\mathcal W_0 \subset {\mathbb X}_1$ as the first component of the kernel of ${\mathcal A}_2\left[\begin{smallmatrix} \mathcal Q_1  \\ \mathcal Q_2 \end{smallmatrix}\right]$, i.e.,
\begin{equation}
\label{eq:5.1}
   \mathcal W_0= \{ x_1 \in {\mathbb X}_1 \mid \exists \ x_2\in {\mathbb X}_2 \mbox{ s.t. }  \left[\begin{smallmatrix} \mathcal Q_1x_1 \\ \mathcal Q_2 x_2 \end{smallmatrix}\right] \in D({\mathcal A}) \mbox{ and }{\mathcal A}_2  \left[\begin{smallmatrix} \mathcal Q_1 x_1 \\ \mathcal Q_2x_2 \end{smallmatrix}\right] =0\}.
\end{equation}
 Let ${\mathbb X}_0 \subseteq {\mathbb X}_1$ be the closure of $\mathcal W_0$ in ${\mathbb X}_1$.
If %the following set only contains the zero element
\begin{equation}
\label{eq:5.2}
  \{ y_1 \in {\mathbb X}_0 \mid \exists \ x_2\in {\mathbb X}_2 \mbox{ s.t. } \left[\begin{smallmatrix} 0  \\ \mathcal Q_2x_2 \end{smallmatrix}\right] \in D({\mathcal A}) \mbox{ and } {\mathcal A}\left[\begin{smallmatrix} 0  \\ {\mathcal Q}_2x_2 \end{smallmatrix}\right] = \left[\begin{smallmatrix} \mathcal E_1 y_1  \\  0\end{smallmatrix}\right] \}=\{0\},
\end{equation}
then the operator $\mathcal A_{red} : D(\mathcal A_{red}) \subset {\mathbb X}_0 \rightarrow {\mathbb X}_0$ generates a contraction semigroup on ${\mathbb X}_0$, where the domain $D(\mathcal A_{red})$ is defined as
\begin{align}
\label{eq:7:HZ}
  D(\mathcal A_{red}) = \{ x_1 \in {\mathbb X}_0 \mid&\ \exists\ x_2 \in {\mathbb X}_2 \mbox{ such that } \left[\begin{smallmatrix} \mathcal Q_1x_1 \\ \mathcal Q_2x_2 \end{smallmatrix} \right]\in D({\mathcal A}), \\
  \nonumber
  &\ {\mathcal A}_2  \left[\begin{smallmatrix} \mathcal Q_1x_1 \\ \mathcal Q_2x_2 \end{smallmatrix} \right] =0 \mbox{ and } \mathcal E_1^{-1} {\mathcal A}_1  \left[\begin{smallmatrix} \mathcal Q_1 x_1 \\ \mathcal Q_2 x_ 2 \end{smallmatrix} \right] \in {\mathbb X}_0 \}
\end{align}
and for $x_1 \in D(\mathcal A_{red})$ the action of $\mathcal A_{red}$ is defined via
\begin{equation}
\label{eq:8:HZ}
  \mathcal A_{red}x_1 = \mathcal E_1^{-1} {\mathcal A}_1  \begin{bmatrix} \mathcal Q_1 x_1 \\ \mathcal Q_2 x_2 \end{bmatrix}.
\end{equation}
\end{theorem}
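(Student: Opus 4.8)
The plan is to verify the hypotheses of the Lumer--Phillips theorem for $\mathcal A_{red}$ on the Hilbert space $\mathcal X_0$, which I equip with the inner product $\langle\cdot,\cdot\rangle_{\mathcal E\mathcal Q}$ inherited from $\mathcal X_{1,\mathcal E\mathcal Q}$ in \eqref{eq:2a}; note $\mathcal X_0$ is closed in this norm since it is equivalent to that of $\mathcal X_1$, so it is again a Hilbert space. The argument runs parallel to the proof of Theorem~\ref{T:2HZ}: I must show that $\mathcal A_{red}$ is well defined, that it is dissipative, and that $sI-\mathcal A_{red}$ maps onto $\mathcal X_0$ for some $s>0$. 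The genuinely new ingredient, replacing the injectivity hypothesis of Theorem~\ref{T:2HZ}, is condition \eqref{eq:5.2}, whose role is precisely to make $\mathcal A_{red}$ single valued.

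First I would prove well-definedness. Suppose $x_1\in D(\mathcal A_{red})$ admits two partners $x_2,\tilde x_2$ satisfying the requirements in \eqref{eq:7:HZ}. Setting $w=x_2-\tilde x_2$, linearity of $\mathcal A$ gives $\left[\begin{smallmatrix} 0 \\ \mathcal Q_2 w\end{smallmatrix}\right]\in D(\mathcal A)$ with $\mathcal A_2\left[\begin{smallmatrix} 0 \\ \mathcal Q_2 w\end{smallmatrix}\right]=0$, while the two candidate values of $\mathcal A_{red}x_1$, both lying in $\mathcal X_0$ by the final clause of \eqref{eq:7:HZ}, differ by $y_1:=\mathcal E_1^{-1}\mathcal A_1\left[\begin{smallmatrix} 0 \\ \mathcal Q_2 w\end{smallmatrix}\right]\in\mathcal X_0$. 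Hence $\mathcal A\left[\begin{smallmatrix} 0 \\ \mathcal Q_2 w\end{smallmatrix}\right]=\left[\begin{smallmatrix} \mathcal E_1 y_1 \\ 0\end{smallmatrix}\right]$ with $y_1\in\mathcal X_0$, and \eqref{eq:5.2} forces $y_1=0$, so the two values coincide. I expect this to be the crux of the proof: recognizing that the domain clause $\mathcal E_1^{-1}\mathcal A_1\left[\begin{smallmatrix} \mathcal Q_1 x_1 \\ \mathcal Q_2 x_2\end{smallmatrix}\right]\in\mathcal X_0$ is exactly what places the difference $y_1$ inside $\mathcal X_0$, so that \eqref{eq:5.2} can be invoked.

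Dissipativity then follows by the same computation as in Theorem~\ref{T:2HZ}: for $x_1\in D(\mathcal A_{red})$ with partner $x_2$, using $\mathcal E_1^*\mathcal Q_1=\mathcal Q_1^*\mathcal E_1$ and $\mathcal A_2\left[\begin{smallmatrix} \mathcal Q_1 x_1 \\ \mathcal Q_2 x_2\end{smallmatrix}\right]=0$ one obtains $\langle \mathcal A_{red}x_1,x_1\rangle_{\mathcal E\mathcal Q}+\langle x_1,\mathcal A_{red}x_1\rangle_{\mathcal E\mathcal Q}=\langle \mathcal A\left[\begin{smallmatrix} \mathcal Q_1 x_1 \\ \mathcal Q_2 x_2\end{smallmatrix}\right],\left[\begin{smallmatrix} \mathcal Q_1 x_1 \\ \mathcal Q_2 x_2\end{smallmatrix}\right]\rangle+\langle\left[\begin{smallmatrix} \mathcal Q_1 x_1 \\ \mathcal Q_2 x_2\end{smallmatrix}\right],\mathcal A\left[\begin{smallmatrix} \mathcal Q_1 x_1 \\ \mathcal Q_2 x_2\end{smallmatrix}\right]\rangle\le 0$, by dissipativity of $\mathcal A$.

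Finally I would prove surjectivity. Fix $s>0$; by condition iv) of Assumption~\ref{A1:HZ} together with Theorem~\ref{t_commonQI} (equivalently Corollary~\ref{C:8}), the operator $s\mathcal E-\mathcal A\mathcal Q$ is boundedly invertible. Given $y_1\in\mathcal X_0$, solve $(s\mathcal E-\mathcal A\mathcal Q)\left[\begin{smallmatrix} x_1 \\ x_2\end{smallmatrix}\right]=\left[\begin{smallmatrix} \mathcal E_1 y_1 \\ 0\end{smallmatrix}\right]$. The bottom row reads $\mathcal A_2\left[\begin{smallmatrix} \mathcal Q_1 x_1 \\ \mathcal Q_2 x_2\end{smallmatrix}\right]=0$, so $x_1\in\mathcal W_0\subseteq\mathcal X_0$; the top row gives $\mathcal E_1^{-1}\mathcal A_1\left[\begin{smallmatrix} \mathcal Q_1 x_1 \\ \mathcal Q_2 x_2\end{smallmatrix}\right]=s x_1-y_1\in\mathcal X_0$, whence $x_1\in D(\mathcal A_{red})$ and $(sI-\mathcal A_{red})x_1=y_1$. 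Thus $sI-\mathcal A_{red}$ is onto $\mathcal X_0$. Being dissipative with $sI-\mathcal A_{red}$ surjective for some $s>0$, the operator $\mathcal A_{red}$ is maximally dissipative on the Hilbert space $\mathcal X_0$, hence densely defined and closed, and the Lumer--Phillips theorem (see \cite{Staf05}) yields the contraction semigroup. The only point requiring care beyond Theorem~\ref{T:2HZ} is keeping the dynamics inside $\mathcal X_0$, which is secured by $x_1\in\mathcal W_0$ and $y_1\in\mathcal X_0$ in the surjectivity step and by the domain clause used in the well-definedness step.
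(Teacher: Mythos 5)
Your proposal is correct and follows essentially the same route as the paper's proof: well-definedness of $\mathcal A_{red}$ via condition \eqref{eq:5.2} applied to the difference of two candidate values, dissipativity by the same inner-product computation in $\langle\cdot,\cdot\rangle_{\mathcal E\mathcal Q}$, and surjectivity of $sI-\mathcal A_{red}$ by solving $(s\mathcal E-\mathcal A\mathcal Q)\left[\begin{smallmatrix} x_1\\ x_2\end{smallmatrix}\right]=\left[\begin{smallmatrix}\mathcal E_1 y_1\\ 0\end{smallmatrix}\right]$ and reading off the two rows, followed by Lumer--Phillips. Your explicit remarks that $\mathcal X_0$ is a Hilbert space under the equivalent $\mathcal E\mathcal Q$-norm and that $sx_1-y_1\in\mathcal X_0$ by linearity of $\mathcal X_0$ only make precise what the paper leaves implicit.
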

\proof
First we have to prove that $\mathcal A_{red}$ is well-defined. Note that $D(\mathcal A_{red})\subset \mathcal W_0$. So if for a given $x_1\in D(\mathcal A_{red})$ we have that $x_2$ and $\tilde{x}_2$ are such that the conditions on the domain are satisfied for  $\left[\begin{smallmatrix} \mathcal Q_1x_1 \\\mathcal Q_2 x_2 \end{smallmatrix}\right]$ and $\left[\begin{smallmatrix}\mathcal Q_1x_1  \\\mathcal Q_2 \tilde{x}_2 \end{smallmatrix} \right]$, then by the linearity of ${\mathcal A}_2$, we have that
\[
  {\mathcal A}_2  \begin{bmatrix} 0 \\ \mathcal Q_2 x_2 -\mathcal Q_2 \tilde{x}_2\end{bmatrix} =0.
\]
Furthermore, we know that $y_1 :=\mathcal E_1^{-1} {\mathcal A}_1 \begin{bmatrix}\mathcal Q_1 x_1 \\\mathcal Q_2 x_2 \end{bmatrix}$ and $\tilde{y}_1:=\mathcal E_1^{-1} {\mathcal A}_1  \begin{bmatrix}\mathcal Q_1 x_1 \\\mathcal Q_2 \tilde{x}_2 \end{bmatrix}$ are in ${\mathbb X}_0$. Since ${\mathbb X}_0$ is a linear space, we find that
\[
  y_1 -\tilde{y}_1 = \mathcal E_1^{-1} {\mathcal A}_1  \begin{bmatrix} 0 \\ \mathcal Q_2 (x_2-\tilde{x}_2) \end{bmatrix} \in {\mathbb X}_0.
\]
Combining the two equations gives that
\[
  \mathcal A \begin{bmatrix} 0 \\\mathcal Q_2 (x_2 -  \tilde{x}_2) \end{bmatrix} =\begin{bmatrix} \mathcal A_1 \\ \mathcal A_2 \end{bmatrix}\begin{bmatrix} 0 \\\mathcal Q_2 (x_2 -  \tilde{x}_2) \end{bmatrix}= \begin{bmatrix} \mathcal E_1(y_1- \tilde{y}_1)  \\ 0 \end{bmatrix}
\]
with $y_1 -\tilde{y}_1 \in{\mathbb X}_0$. Our assumption gives that $y_1=\tilde{y}_1$, and thus $\mathcal A_{red}x_1$ is unique, and so is well-defined.

We have
\begin{align*}
  \langle \mathcal A_{red}x_1, x_1\rangle_{\mathcal E\mathcal Q} + \langle \mathcal A_{red}x_1, x_1\rangle_{\mathcal E\mathcal Q} =&\ \langle \mathcal E_1^{-1} {\mathcal A}_1  \begin{bmatrix} \mathcal Q_1x_1 \\\mathcal  Q_2x_2 \end{bmatrix},\mathcal  E_1^* \mathcal Q_1 x_1\rangle + \langle\mathcal  E_1^*\mathcal Q_1 x_1, \mathcal E_1^{-1} {\mathcal A}_1  \begin{bmatrix} \mathcal Q_1 x_1 \\\mathcal Q_2 x_2 \end{bmatrix} \rangle\\
  =&\ \langle {\mathcal A}  \begin{bmatrix} \mathcal Q_1x_1 \\ \mathcal Q_2x_2 \end{bmatrix}, \begin{bmatrix}\mathcal  Q_1x_1  \\\mathcal  Q_2x_2 \end{bmatrix}\rangle + \langle \begin{bmatrix} \mathcal Q_1x_1 \\ \mathcal Q_2x_2 \end{bmatrix}, {\mathcal A}  \begin{bmatrix} \mathcal Q_1x_1 \\ \mathcal Q_2x_2 \end{bmatrix} \rangle
  \leq \ 0,
  \end{align*}
where we have used that ${\mathcal A}_2  \left[\begin{smallmatrix}\mathcal  Q_1x_1  \\\mathcal  Q_2x_2 \end{smallmatrix}\right] =0$. % and the dissipativity of ${\mathcal A}$.
Hence $\mathcal A_{red}$ is dissipative.

Next we show that $sI-\mathcal A_{red}$ is onto, where $s$ is a complex number with positive real part in  the regularity assumption. For this, choose  $y_1 \in {\mathbb X}_0$. By the regularity assumption we know that there exists  $\left[\begin{smallmatrix} x_1 \\ x_2 \end{smallmatrix}\right] \in D({\mathcal A}{{\mathcal Q}})$ such that
\begin{equation}
  \label{eq:9HZ}
    \begin{bmatrix} \mathcal E_1 y_1 \\ 0  \end{bmatrix} = (s\mathcal E - {\mathcal A}\mathcal Q) \begin{bmatrix} x_1 \\ x_2  \end{bmatrix} = s \begin{bmatrix} \mathcal E_1 x_1 \\ 0 \end{bmatrix} - {\mathcal A}\begin{bmatrix} \mathcal Q_1x_1 \\\mathcal  Q_2x_2 \end{bmatrix} .
\end{equation}
The second row of this expression gives that
\[
    {\mathcal A}_2  \begin{bmatrix} \mathcal Q_1x_1 \\ \mathcal Q_2 x_2 \end{bmatrix} =0
\]
and so $x_1 \in \mathcal W_0$. The first row of \eqref{eq:9HZ} gives
\[
    s\mathcal E_1 x_1 - {\mathcal A}_1 \begin{bmatrix} \mathcal Q_1x_1 \\ \mathcal Q_2 x_2 \end{bmatrix} = \mathcal E_1y_1.
\]
Using that $y_1$ and $x_1$ are in ${\mathbb X}_0$, we get that $x_1 \in D(\mathcal A_{red})$, and $(sI-\mathcal A_{red})x_1 = y_1$. Hence $sI-\mathcal A_{red}$ is surjective for an $s \in {\mathbb C}^+$. By the Lumer-Phillips Theorem we conclude that $\mathcal A_{red}$ generates a contraction semigroup on ${\mathbb X}_0$.
\hfill \eproof
\medskip

We note that if we have a classical solution of
\[
  \dot{x}_1(t) = \mathcal A x_1(t),
\]
then $x_1(t) \in D(\mathcal A)\subset \mathcal W_0\subset { {\mathbb X}}_0$ for all $t \geq 0$, and thus there exists an $x_2(t)$ such that
\[
  \mathcal E_1\dot{x}_1(t) = {\mathcal A}_1  \begin{bmatrix} \mathcal Q_1x_1(t) \\ \mathcal Q_2 x_2(t)  \end{bmatrix} \mbox{ and } {\mathcal A}_2  \begin{bmatrix} \mathcal Q_1x_1(t) \\ \mathcal Q_2 x_2(t)  \end{bmatrix} =0.
\]
We can regard $x_2$ as the Lagrange multiplier enabling $x_1$ to stay in $\mathcal W_0$.
\begin{example}\label{ex:block}{\rm

Consider the system~\eqref{eq:feedback} but with $\mathcal K=0$, i.e., let
\[
   {\mathcal A} = \begin{bmatrix} \mathcal A_0 & \mathcal B_0 \\ -\mathcal B_0^* & 0 \end{bmatrix},
\]
where $\mathcal A_0$ is maximally dissipative on the Hilbert space ${\mathbb X}_1$, and $\mathcal B_0 \in {\mathcal L}({\mathbb U},{\mathbb X}_1)$ where $\mathcal B_0$ is injective and has closed range, i.e., there exists $\beta >0$ such that $\|\mathcal B_0 u \| \geq \beta \|u\|$, for all $u \in {\mathbb U}$. We choose ${\mathbb X}_2={\mathbb U}$.
To check the regularity for our class of $\mathcal E$ and $\mathcal Q$ it suffices to check it for $\mathcal E_I$ and $\mathcal Q=I$, see Corollary \ref{C:8}.

% This is a very simple case of a system node, and so by Theorem \ref{T:11},
We first study the invertibility of the transfer function $G(s)=\mathcal B_0^*(sI-\mathcal A)^{-1}\mathcal B_0$. It is well-known that $\lim_{s \rightarrow \infty} s G(s) = \mathcal B_0^*\mathcal B_0$, and by our assumption on $\mathcal B_0$ this inverse exists. So for $s$ sufficiently large $sG(s)$ and thus also $G(s)$ is boundedly invertible. Hence $(\mathcal E_I,{\mathcal A})$ is regular, and
%
%So we have to solve the following set of equations for arbitrary $z_1 \in X_1$ and $z_2 \in U$.
%\[
%  (sI- \mathcal A_0)x_1 - \mathcal B_0 u = z_1 \quad \mathcal B_0^* x_1 = z_2
%\]
%Since $\mathcal A_0$ is dissipative and Re$(s) >0$, we find
%\begin{equation}
%\label{eq:49}
%  x_1= (sI-A)^{-1}z_1 + (sI-A)^{-1}\mathcal B_0u,\quad \mathcal B_0^* x_1 = z_2
%\end{equation}
%Substituting the first expression into the second gives
%\[
%  \mathcal B_0^*(sI-A)^{-1}z_1 + \mathcal B_0^*(sI-A)^{-1}\mathcal B_0u =z_2
%\]
%Sinceis boundedly invertible, we can express $u$ in $z_1$ and $z_2$, and using the first equation of (\ref{eq:49}) once more, we also find $x_1$.
 so is $(\mathcal E,{\mathcal A}\mathcal Q)$. With this, we can define ${\mathbb X}_0$ and ${\mathcal A_{red}}$.

Using (\ref{eq:5.1}) we get that $\mathcal W_0 =\{ x_1 \in {\mathbb X}_1\mid \mathcal Q_1 x_1 \in D(\mathcal A_0)$ and $\mathcal Q_1 x_1 \in \ker \mathcal B_0^*\}$. So ${\mathbb X}_0= \mathcal Q_1^{-1} \overline{\ker \mathcal B_0^* \cap D(\mathcal A_0)} $. In many cases the domain of $\mathcal A_0$ will be dense in the kernel of $\mathcal B_0^*$, and thus in that case ${\mathbb X}_0 = \mathcal Q_1^{-1} \ker \mathcal B_0^*$.

The element $y_1$ is in the set defined by (\ref{eq:5.2}) if $\mathcal B_0\mathcal Q_2x_2=\mathcal E_1 y_1$ and ${\mathcal Q_1} y_1 \in \ker \mathcal B_0^*$. Thus $\mathcal B_0^*\mathcal Q_1\mathcal E_1^{-1} \mathcal B_0 \mathcal Q_2 x_2 =0$,  which implies that  $\langle \mathcal B_0 \mathcal Q_2 x_2 , \mathcal Q_1\mathcal E_1^{-1} \mathcal B_0 \mathcal Q_2 x_2\rangle =0$. Since $\mathcal Q_1\mathcal E_1^{-1}$ is coercive, this gives  $ \mathcal B_0 \mathcal Q_2 x_2 =0$ and thus $\mathcal E_1y_1=0$. The invertibility of  $\mathcal E_1$ finally gives $y_1=0$.

Thus, all the conditions of Theorem \ref{T:6} are satisfied. We choose $\mathcal E=\mathcal E_I$ and $\mathcal Q=I$, to study the $\mathcal A$ constructed in Theorem \ref{T:6}.
\[
\hat{\mathcal   A}x_1 = \mathcal A_0x_1 + \mathcal B_0u, \mbox{ with } x_1 \in D(\mathcal A_0), \mathcal B_0^* x_1 =0, \mbox{ and } \mathcal B_0^*(\mathcal A_0x_1 + \mathcal B_0u) =0.
\]
The last expression gives $u= -(\mathcal B_0^*\mathcal B_0)^{-1}\mathcal B_0^*\mathcal A_0x_1$, and so on ${\mathbb X}_0$ we have the  operator
\[
\mathcal A_{red}  x_1 = \left(\mathcal A_0 - \mathcal B_0(\mathcal B_0^*\mathcal B_0)^{-1}\mathcal B_0^*\mathcal A_0\right)x_1.
\]
Theorem~\ref{T:6} states that there is a well-defined dynamics on this space. If we interpret the second state component as the output, then this ${\mathbb X}_0$ has the interpretation as the output nulling subspace. It is well-known that the largest output nulling subspace exists when $\mathcal B_0^*\mathcal B_0$ is invertible, see \cite{Curt86} or\cite{Zwar89}.

In general, when $\mathcal C \in {\mathcal L}({\mathbb X}_1, {\mathbb U})$ is such that there exists a coercive $\mathcal Q_1 \in {\mathcal L}({\mathbb X}_1)$ such that $\mathcal C=\mathcal B_0^*\mathcal Q_1$, then we get, with $\mathcal E=\mathcal E_I$ and $\mathcal Q=\mathrm{diag}(\mathcal Q_1,I)$, that ${\mathbb X}_0 =\ker \mathcal C$, and
\[
\hat{ \mathcal   A}x_1 = \left(\mathcal A_0 - \mathcal B_0(\mathcal C\mathcal B_0)^{-1}{ \mathcal C}\mathcal A_0\right){ \mathcal Q_1}x_1.
\]
}% end rm
\end{example}

In the following example we study  the class studied in Theorem \ref{T:15}. However, the applications of this class are different, it contains e.g.\ the Oseen or Stokes equation, see \cite{EmmM13} and \cite{ReiS23}. The setup is similar as for the impedance passive systems studied in Subsection~\ref{sec:4.3}.
\begin{example}
\label{ex:stokes}{\rm
Let ${\mathbb V}$ be a real Hilbert space such that ${\mathbb V} \subsethook {\mathbb X}_1 = {\mathbb X}_1^* \subsethook {{\mathbb V}}^*$, Let $\mathcal A_0 \in {\mathcal L}({\mathbb V},{\mathbb V}^*)$, $\mathcal B_0 \in {\mathcal L}({\mathbb U},{\mathbb V}^*)$, where ${\mathbb U}$ is a second (real) Hilbert space. So  $\mathcal B_0^* \in {\mathcal L}({\mathbb V},{\mathbb U}^*)$. We identify ${\mathbb U}^*$ with ${\mathbb U}$. We assume that $\mathcal A_0$ is dissipative and $\mathcal B_0$ is injective and has closed range.

With these operators we define, see also (\ref{eq:50}) and (\ref{eq:50a}),
  \begin{equation}
%  \label{eq:50}
    {\mathcal A} = \begin{bmatrix} \mathcal A_0 & \mathcal B_0 \\-\mathcal B_0^* &0 \end{bmatrix}
  \end{equation}
  with domain
  \[
    D({\mathcal A}) = \{ \begin{bmatrix} v \\ u \end{bmatrix} \in {\mathbb V} \oplus {\mathbb U} \mid  \mathcal A_0v + \mathcal B_0u \in {\mathbb X}_1\}.
 \]
By Theorem \ref{T:15} we know that $(\mathcal E_I, {\mathcal A})$ is regular. So we can apply Theorem \ref{T:6} on this class.

By the definition of ${\mathcal A}$ we have that
\[
  \mathcal W_0 =\{ x_1 \in {\mathbb V} \mid \exists\ x_2 \in {\mathbb U} \mbox{ s.t. } \mathcal A_0x_1+\mathcal B_0x_2 \in {\mathbb X}_1, \mbox{ and } \mathcal B_0^*x_1 =0\}.
\]
Next we study the solution set of equation (\ref{eq:5.2}). Let $y_1 \in {\mathbb X}_0 = \overline{\mathcal W_0}$, i.e., the closure of $\mathcal W_0$ in ${\mathbb X}_1$, be such that there exists an $u \in\mathcal  U$ is such that
\begin{equation}
\label{eq:51}
  {\mathcal A}\begin{bmatrix} 0  \\ u  \end{bmatrix} =\begin{bmatrix} y_1  \\  0\end{bmatrix}.
\end{equation}

From the definition of the domain of ${\mathcal A}$ we obtain that $\mathcal B_0 u \in {\mathbb X}_1$. If $\mathcal B_0$ is completely unbounded, then this would imply that $u=0$, and thus $y_1=0$. Otherwise, since $y_1 \in {\mathbb X}_0$ there exists a sequence $z_n \in \mathcal W_0 \subset {\mathbb V}$ such that $z_n \rightarrow y_1$ in ${ \mathbb X}_1$. In particular, $\mathcal B_0^*z_n=0$. Combining this with the fact that $y_1=\mathcal B_0u$, see (\ref{eq:51}), we find
\[
   \langle y_1, y_1 \rangle_{{\mathbb X}_1} = \lim_{n \rightarrow \infty}  \langle z_n, \mathcal B_0 u\rangle_{{\mathbb X}_1} = \lim_{n \rightarrow \infty}  \langle z_n, \mathcal B_0 u\rangle_{{\mathbb V}, {\mathbb V}^*} = \lim_{n \rightarrow \infty}  \langle \mathcal B_0^*z_n, u\rangle_{{\mathbb U}} =0.
\]
Hence $y_1=0$. Thus the conditions of Theorem \ref{T:6} are satisfied.
}
\end{example}

A concrete application of the set-up in the previous example is given next.
\begin{example}\label{ex:oseen}{\rm

Consider, as in \cite{EmmM13}
a linearized Navier-Stokes equation and given by
\begin{align*}
  \frac{\partial v}{\partial t} - \alpha \Delta v + \nabla p =&\ 0\\
  \nabla^T v = &\ 0,
\end{align*}
on a  spatial domain $\Omega$.

For the abstract set-up of Example \ref{ex:stokes} we choose ${\mathbb V}= H_0^1(\Omega)$, ${\mathbb X}_1 = L^2(\Omega)$, and ${{\mathbb U}} = {\mathbb X}_2=L^2(\Omega)/{\mathbb R}$, i.e., two functions in ${\mathbb U}$ are considered to be the same if they differ by a constant. Furthermore, ${\mathcal A}$ is taken as
\[
   {\mathcal A}  = \begin{bmatrix} \mathcal A_0 & \mathcal B_0 \\-\mathcal B_0^* &0 \end{bmatrix}= \begin{bmatrix} \alpha \Delta & -\nabla\\ \nabla^T & 0 \end{bmatrix}.
\]
Since for $v,w \in {{\mathbb V}}$
\[
  \int_{\Omega} \left(\Delta v\right) w\ d\omega = - \int_{\Omega} \nabla v \cdot \nabla w\ d\omega,
\]
we see that $\mathcal A_0$ is dissipative. Furthermore, it satisfies the G\aa rding inequality, see \cite{EmmM13} for the proof in the more general case of the linearized Navier-Stokes and Oseen equation. Furthermore, ${\mathcal B}_0$ is injective, has closed range and satisfies the condition \eqref{eq:23}, see e.g.\
\cite{BreF12}.
Hence if we choose
\[
 \mathcal  E = \begin{bmatrix} I &  0 \\ 0&  0 \end{bmatrix} \mbox{ and } \mathcal Q= \begin{bmatrix} I &  0 \\ 0&  I  \end{bmatrix},
\]
then it fits the framework of Example \ref{ex:stokes}. Note that ${\mathbb X}_0$ is now the space of divergence free functions.
}%end rm
\end{example}

\section{Conclusion and possible extensions}

Abstract linear dissipative Hamiltonian differential-algebraic equations (DAEs) on Hilbert spaces are studied. A characterization  is given when these are associated with singular and regular operator pairs. It is shown that due to closure relations and structural properties this class of operator equations arises typically when studying classical evolution equations. This is illustrated by several applications.

However, this class does not only arises when the state spaces are Hilbert spaces, and these abstract
DAEs are not restricted to linear systems.  To extend the presented theory for dissipative systems on a Banach space, the article \cite{ScZw14} can serve as a starting point. Among others it is shown there that Example \ref{E3:HZ} can be treated in the context of Banach spaces as well.
\section*{Acknowledgement} We thank two anonymous referees for there many comments, which helped to improve the paper substantially.
%Future work includes the extension of the theory to nonlinear systems.
%,
%will be a greater challenge,
%since for these the existence and uniqueness theory is not as fully developed as for the linear case. %For nonlinear (dissipative) mappings there is a well-developed existence and uniqueness theory and that can be used. However, since for a non-linear mapping ${\mathcal A}$, dissipativity means that Re$(\langle {\mathcal A}(x_1)-  {\mathcal A}(x_2), x_1 -x_2 \rangle) \leq 0$, not every system that will not grow in its norm/energy will be dissipative. This  is in contrast with the linear case. Hence there will be fewer realistic examples fitting this nonlinear theory.
%{I suggest to rephrase the last part on the Conclusion on nonlinear
%extensions because it is not completely clear what is meant there. }

%\bibliographystyle{plain}
%\bibliography{BeaMXZ,ph}

\section{Appendix on dissipative operators}
\label{sec:app}

Dissipative operators are important in this paper, and so we list some of their properties. We begin with its definition.
\begin{definition}
\label{D:A1}
Let {$\mathbb X$} be a (complex) Hilbert space. Then $\mathcal A: D(\mathcal A) \subset {\mathbb X} { \rightarrow} {\mathbb X}$. $\mathcal A$ is {\em dissipative}\/ if
\begin{equation}
\label{eq:A1}
  \mathrm{Re}\langle { \mathcal A} x,x\rangle \leq 0 \quad \mbox{ for all } x \in D(\mathcal A).
\end{equation}
\end{definition}

The following equivalent characterization is very useful. For a proof we refer to e.g.\ Proposition 6.1.5 of \cite{JacZ12}.
\begin{lemma}
\label{L:A.2}
The operator $\mathcal A: D(\mathcal A) \subset {\mathbb X} \mapsto {\mathbb X}$ is dissipative if and only if
\begin{equation}
\label{eq:A2}
  \| (\lambda I - \mathcal A)x\| \geq \lambda\|x\|, \quad \mbox{ for all } x \in D(\mathcal A), \lambda >0.
\end{equation}
\end{lemma}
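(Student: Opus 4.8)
The plan is to prove both implications directly from the single algebraic identity obtained by expanding the squared norm. The starting point is
\[
  \|(\lambda I - \mathcal A)x\|^2 = \lambda^2\|x\|^2 - 2\lambda\,\mathrm{Re}\langle \mathcal A x, x\rangle + \|\mathcal A x\|^2,
\]
valid for every $x \in D(\mathcal A)$ and every $\lambda > 0$. This follows by sesquilinearity together with the relation $\langle x, \mathcal A x\rangle + \langle \mathcal A x, x\rangle = 2\,\mathrm{Re}\langle \mathcal A x, x\rangle$, which holds since $\langle x, \mathcal A x\rangle$ and $\langle \mathcal A x, x\rangle$ are complex conjugates. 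Once this identity is in hand, both directions of the equivalence read off almost mechanically.

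For the forward direction I would assume dissipativity, so that $\mathrm{Re}\langle \mathcal A x, x\rangle \le 0$. Then, because $\lambda > 0$, the middle term $-2\lambda\,\mathrm{Re}\langle \mathcal A x, x\rangle$ is nonnegative, and since $\|\mathcal A x\|^2 \ge 0$ as well, the identity gives $\|(\lambda I - \mathcal A)x\|^2 \ge \lambda^2\|x\|^2$. Taking square roots yields the inequality (\ref{eq:A2}).

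For the converse I would rearrange the same identity. Assuming (\ref{eq:A2}), that is $\|(\lambda I - \mathcal A)x\|^2 \ge \lambda^2\|x\|^2$, the identity forces $0 \le -2\lambda\,\mathrm{Re}\langle \mathcal A x, x\rangle + \|\mathcal A x\|^2$, hence
\[
  \mathrm{Re}\langle \mathcal A x, x\rangle \le \frac{\|\mathcal A x\|^2}{2\lambda}.
\]
The decisive step is then to let $\lambda \to \infty$: for each fixed $x \in D(\mathcal A)$ the quantity $\|\mathcal A x\|^2$ is a constant, so the right-hand side tends to zero, which forces $\mathrm{Re}\langle \mathcal A x, x\rangle \le 0$, i.e.\ dissipativity.

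There is no genuine obstacle in this argument; it is entirely elementary. The only point deserving a moment's attention is that the converse exploits the hypothesis for \emph{all} $\lambda > 0$, and in particular for arbitrarily large $\lambda$ — it is precisely the passage $\lambda \to \infty$ that annihilates the $\|\mathcal A x\|^2$ term. Since the statement quantifies over all positive $\lambda$, this freedom is available, and a single value of $\lambda$ would not suffice.
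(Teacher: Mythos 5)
Your proof is correct and complete. The paper itself does not prove this lemma but instead cites Proposition 6.1.5 of \cite{JacZ12}; your argument is precisely the standard Hilbert-space proof used there — expand $\|(\lambda I - \mathcal A)x\|^2 = \lambda^2\|x\|^2 - 2\lambda\,\mathrm{Re}\langle \mathcal A x, x\rangle + \|\mathcal A x\|^2$, read off the forward direction from the sign of the cross term, and obtain the converse by dividing by $2\lambda$ and letting $\lambda \to \infty$. Your closing remark that the converse genuinely requires arbitrarily large $\lambda$ (a single fixed $\lambda$ would not do) is a correct and worthwhile observation.
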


For complex $s$ with positive real part, it is easy to see that we have to replace (\ref{eq:A2}) by
\[
  \| (s I - \mathcal A)x\| \geq \mathrm{Re}(s) \|x\|.
\]
From this we see immediately that a dissipative $\mathcal A$ will not have eigenvalues in ${\mathbb C}^+$. Furthermore, when $(sI-\mathcal A)$ is surjective, this inequality implies that $(sI-\mathcal A)$ is boundedly invertible. Secondly, (\ref{eq:A2}) implies that $sI-\mathcal A$ is {\em closable}, and thus $\mathcal A$ is. This means that there exists an extension of $\mathcal A$ which we denote by $\overline{\mathcal A}$ such if $x_n \in D(\mathcal A)$ converged to $x$ and $\mathcal Ax_n$ converge to $y$, then $x \in D(\overline{\mathcal A})$ and $\mathcal Ax=y$. Furthermore, this closure is dissipative, see e.g.\ \cite{ABHN11}.

Based on this consider the following two concepts.
\begin{definition}
\label{D:A3}
Let {$\mathbb X$} be a Hilbert space, and $\mathcal A: D(\mathcal A) \subset {\mathbb X} \mapsto {\mathbb X}$ a dissipative operator.
\begin{enumerate}
\item $\mathcal A$ is {\em m-dissipative} if the range of $\lambda I - \mathcal A={\mathbb X}$ for a $\lambda >0$;
\item ${ \mathcal A}$ is {\em maximally dissipative} if there does not exists an extension of $\mathcal A$ which is also dissipative.
\end{enumerate}
\end{definition}
\begin{lemma}
\label{L:A4}
Let {$\mathbb X$} be a Hilbert space, and $\mathcal A: D(\mathcal A) \subset {\mathbb X} \mapsto {\mathbb X}$ a dissipative operator. Then it { is} m-dissipative if and only if it is maximally dissipative.
\end{lemma}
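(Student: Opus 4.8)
The plan is to prove the two implications separately, with the fundamental estimate of Lemma~\ref{L:A.2} doing most of the work.

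First I would show that m-dissipativity implies maximal dissipativity. Assume $\mathrm{range}(\lambda I - \mathcal A) = \mathcal X$ for some $\lambda > 0$. Lemma~\ref{L:A.2} gives $\|(\lambda I - \mathcal A)x\| \geq \lambda\|x\|$ for all $x \in D(\mathcal A)$, so together with surjectivity the operator $\lambda I - \mathcal A$ is boundedly invertible. Now let $\tilde{\mathcal A}$ be any dissipative extension of $\mathcal A$. Being dissipative, $\tilde{\mathcal A}$ satisfies the same estimate, hence $\lambda I - \tilde{\mathcal A}$ is injective. For $x \in D(\tilde{\mathcal A})$, the vector $(\lambda I - \tilde{\mathcal A})x$ lies in $\mathcal X = \mathrm{range}(\lambda I - \mathcal A)$, so there is $y \in D(\mathcal A)$ with $(\lambda I - \mathcal A)y = (\lambda I - \tilde{\mathcal A})x$; since $\tilde{\mathcal A}$ extends $\mathcal A$ this reads $(\lambda I - \tilde{\mathcal A})(x - y) = 0$, whence $x = y \in D(\mathcal A)$ by injectivity. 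Thus $D(\tilde{\mathcal A}) = D(\mathcal A)$, so $\mathcal A$ has no proper dissipative extension.

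For the converse I would assume $\mathcal A$ is maximally dissipative and aim at $\mathrm{range}(I - \mathcal A) = \mathcal X$. First note that $\mathcal A$ is closed: its closure $\overline{\mathcal A}$ exists and is again dissipative, so maximality forces $\overline{\mathcal A} = \mathcal A$. Taking $\lambda = 1$ in Lemma~\ref{L:A.2} and invoking closedness, a standard Cauchy-sequence argument shows that $\mathrm{range}(I - \mathcal A)$ is closed. Suppose, for contradiction, that it is not all of $\mathcal X$; then there exists $z \neq 0$ with $\langle (I - \mathcal A)x, z\rangle = 0$ for all $x \in D(\mathcal A)$, equivalently $\langle \mathcal A x, z\rangle = \langle x, z\rangle$ for all such $x$. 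Note that I deliberately avoid passing to the adjoint, so no density assumption is needed here.

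The key step, which I expect to be the main obstacle, is to build from $z$ a proper dissipative extension and so contradict maximality. First one checks $z \notin D(\mathcal A)$: otherwise the choice $x = z$ would give $\langle \mathcal A z, z\rangle = \|z\|^2 > 0$, violating dissipativity. Hence $\tilde D := D(\mathcal A) \oplus \mathrm{span}\{z\}$ is a genuine enlargement, on which I would define $\tilde{\mathcal A}(x + \alpha z) := \mathcal A x - \alpha z$. Using $\langle \mathcal A x, z\rangle = \langle x, z\rangle$, one finds that the real parts of the two cross terms cancel (they combine to a purely imaginary quantity), leaving $\mathrm{Re}\langle \tilde{\mathcal A}(x+\alpha z), x + \alpha z\rangle = \mathrm{Re}\langle \mathcal A x, x\rangle - |\alpha|^2\|z\|^2 \leq 0$, so $\tilde{\mathcal A}$ is a proper dissipative extension, contradicting maximality. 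Therefore $\mathrm{range}(I - \mathcal A) = \mathcal X$ and $\mathcal A$ is m-dissipative. The delicate point is precisely the sign in the prescription $\tilde{\mathcal A}z = -z$ (rather than $+z$), which is exactly what forces the cross terms to be purely imaginary and the surviving term $-|\alpha|^2\|z\|^2$ to be nonpositive.
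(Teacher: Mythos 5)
Your proof is correct, but it takes a genuinely different route from the paper, because the paper does not prove Lemma~\ref{L:A4} at all: it simply cites Corollary 2.27 of \cite{Miya92}, where the result is established in the far more general setting of (nonlinear) dissipative operators on Banach spaces. Your argument is a self-contained, Hilbert-space-specific proof, and both directions check out. The implication m-dissipative $\Rightarrow$ maximally dissipative is the standard bootstrap from Lemma~\ref{L:A.2}: any dissipative extension $\tilde{\mathcal A}$ inherits the injectivity of $\lambda I - \tilde{\mathcal A}$, and surjectivity of $\lambda I - \mathcal A$ then forces $D(\tilde{\mathcal A}) = D(\mathcal A)$. In the converse, your three steps are all sound: maximality forces $\mathcal A = \overline{\mathcal A}$ (dissipative operators are closable, as recalled in the appendix), closedness plus $\|(I-\mathcal A)x\| \geq \|x\|$ gives closed range, and if $0 \neq z$ is orthogonal to the range then $\langle \mathcal A x, z\rangle = \langle x, z\rangle$ on $D(\mathcal A)$, so $z \notin D(\mathcal A)$ (otherwise $\mathrm{Re}\langle \mathcal A z, z\rangle = \|z\|^2 > 0$), and the extension $\tilde{\mathcal A}(x+\alpha z) = \mathcal A x - \alpha z$ is dissipative because the cross terms $\bar{\alpha}\langle \mathcal A x, z\rangle - \alpha\langle z, x\rangle = \bar{\alpha}\langle x, z\rangle - \alpha\overline{\langle x, z\rangle}$ are purely imaginary, contradicting maximality; the sign choice $\tilde{\mathcal A}z = -z$ is indeed the crux, and is exactly the classical Phillips-type extension trick. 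Your remark about avoiding the adjoint is also apt: Definition~\ref{D:A3} does not assume $D(\mathcal A)$ dense, so an adjoint-based duality argument would require justification that your direct construction sidesteps. What each approach buys: yours makes the appendix self-contained at the cost of about a page; the paper's citation outsources the work and covers the nonlinear, Banach-space generality that the authors point to in their conclusion as a possible extension of their theory.
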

For the proof we refer to Corollary 2.27 of \cite{Miya92}. Using this lemma we do not distinguish the two concepts, and we have chosen to use the term maximally dissipative when 1.\ or 2.\ holds, see Definition \ref{D:A3}.

The importance of dissipative operators is clear from the {\em Lumer-Phillips Theorem}.
\begin{theorem}
\label{T:A5}
Let {$\mathbb X$} be a Hilbert space, and $\mathcal A: D(\mathcal A) \subset {\mathbb X} \mapsto {\mathbb X}$ a linear operator. Then the following are equivalent:
\begin{enumerate}
\item $\mathcal A$ is maximally dissipative;
\item $\mathcal A$ is the infinitesimal generator of a contraction semigroup on {$\mathbb X$};
\item $\mathcal A$ is closed and densely defined, and $\mathcal A$ and $\mathcal A^*$ are dissipative.
\end{enumerate}
\end{theorem}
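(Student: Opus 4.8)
The plan is to prove the three statements equivalent in the cyclic order $(1)\Rightarrow(2)\Rightarrow(3)\Rightarrow(1)$. Throughout I would invoke Lemma~\ref{L:A4} to identify maximal dissipativity with m-dissipativity, i.e.\ with surjectivity of $\lambda I-\mathcal A$ for some $\lambda_0>0$, and Lemma~\ref{L:A.2} to supply the basic estimate $\|(\lambda I-\mathcal A)x\|\ge\lambda\|x\|$ for dissipative $\mathcal A$. These two facts are the workhorses of every step.

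For $(1)\Rightarrow(2)$ I would start from m-dissipativity. Since $\mathcal A$ is dissipative, the estimate of Lemma~\ref{L:A.2} shows that surjectivity of $\lambda_0 I-\mathcal A$ upgrades to bounded invertibility, so in particular $\mathcal A$ is closed. Density of $D(\mathcal A)$ is then immediate: if $z\perp D(\mathcal A)$, pick $w\in D(\mathcal A)$ with $(\lambda_0 I-\mathcal A)w=z$, so that $0=\langle z,w\rangle=\lambda_0\|w\|^2-\langle\mathcal A w,w\rangle$, and taking real parts with dissipativity forces $w=0$, hence $z=0$. Next I would propagate bounded invertibility to the whole positive axis: the set $S=\{\lambda>0:\lambda\in\rho(\mathcal A)\}$ is open (resolvent sets are open), nonempty (it contains $\lambda_0$), and closed in $(0,\infty)$ because $\mathrm{dist}(\lambda,\sigma(\mathcal A))\ge\|(\lambda I-\mathcal A)^{-1}\|^{-1}\ge\lambda$ keeps the spectrum away as $\lambda$ approaches any interior limit; connectedness of $(0,\infty)$ then gives $S=(0,\infty)$ with the uniform bound $\|(\lambda I-\mathcal A)^{-1}\|\le1/\lambda$. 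With $\mathcal A$ densely defined and this bound, the Hille--Yosida generation theorem (see e.g.\ \cite{ABHN11}) yields a contraction semigroup.

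The direction $(2)\Rightarrow(3)$ is soft: a semigroup generator is automatically closed and densely defined, and dissipativity of $\mathcal A$ follows by differentiating the contraction estimate, since for $x\in D(\mathcal A)$ one has $\mathrm{Re}\langle\mathcal A x,x\rangle=\lim_{t\downarrow0}t^{-1}(\mathrm{Re}\langle T(t)x,x\rangle-\|x\|^2)\le\lim_{t\downarrow0}t^{-1}(\|T(t)x\|\,\|x\|-\|x\|^2)\le0$. For $\mathcal A^*$ I would use the Hilbert-space fact that the adjoint semigroup $T(t)^*$ is again a strongly continuous contraction semigroup with generator $\mathcal A^*$ (see \cite{CuZw95}), and apply the same difference-quotient argument to it. Then $(3)\Rightarrow(1)$ is a range argument: by Lemma~\ref{L:A.2}, $\|(I-\mathcal A)x\|\ge\|x\|$, so $I-\mathcal A$ is injective with closed range (closedness of $\mathcal A$ turns a convergent image sequence into a convergent preimage sequence). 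If $z\perp\mathrm{ran}(I-\mathcal A)$, then $\langle\mathcal A x,z\rangle=\langle x,z\rangle$ for all $x\in D(\mathcal A)$, so $z\in D(\mathcal A^*)$ with $\mathcal A^*z=z$, whence $\|z\|^2=\mathrm{Re}\langle\mathcal A^*z,z\rangle\le0$ by dissipativity of $\mathcal A^*$, forcing $z=0$; thus $\mathrm{ran}(I-\mathcal A)=\mathcal X$ and $\mathcal A$ is m-dissipative, i.e.\ maximally dissipative by Lemma~\ref{L:A4}.

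The step I expect to be the main obstacle is $(1)\Rightarrow(2)$, specifically the passage from surjectivity at a single $\lambda_0$ to bounded invertibility on all of $(0,\infty)$ with the uniform resolvent bound, which is exactly what Hille--Yosida requires; the connectedness/spectral-distance argument is the delicate part, whereas density of the domain and the two soft implications are short once Lemma~\ref{L:A.2} and the adjoint-semigroup facts are in hand.
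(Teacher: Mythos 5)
Your proof is correct, but it is genuinely different in character from what the paper does: the paper does not prove Theorem~\ref{T:A5} at all, it simply cites the literature --- Theorem 6.1.7 of \cite{JacZ12} for the equivalence $(1)\Leftrightarrow(2)$ and Corollary 2.3.3 of \cite{CuZw20} for $(2)\Leftrightarrow(3)$ --- whereas you reconstruct the whole Lumer--Phillips theorem from scratch in the cyclic order $(1)\Rightarrow(2)\Rightarrow(3)\Rightarrow(1)$. Each step of yours checks out: in $(1)\Rightarrow(2)$ the upgrade from surjectivity at one $\lambda_0$ to bounded invertibility (hence closedness of $\mathcal A$), the orthogonality argument for density of $D(\mathcal A)$, and the open-closed connectedness argument on $(0,\infty)$ using $\mathrm{dist}(\lambda,\sigma(\mathcal A))\ge\|(\lambda I-\mathcal A)^{-1}\|^{-1}\ge\lambda$ are exactly the standard route into Hille--Yosida; in $(2)\Rightarrow(3)$ the difference-quotient estimate and the Hilbert-space fact that $T(t)^*$ is a contraction semigroup generated by $\mathcal A^*$ are correctly invoked (this fact is specific to Hilbert/reflexive spaces, which is the setting here, so no gap); and in $(3)\Rightarrow(1)$ the closed-range-plus-dense-range argument, with $z\perp\mathrm{ran}(I-\mathcal A)$ forcing $\mathcal A^*z=z$ and then $z=0$ by dissipativity of $\mathcal A^*$, is the classical way closedness of $\mathcal A$ and dissipativity of $\mathcal A^*$ combine to give m-dissipativity. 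The trade-off is the expected one: the paper's citation keeps its appendix short and leans on textbook results, while your version makes the appendix self-contained and makes visible that the only external inputs truly needed are the Hille--Yosida theorem and the adjoint-semigroup fact, with Lemmas~\ref{L:A.2} and~\ref{L:A4} doing the rest --- consistent with how the paper itself uses those two lemmas elsewhere.
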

For the proof of (1) $\Leftrightarrow$ (2) we refer to \cite[Theorem 6.1.7]{JacZ12}, and for (2) $\Leftrightarrow$ (3) to \cite[Corollary 2.3.3]{CuZw20}.

We end this appendix with a lemma.
\begin{lemma}
\label{L:A6}
  If $\mathcal A: D(\mathcal A) \subset {\mathbb X} \mapsto {\mathbb X}$ a dissipative operator which is boundedly invertible, then it is maximally dissipative.
\end{lemma}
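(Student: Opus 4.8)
The plan is to establish that $\mathcal A$ is \emph{m-dissipative}, which by Lemma~\ref{L:A4} is equivalent to maximal dissipativity. Since $\mathcal A$ is assumed dissipative, it suffices by Definition~\ref{D:A3} to exhibit a single $\lambda > 0$ for which the range of $\lambda I - \mathcal A$ equals $\mathcal X$, i.e.\ for which $\lambda I - \mathcal A$ is surjective. Thus dissipativity enters only at the very end, to upgrade m-dissipativity to maximal dissipativity; the surjectivity itself will come purely from the bounded invertibility of $\mathcal A$.

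The key idea is a Neumann-series factorization. Writing $\mathcal A^{-1} \in \mathcal{L}(\mathcal X)$ for the bounded inverse, on $D(\mathcal A)$ one has the identity
\[
  \lambda I - \mathcal A = -\,(I - \lambda \mathcal A^{-1})\,\mathcal A ,
\]
valid because for $x \in D(\mathcal A)$ one computes $(I - \lambda \mathcal A^{-1})\mathcal A x = \mathcal A x - \lambda x$. I would then choose $\lambda$ in the interval $0 < \lambda < \|\mathcal A^{-1}\|^{-1}$, so that $\|\lambda \mathcal A^{-1}\| < 1$ and the bounded operator $I - \lambda \mathcal A^{-1}$ is boundedly invertible on $\mathcal X$ by the Neumann series; in particular it is surjective.

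Surjectivity of $\lambda I - \mathcal A$ then follows by composing two surjections. Given $y \in \mathcal X$, first use surjectivity of $I - \lambda \mathcal A^{-1}$ to find $z \in \mathcal X$ with $(I - \lambda \mathcal A^{-1})z = -y$, and then use surjectivity of $\mathcal A : D(\mathcal A) \to \mathcal X$ to find $x \in D(\mathcal A)$ with $\mathcal A x = z$. The factorization gives $(\lambda I - \mathcal A)x = -(I - \lambda \mathcal A^{-1})\mathcal A x = -(I - \lambda \mathcal A^{-1})z = y$, so the range of $\lambda I - \mathcal A$ is all of $\mathcal X$. Hence $\mathcal A$ is m-dissipative, and Lemma~\ref{L:A4} yields maximal dissipativity.

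I expect the only delicate point to be the bookkeeping in the factorization: one must order the factors as $(I - \lambda \mathcal A^{-1})\mathcal A$ rather than $\mathcal A(I - \lambda \mathcal A^{-1})$, so that the bounded operator $I - \lambda \mathcal A^{-1}$ acts on the full space $\mathcal X$ \emph{after} $\mathcal A$ has mapped $D(\mathcal A)$ onto $\mathcal X$; this avoids any question of whether intermediate vectors lie in $D(\mathcal A)$. Everything else — the choice of a positive $\lambda$ small enough for Neumann convergence, and the invocation of the equivalence in Lemma~\ref{L:A4} — is routine. (An equivalent, even shorter, route would observe that bounded invertibility forces $0$ to lie in the resolvent set of the closed operator $\mathcal A$, which is open, so small $\lambda>0$ also lies in the resolvent set; but the explicit factorization is more self-contained.)
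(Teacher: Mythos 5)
Your proof is correct. The paper's own argument is the one you mention in your final parenthetical: since $\mathcal A$ is boundedly invertible, $0$ lies in its resolvent set, and since the resolvent set is open, some $\lambda>0$ also lies in it, so $\lambda I-\mathcal A$ is boundedly invertible and in particular surjective; m-dissipativity and Lemma~\ref{L:A4} then finish, exactly as in your last step. Your main route replaces the appeal to the abstract fact ``the resolvent set is open'' by the explicit Neumann-series factorization $\lambda I-\mathcal A=-(I-\lambda\mathcal A^{-1})\mathcal A$ with $0<\lambda<\|\mathcal A^{-1}\|^{-1}$, which is of course the standard mechanism behind that openness; so the two proofs share the same engine, but yours is more self-contained. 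It also has two small advantages: it never needs to discuss closedness of $\mathcal A$ (which is implicitly required to even speak of its resolvent set as an open subset of $\mathbb C$, though here it is automatic from $0\in\rho(\mathcal A)$), and it establishes only surjectivity of $\lambda I-\mathcal A$, which is precisely what Definition~\ref{D:A3} asks for — bounded invertibility of $\lambda I-\mathcal A$ then comes for free from dissipativity via Lemma~\ref{L:A.2}, rather than being smuggled in through the resolvent-set language. Your care with the order of the factors, keeping $I-\lambda\mathcal A^{-1}$ on the left so that no intermediate vector needs to lie in $D(\mathcal A)$, is exactly the right bookkeeping.
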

\proof
The proof follows from the fact that the resolvent set of an operator is always open. Thus there exists a $\lambda >0$ such that $\lambda I - \mathcal A$ is boundedly invertible, and in particular its range equals {$\mathbb X$}.
\hfill\eproof

\end{document}